\numberwithin{equation}{section}
\newtheorem{theorem}{Theorem}[section]
\newtheorem{lemma}[theorem]{Lemma}
\newtheorem{proposition}[theorem]{Proposition}
\newtheorem{claim}[theorem]{Claim}
\newtheorem{corollary}[theorem]{Corollary}
\theoremstyle{definition}
\newtheorem{definition}{Definition}[section]
\newtheorem{remark}{Remark}[section]
\begin{document}

\begin{frontmatter}
\title{Delay, memory, and messaging tradeoffs in distributed service systems\thanksref{T1}\thanksref{T2}}
\thankstext{T1}{Research supported in part by an MIT Jacobs Presidential Fellowship and the NSF grant CMMI-1234062.}
\thankstext{T2}{A preliminary version of this paper appeared in the Proceedings of the 2016 ACM Sigmetrics Conference \cite{sigmetrics16}.}

\begin{aug}
\author{\fnms{David} \snm{Gamarnik}\ead[label=e1]{gamarnik@mit.edu}},
\author{\fnms{John N.} \snm{Tsitsiklis}\ead[label=e2]{jnt@mit.edu}}
\and
\author{\fnms{Martin} \snm{Zubeldia}\ead[label=e3]{zubeldia@mit.edu}}


\affiliation{Massachusetts Institute of Technology}

\address{David Gamarnik\\
Sloan School of Management\\
Massachusetts Institute of Technology\\
Cambridge, MA, 02139, USA.\\
\printead{e1}}

\address{John N. Tsitsiklis\\
Martin Zubeldia\\
Laboratory for Information and Decision Systems\\
Massachusetts Institute of Technology\\
Cambridge, MA, 02139, USA.\\
\printead{e2}\\
\phantom{E-mail:\ }\printead*{e3}}

\end{aug}


\begin{abstract}
We consider the following distributed service model: jobs with unit mean, exponentially distributed, and independent processing times arrive as a Poisson process of rate $\lambda n$, with $0<\lambda<1$, and are immediately dispatched by a centralized dispatcher to one of $n$  First-In-First-Out queues associated with $n$ identical servers. The dispatcher is endowed with a finite memory, and with the ability to exchange messages with the servers.

We propose and study a resource-constrained ``pull-based" dispatching policy that involves two parameters: (i) the number of memory bits available at the dispatcher, and (ii) the average rate at which  servers communicate with the dispatcher. We establish (using a fluid limit approach) that the asymptotic, as $n\to\infty$, expected queueing delay is zero when either (i) the number of memory bits grows logarithmically with $n$ and the message rate grows superlinearly with $n$, or (ii) the number of memory bits grows superlogarithmically with $n$ and the message rate is at least $\lambda n$. Furthermore, when the number of memory bits grows only logarithmically with $n$ and the message rate is proportional to $n$, we obtain a closed-form expression for the (now positive) asymptotic delay.

Finally, we demonstrate an interesting phase transition in the resource-constrained regime where the asymptotic delay is non-zero. In particular, we show that for any given $\alpha>0$ (no matter how small), if our policy only uses a linear message rate $\alpha n$, the resulting asymptotic delay is upper bounded, uniformly over all $\lambda<1$; this is in sharp contrast to the delay obtained when no messages are used ($\alpha = 0$), which grows as $1/(1-\lambda)$ when $\lambda\uparrow 1$, or when the popular power-of-$d$-choices is used, in which the delay grows as $\log(1/(1-\lambda))$.

\end{abstract}

%

\end{frontmatter}

\setcounter{tocdepth}{2}
\tableofcontents

\section{Introduction}
This paper addresses the tradeoffs between performance (delay) and resources (local memory and communication overhead) in large-scale queueing systems. More specifically, we study such tradeoffs in the context of the supermarket model \cite{mitzenmacher}, which describes a system in which incoming jobs are to be dispatched to one of several queues associated with different servers (see Figure \ref{fig:basicSetting}).

 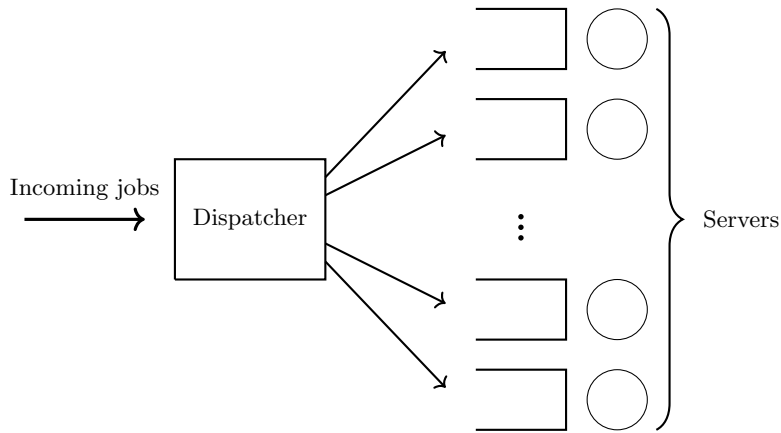
\begin{figure}[ht!]
 \centering
 \begin{tikzpicture}[scale=0.8]
    \draw [very thick,->] (-0.5,0) -> (1.5,0);
    \draw (0.5,0.5) node {Incoming jobs};
     \draw [thick] (2,-1) -- (2,1) -- (4.5,1) -- (4.5,-1) -- (2,-1);
     \draw (3.25,0) node {Dispatcher};
     \draw [thick, ->] (4.5,0.7) -- (6.5,2.8);
     \draw [thick, ->] (4.5,-0.7) -- (6.5,-2.8);
     \draw [thick, ->] (4.5,0.4) -- (6.5,1.4);
     \draw [thick, ->] (4.5,-0.4) -- (6.5,-1.4);
     \begin{scope}[yshift=3cm,xshift=5cm]
          \draw [thick] (2,0.5) -- (3.5,0.5) -- (3.5,-0.5) -- (2,-0.5);
          \draw (4.35,0) circle (5mm);
     \end{scope}
     \begin{scope}[yshift=1.5cm,xshift=5cm]
          \draw [thick] (2,0.5) -- (3.5,0.5) -- (3.5,-0.5) -- (2,-0.5);
          \draw (4.35,0) circle (5mm);
     \end{scope}
     \begin{scope}[xshift=5cm]
          \draw (2.75,0) node {\Huge\vdots};
     \end{scope}
     \begin{scope}[yshift=-3cm,xshift=5cm]
          \draw [thick] (2,0.5) -- (3.5,0.5) -- (3.5,-0.5) -- (2,-0.5);
          \draw (4.35,0) circle (5mm);
     \end{scope}
     \begin{scope}[yshift=-1.5cm,xshift=5cm]
          \draw [thick] (2,0.5) -- (3.5,0.5) -- (3.5,-0.5) -- (2,-0.5);
          \draw (4.35,0) circle (5mm);
     \end{scope}
          \draw [thick,decorate,decoration={brace,amplitude=10pt}]   (10,3.5) -- (10,-3.5) node [midway,right,xshift=.5cm] {Servers};
  \end{tikzpicture}
  \caption{The basic setting.}
    \label{fig:basicSetting}
  \end{figure}

There is a variety of ways that the system in the supermarket model can be operated, which correspond to different decision making architectures and policies, with different delay performance. At one extreme, incoming jobs can be sent to a random queue. This policy has no informational requirements but incurs a substantial delay because it does not take advantage of resource pooling. At the other extreme, incoming jobs can be sent to a shortest queue, or to a server with the smallest workload. The latter policies have very good performance (small queueing delay), but rely on substantial information exchange.

Many intermediate policies have been explored in the literature, and they achieve different performance levels while using varying amounts of resources, including local memory and communication overhead. For example, the power-of-$d$-choices \cite{mitzenmacher,vvedenskaya} and its variations \cite{shahFiniteLevels,srikant15,BorstPowerOfd} have been extensively studied, including the case of non-exponential service time distributions \cite{bramson13,kavita17}. More recently, pull-based policies like Join-Idle-Queue \cite{badonnelBurgess,joinIdleQueue} have been getting more attention, including extensions for heterogeneous servers \cite{stolyar14}, multiple dispatchers \cite{allerton16,infocom17,stolyar1}, and general service time distributions \cite{stolyar2}.

\subsection{Our contribution}
Our purpose is to study the effect of different resource levels (local memory and communication overhead), and to understand the amount of resources required for the asymptotic (as $n\to\infty$) delay to become negligible, in the context of the supermarket model. We adopt the average rate at which messages are exchanged between the dispatcher and the servers as our measure of the communication overhead, because of its simplicity and the fact that it applies to any kind of policy. We accomplish our purpose in two steps.
\begin{itemize}
\item[a)] In this paper, we propose a pull-based dispatching policy parameterized by the amount of resources involved, namely, the size of the memory used by the dispatcher and the average message rate.  We carry out a thorough analysis in different regimes and show that we obtain vanishing asymptotic delay if and only if the resources are above a certain level.
\item[b)] In a companion paper (see also \cite{sigmetrics16}), we show that in the regime (i.e., level of resources) where
our policy fails to result in vanishing asymptotic delay, the same is true for every other policy within a broad class of ``symmetric'' policies that  treat all servers in the same manner.
\end{itemize}

More concretely, our development relies on  a fluid limit approach. As is common with fluid-based analyses, we obtain two types of results: (i) qualitative results obtained through a deterministic analysis of a fluid model, and (ii) technical  results on the convergence of the actual  stochastic system to its fluid counterpart.

On the qualitative end, we establish the following:
 \begin{itemize}
\item [a)] If the message rate is superlinear in $n$ and the number of memory bits is at least logarithmic in $n$, then the asymptotic delay is zero.
\item [b)] If the message rate is at least $\lambda n$ and the number of memory bits is superlogarithmic in $n$, then the asymptotic delay is zero.
\item [c)] If the message rate is $\alpha n$ and the number of memory bits is $c\log_2(n)$, we derive a closed form expression for the (now positive) asymptotic delay, in terms of $\lambda$, $\alpha$, and $c$.
\item [d)] For the same amount of resources as in (c), we show an interesting phase transition in the asymptotic delay as the load approaches capacity ($\lambda\uparrow 1$). As long as a nontrivial linear message rate $\alpha n$, with $\alpha>0$,  is used, the asymptotic delay is uniformly upper bounded, over all $\lambda<1$. This is in sharp contrast to the delay obtained if no messages are used ($\alpha = 0$), which grows as $1/(1-\lambda)$ when $\lambda\uparrow 1$. This suggests that for large systems, even a small linear message rate provides significant improvements in the system's delay performance when $\lambda\uparrow 1$.
\item [e)] Again for the same amount of resources as in (c), we show a phase transition in the scaling of the asymptotic delay as a function of the memory parameter $c$, as we vary the message rate parameter $\alpha$.
     \begin{itemize}
     \item[(i)] If $\alpha<\lambda$, then the asymptotic delay is uniformly bounded away from zero, for any $c\geq 0$.
     \item[(ii)] If $\alpha=\lambda$, then the asymptotic delay decreases as $1/c$,  when $c\to\infty$.
     \item[(iii)] If $\alpha>\lambda$, then the queueing delay decreases as $(\lambda/\alpha)^c$, when $c\to\infty$.
     \end{itemize}
     This suggests that a message rate of at least $\lambda n$ is required for the memory to have a significant impact on the asymptotic delay.
\end{itemize}
On the technical end, and for each one of three regimes corresponding to cases (a), (b), and (c) above, we show the following:
\begin{itemize}
    \item [a)] The queue length process converges (as $n\to\infty$, and over any finite time interval) almost surely to the unique solution to a certain fluid model.
    \item [b)] For any initial conditions that correspond to starting with a finite average number of jobs per queue, the fluid solution converges (as time tends to $\infty$) to a unique invariant state.
    \item [c)] The steady-state distribution of the finite system converges (as $n\to\infty$) to the invariant state of the fluid model.
\end{itemize}

\subsection{Outline of the paper}
The rest of the paper is organized as follows. In Section \ref{sec:notation} we introduce some notation. In Section \ref{sec:results} we present the model and the main results, and also compare a particular regime of our policy to the so-called ``power-of-$d$-choices" policy. In Sections \ref{sec:proof_equilibrium}-\ref{sec:proof_inter} we provide the proofs of the main results. Finally, in Section \ref{sec:conclusions} we present our conclusions and suggestions for future work.

\section{Notation}\label{sec:notation}
In this section we introduce some notation that will be used throughout the paper. First, we define the notation for the asymptotic behavior of positive functions. In particular,
\begin{align*}
  f(n)\in o(g(n)) & \,\,\Leftrightarrow\,\, \limsup\limits_{n\to \infty} \frac{f(n)}{g(n)} = 0, \\
  f(n)\in O(g(n)) &\,\,\Leftrightarrow\,\, \limsup\limits_{n\to \infty} \frac{f(n)}{g(n)} < \infty, \\
  f(n)\in \Theta(g(n)) &\,\,\Leftrightarrow\,\, 0 < \liminf\limits_{n\to \infty} \frac{f(n)}{g(n)} \leq \limsup\limits_{n\to \infty} \frac{f(n)}{g(n)} < \infty, \\
  f(n)\in \Omega(g(n)) &\,\,\Leftrightarrow\,\, \liminf\limits_{n\to \infty} \frac{f(n)}{g(n)} > 0, \\
  f(n)\in \omega(g(n)) &\,\,\Leftrightarrow\,\, \liminf\limits_{n\to \infty} \frac{f(n)}{g(n)} = \infty.
\end{align*}
We let $[\,\cdot\,]^+\triangleq \max\{\,\cdot\,,0\}$, and denote by $\mathbb{Z}_+$ and $\mathbb{R}_+$ the sets of non-negative integers and real numbers, respectively. The indicator function is denoted by $\mathds{1}$, so that $\mathds{1}_A(x)$ is $1$ if $x\in A$, and is $0$ otherwise. The Dirac measure $\delta$ concentrated at a point $x$ is defined by $\delta_x(A)\triangleq\mathds{1}_A(x)$. We also define the following sets:
\begin{equation*}\label{eq:S}
 \mathcal{S}\triangleq\left\{s\in[0,1]^{\mathbb{Z_+}}: s_0=1; \,\, s_i \geq s_{i+1}, \,\, \forall \, i\geq 0 \right\},
\end{equation*}
\begin{equation}\label{eq:S1}
 \mathcal{S}^1\triangleq\left\{s\in\mathcal{S}: \sum\limits_{i=0}^\infty s_i < \infty  \right\},
\end{equation}
\[\mathcal{I}_n\triangleq\left\{ x\in[0,1]^{\mathbb{Z}_+}: x_i=\frac{k_i}{n}, \text{ for some } k_i\in\mathbb{Z}_+, \,\,\forall \, i \right\}. \]
We define the weighted $\ell_2$ norm $||\cdot||_w$ on $\mathbb{R}^{\mathbb{Z}_+}$ by
\[ ||x-y||_w^2\triangleq\sum\limits_{i=0}^{\infty} \frac{|x_i-y_i|^2}{2^i}. \]
Note that this norm comes from an inner product, so $(\ell^2_w,\|\cdot\|_w)$ is actually a Hilbert space, where
\[ \ell^2_w\triangleq \left\{ s\in\mathbb{R}^\mathbb{Z_+} : \|s\|_w<\infty \right\}. \]
We also define a partial order on $\mathcal{S}$ as follows:
\begin{align*}
  x \geq y \quad &\Leftrightarrow \quad x_i\geq y_i, \quad \forall \, i\geq 1, \\
  x > y \quad &\Leftrightarrow \quad x_i > y_i, \quad \forall \, i\geq 1.
\end{align*}
We will work with the Skorokhod spaces of functions
\[ D[0,T]\triangleq\left\{f:[0,T]\to\mathbb{R} : f \text{ is right-continuous with left limits} \right\}, \]
endowed with the uniform metric
\[ d(x,y)\triangleq\sup\limits_{t\in[0,T]} |x(t)-y(t)|, \]
and
\[ D^\infty[0,T]\triangleq \left\{f:[0,T]\to\mathbb{R}^{\mathbb{Z}_+} : f \text{ is right-continuous with left limits}\right\}, \]
with the metric
\[ d^{\mathbb{Z}_+}(x,y)\triangleq\sup\limits_{t\in[0,T]} ||x(t)-y(t)||_w. \]

\section{Model and main results}\label{sec:results}
In this section we present our main results. In Section \ref{sec:model_assumption} we describe the model and our assumptions. In Section \ref{sec:policyProperties} we introduce three different regimes of a certain pull-based dispatching policy. In Sections \ref{section:s+f} and \ref{section:tech} we introduce a fluid model and state the validity of fluid approximations for the transient and the steady-state regimes, respectively. In Section \ref{sec:comparison}, we discuss the asymptotic delay, and show a phase transition in its behavior when $\lambda\uparrow 1$.

\subsection{Modeling assumptions}\label{sec:model_assumption}
We consider a system consisting of $n$ parallel servers, where each server has a processing rate equal to $1$. Furthermore, each server is associated with an infinite capacity FIFO queue. We use the convention that a job that is being served remains in queue until its processing is completed. We assume that each server is work conserving: a server is idle if and only if the corresponding queue is empty.

Jobs arrive to the system as a single Poisson process of rate $\lambda n$ (for some fixed $\lambda<1$). Job sizes are i.i.d., independent from the arrival process, and exponentially distributed with mean $1$.

There is a central controller (dispatcher), responsible for routing each incoming job to a queue, immediately upon arrival. The dispatcher makes decisions based on  limited information about the state of the queues, as conveyed through messages from idle servers to the dispatcher, and which is stored in a limited local memory. See the next subsection for the precise description of the policy.

We will focus on the steady-state expectation of the time between the arrival of a typical job and the time at which it starts receiving service (to be referred to as ``{\bf queueing delay}'' or just ``{\bf delay}'' for short) and its limit as the system size $n$ tends to infinity (to be referred to as ``{\bf asymptotic delay}'').
Furthermore, we are interested in the amount of resources (memory size and message rate) required for the asymptotic delay to be equal to zero.

\subsection{Policy description and high-level overview of  the results} \label{sec:policyProperties}
In this section we introduce our policy and state in a succinct form our results for three of its regimes.

\subsubsection{Policy description}\label{sec:policy}
For any fixed value of $n$, the policy that we study operates as follows.
\begin{itemize}
\item [a)] {\bf Memory:} The dispatcher maintains a virtual queue comprised of up to $c(n)$ server identity numbers (IDs), also referred to as {\bf tokens}, so that the dispatcher's memory size is of order $c(n) \log_2(n)$ bits. Since there are only $n$ distinct servers, we will assume throughout the rest of the paper that $c(n)\leq n$.
\item [b)] {\bf Spontaneous messages from idle servers:} While a server is idle, it sends messages to the dispatcher as a Poisson process of rate $\mu(n)$, to inform or remind the dispatcher of its idleness. We assume that $\mu(n)$ is a nondecreasing function of $n$. Whenever the dispatcher receives a message, it adds the ID of the server that sent the message to the virtual queue of tokens, unless this ID is already stored or the virtual queue is full, in which cases the new message is discarded.
\item [c)] {\bf Dispatching rule:} Whenever a new job arrives, if there is at least one server ID in the virtual queue, the job is sent to the queue of a server whose ID is chosen uniformly at random from the virtual queue, and the corresponding token is deleted. If there are no tokens present, the job is sent to a queue chosen uniformly at random.
\end{itemize}
Note that under the above described policy, which is also depicted in Figure \ref{fig:policy},
no messages are ever sent from the dispatcher to the servers. Accordingly, following the terminology of \cite{badonnelBurgess}, we will refer to it as the Resource Constrained Pull-Based ({\bf RCPB}) policy or {\bf Pull-Based} policy for short.

\begin{figure}[ht!]
\begin{center}
 \begin{tikzpicture}
    \draw [very thick,->] (0.25,0) -> (1.25,0);
    \draw (0.75,0.5) node {$n\lambda$};
     \draw [thick] (1.5,-1) -- (1.5,1) -- (4.5,1) -- (4.5,-1) -- (1.5,-1);
     \draw (3,0) node {Dispatcher};
     \filldraw [thick,color=black!20] (3.5,2) -- (3.5,1.5) -- (2.5,1.5) -- (2.5,2) -- (3.5,2);
     \draw [thick] (3.5,2) -- (3.5,1.5) -- (2.5,1.5) -- (2.5,2) -- (3.5,2);
     \filldraw [thick,color=black!20] (3.5,2) -- (3.5,2.5) -- (2.5,2.5) -- (2.5,2) -- (3.5,2);
     \draw [thick] (3.5,2) -- (3.5,2.5) -- (2.5,2.5) -- (2.5,2) -- (3.5,2);
     \draw [thick] (3.5,3) -- (3.5,1.5) -- (2.5,1.5) -- (2.5,3);
     \draw [thick] (4,3) -- (2,3);
     \draw (4.5,3) node {$c(n)$};
     \draw (3,3.5) node {Queue of IDs};
     \draw [thick, ->] (4.5,0.7) -- (7.5,2.8);
     \draw (6.5,1.5) node {Jobs to};
     \draw (6.5,1) node {empty queues};
     \draw [thick, <-,dotted] (4.8,-0.7) -- (8,-3);
     \draw (5.5,-2.5) node {Messages from};
     \draw (5.5,-3) node {idle servers};
     \begin{scope}[yshift=3cm,xshift=6cm]
          \draw [thick] (2,0.5) -- (3.5,0.5) -- (3.5,-0.5) -- (2,-0.5);
          \draw (4.35,0) circle (5mm);
     \end{scope}
     \begin{scope}[yshift=1.5cm,xshift=6cm]
          \filldraw [thick,color=black!20] (3,0.5) -- (3.5,0.5) -- (3.5,-0.5) -- (3,-0.5) -- (3,0.5);
          \draw [thick] (3,0.5) -- (3.5,0.5) -- (3.5,-0.5) -- (3,-0.5) -- (3,0.5);
          \filldraw [thick,color=black!20] (3,0.5) -- (2.5,0.5) -- (2.5,-0.5) -- (3,-0.5) -- (3,0.5);
          \draw [thick] (3,0.5) -- (2.5,0.5) -- (2.5,-0.5) -- (3,-0.5) -- (3,0.5);
          \draw [thick] (2,0.5) -- (3.5,0.5) -- (3.5,-0.5) -- (2,-0.5);
          \draw (4.35,0) circle (5mm);
     \end{scope}
     \begin{scope}[xshift=6cm]
          \draw (2.75,0) node {\Huge\vdots};
     \end{scope}
     \begin{scope}[yshift=-3cm,xshift=6cm]
          \draw [thick] (2,0.5) -- (3.5,0.5) -- (3.5,-0.5) -- (2,-0.5);
          \draw (4.35,0) circle (5mm);
     \end{scope}
     \begin{scope}[yshift=-1.5cm,xshift=6cm]
          \draw [thick] (2,0.5) -- (3.5,0.5) -- (3.5,-0.5) -- (2,-0.5);
          \filldraw [thick,color=black!20] (3,0.5) -- (3.5,0.5) -- (3.5,-0.5) -- (3,-0.5) -- (3,0.5);
          \draw [thick] (3,0.5) -- (3.5,0.5) -- (3.5,-0.5) -- (3,-0.5) -- (3,0.5);
          \draw (4.35,0) circle (5mm);
     \end{scope}
          \draw [thick,decorate,decoration={brace,amplitude=10pt}]   (11,3.5) -- (11,-3.5) node [midway,right,xshift=.3cm] {$n$ servers};
  \end{tikzpicture}
  \end{center}
  \caption{Resource Constrained Pull-Based policy. Jobs are sent to queues associated with idle servers, based on tokens in the virtual queue. If no tokens are present, a queue is chosen at random.}
  \label{fig:policy}
\end{figure}
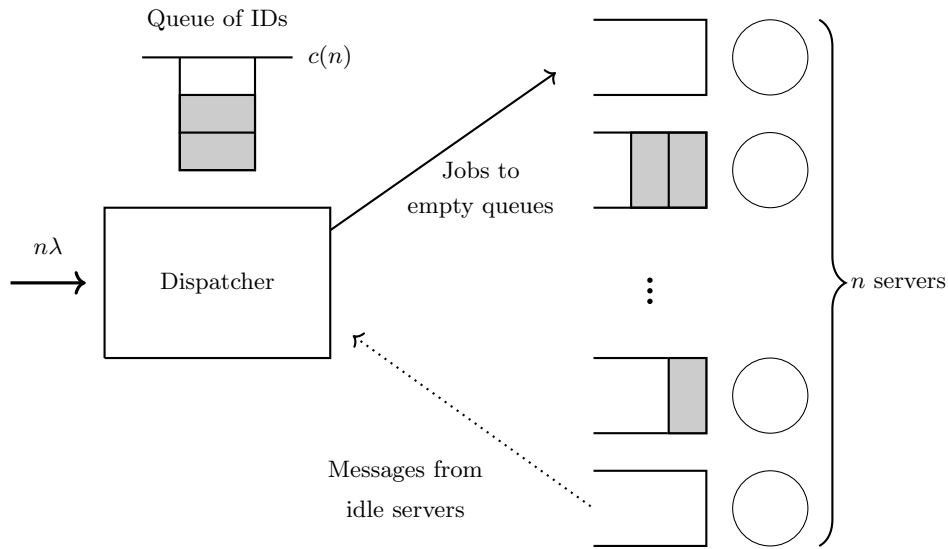
\subsubsection{High-level summary of the results}\label{sec:high-level}

We summarize our results for the RCPB policy, for three different regimes, in Table \ref{tab:regimes}, where we also introduce some mnemonic terms that we will use to refer to these regimes. Formal statements of these results are given later in this section. Furthermore, we provide a pictorial representation of the total resource requirements and the corresponding asymptotic delays in Figure \ref{fig:regimes}.

\begin{table}[ht!]
  \begin{center}
    \begin{tabular}{| c | c | c | c |}
    \hline
    Regime & Memory & Idle message rate & Delay \\ \hline\hline
    High Memory & $c(n)\in\omega(1)$ and $c(n)\in o(n)$ & $\mu(n)=\mu \geq\frac{\lambda}{1-\lambda}$ & $0$ \\ & & $\mu(n)=\mu<\frac{\lambda}{1-\lambda}$ & $>0$ \\\hline
    High Message & $c(n)=c \geq 1$ & $\mu(n)\in\omega(1)$ & $0$ \\ \hline
    Constrained & $c(n)=c \geq 1$ & $\mu(n)=\mu > 0$ & $>0$ \\ \hline
    \end{tabular}
\end{center}
\caption{The three regimes of our policy, and the resulting asymptotic delays.}
\label{tab:regimes}
\end{table}

\begin{figure}[ht!]
\begin{center}
\begin{tikzpicture}
  \filldraw[black!10] (4,3.5) -- (4,4) -- (4.5,4) -- (4.5,3.5) -- (4,3.5);
  \draw (4,3.5) -- (4,4) -- (4.5,4) -- (4.5,3.5) -- (4,3.5);
  \draw (5.25,3.75) node {Delay$>0$};
  \draw (4,2.9) -- (4,3.4) -- (4.5,3.4) -- (4.5,2.9) -- (4,2.9);
  \draw (5.25,3.15) node {Delay=0};

  \filldraw[black!10] (0,0) -- (0,2) -- (2.5,2) -- (2.5,0) -- (0,0);
  \filldraw[black!10] (0,0) -- (0,4) -- (1,4) -- (1,0) -- (0,0);

  \draw [->,thick] (-0.5,0) -- (5,0);
  \draw (6.5,0) node {Total message rate};
  \draw [->,thick] (0,-0.5) -- (0,4);
  \draw (0,4.3) node {Bits of memory};

  \draw (0.5,-0.3) node {$<\lambda n$};
  \draw (1.75,-0.3) node {$\Theta(n)$};
  \draw (3.75,-0.3) node {$\omega(n)$};

  \draw (-0.8,3) node {$\omega(\log(n))$};
  \draw (-0.8,1) node {$\Theta(\log(n))$};

  \draw (0,2) -- (5,2);
  \draw (2.5,0) -- (2.5,4);
  \draw [dotted,thick] (1,0) -- (1,4);

  \draw (1.25,3.15) node {High Memory};
  \draw (1.25,2.85) node {regime};

  \draw (3.75,1.15) node {High Message};
  \draw (3.75,0.85) node {regime};

  \draw (1.25,1.15) node {Constrained};
  \draw (1.25,0.85) node {regime};

\end{tikzpicture}
\end{center}
\caption{Resource requirements of the three regimes, and the resulting asymptotic delays.}
\label{fig:regimes}
\end{figure}
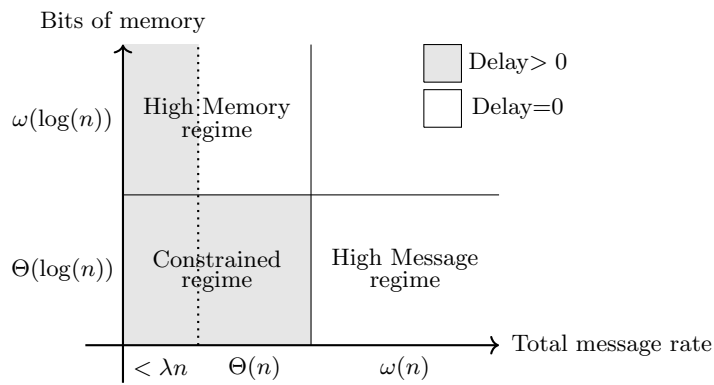

The more interesting subcase of the High Memory regime is when $\mu\geq \lambda/(1-\lambda)$, which results in zero asymptotic delay with superlogarithmic memory and linear overall message rate. Note that if we set $\mu=\lambda/(1-\lambda)$, and use the fact that servers are idle a fraction $1-\lambda$ of the time, the resulting time-average message rate becomes exactly $\lambda n$, i.e., one message per arrival.

\subsection{Stochastic and fluid descriptions of the system}
\label{section:s+f}
In this subsection, we define a stochastic process that corresponds to our model under the RCPB policy, as well as an associated fluid model.
\subsubsection{Stochastic system representation}\label{s:ssr}
Let $Q_i^n(t)$ be the number of jobs in queue $i$ (including the job currently being served, if any), at time $t$, in a $n$-server system.
We can model the system as a continuous-time Markov process whose state is the queue length vector, $Q^n(t)=\left(Q_i^n(t)\right)_{i=1}^n\in\mathbb{Z}_+^n$, together with the number of tokens, denoted by $M^n(t)\in\{0,1,\dots,c(n)\}$. However, as the system is symmetric with respect to the queues, we will use instead the more convenient representation $S^n(t)=\left(S^n_i(t)\right)_{i=0}^\infty$, where
\[ S^n_i(t)\triangleq\frac{1}{n} \sum\limits_{j=1}^{n} \mathds{1}_{[i,\infty)}\left(Q^n_j(t)\right), \quad  i \in \mathbb{Z}_+, \]
is the fraction of queues with at least $i$ jobs at time $t$.
Once more, the pair $\left(S^n(t), M^n(t)\right)$ is a continuous-time Markov process, with a countable state space.

Finally, another possible state representation involves $V^n(t)=\left(V^n_i(t)\right)_{i=1}^\infty$, where
\[ V^n_i(t)\triangleq\sum\limits_{j=i}^{\infty} S^n_j(t) \]
can be interpreted as the average amount by which a queue length exceeds $i-1$ at time $t$. In particular, $V_1^n(t)$ is the total number of jobs at time $t$ divided by $n$, and is finite, with probability $1$.

\subsubsection{Fluid model}

We now introduce the fluid model of $S^n(t)$, associated with our policy. Recall the definition of the set $\mathcal{S}^1$ in Equation \eqref{eq:S1}.

\begin{definition}[Fluid model]\label{def:fluid_model}
  Given an initial condition $s^0\in\mathcal{S}^1$, a continuous function $s(t):[0,\infty)\to\mathcal{S}^1$ is said to be a solution to the fluid model (or fluid solution) if:
  \begin{enumerate}
    \item $s(0)=s^0$.
    \item For all $t\geq 0$, $s_0(t)=1$.
    \item For all $t\geq 0$ outside of a set of Lebesgue measure zero, and for every $i\geq 1$, $s_i(t)$ is differentiable and satisfies
        \begin{align}
  \frac{ds_1}{dt}(t)=&\lambda\big(1-P_0(s(t))\big)+\lambda(1-s_1(t))P_0(s(t))-\big(s_1(t)-s_2(t)\big), \label{eq:drift1} \\
  \frac{ds_i}{dt}(t)=& \lambda\big(s_{i-1}(t)-s_i(t)\big)P_0(s(t))-\big(s_i(t)-s_{i+1}(t)\big) \quad \forall \, i\geq 2, \label{eq:drift2}
\end{align}
where $P_0(s)$ is given, for the three regimes considered, by:
\begin{itemize}
  \item [(i)] High Memory: $\quad P_0(s) = \left[ 1-\frac{\mu(1-s_1)}{\lambda} \right]^+$;
  \item [(ii)] High Message: $\quad P_0(s) = \left[ 1-\frac{1-s_2}{\lambda} \right]^+\mathds{1}_{\{s_1=1\}}$;
  \item [(iii)] Constrained: $\quad P_0(s)=\left[ \sum\limits_{k=0}^{c} \left( \frac{\mu(1-s_1)}{\lambda} \right)^k \right]^{-1}$.

      We use the convention $0^0=1$, so that the case $s_1=1$ yields $P_0(s)=1$.
\end{itemize}
  \end{enumerate}
\end{definition}


A solution to the fluid model, $s(t)$, can be thought of as a deterministic approximation to the sample paths of the stochastic process $S^n(t)$, for $n$ large enough. Note that the fluid model does not include a variable associated with the number of tokens. This is because, as we will see, the virtual queue process $M^n(t)$ evolves on a faster time scale than the processes of the queue lengths and does not have a deterministic limit. We thus have a process with two different time scales: on the one hand, the virtual queue evolves on a fast time scale (at least $n$ times faster) and from its perspective the queue process $S^n(t)$ appears static; on the other hand, the queue process $S^n(t)$ evolves on a slower time scale and from its perspective, the virtual queue appears to be at stochastic equilibrium. This latter property is manifested in the drift of the fluid model: $P_0(s(t))$ can be interpreted as the probability that the virtual queue is empty when the rest of the system is fixed at the state $s(t)$. Moreover, the drift of $s_1(t)$ is qualitatively different from the drift of the other components $s_i(t)$, for $i\geq 2$, because our policy treats empty queues differently.

We now provide some intuition for each of the drift terms in Equations \eqref{eq:drift1} and \eqref{eq:drift2}.
\begin{itemize}
  \item [(i)] $\lambda\big(1-P_0(s(t))\big)$: This term corresponds to arrivals to an empty queue while there are tokens in the virtual queue, taking into account that the virtual queue is nonempty with probability $1-P_0(s(t))$, in the limit.
  \item [(ii)] $\lambda\big(s_{i-1}(t)-s_i(t)\big)P_0(s(t))$: This term corresponds to arrivals to a queue with exactly $i-1$ jobs while there are no tokens in the virtual queue. This occurs when the virtual queue is empty and a queue with $i-1$ jobs is drawn, which happens with probability $P_0(s(t))\big(s_{i-1}(t)-s_i(t)\big)$.
  \item [(iii)] $-\big(s_i(t)-s_{i+1}(t)\big)$: This term corresponds to departures from queues with exactly $i$ jobs, which after dividing by $n$, occur at a rate equal to the fraction $s_i(t)-s_{i+1}(t)$ of servers with exactly $i$ jobs.
  \item [(iv)] Finally, the expressions for $P_0(s)$ are obtained through an explicit calculation of the steady-state distribution of $M^n(t)$ when $S^n(t)$ is fixed at the value $s$, while also letting $n\to\infty$.
\end{itemize}

Let us give an informal derivation of the different expressions for $P_0(s)$. Recall that $P_0(s)$ can be interpreted as the probability that the virtual queue is empty when the rest of the system is fixed at the state $s$. Under this interpretation, for any fixed state $s$, and for any fixed $n$, the virtual queue would behave like an M/M/1 queue with capacity $c(n)$, arrival rate $\mu(n)n(1-s_1)$, and departure rate $\lambda n$. In this M/M/1 queue, the steady-state probability of being empty is
\[ P_0^{(n)}(s)=\left[ \sum\limits_{k=0}^{c(n)} \left( \frac{\mu(n)(1-s_1)}{\lambda} \right)^k \right]^{-1}. \]
By taking the limit as $n\to\infty$, we obtain the correct expressions for $P_0(s)$, except in the case of the High Message regime with $s_1=1$. In that particular case, this simple interpretation does not work. However, we can go one step further and note that when all servers are busy (i.e., when $s_1=1$), servers become idle at rate $1-s_2$, which is the proportion of servers with exactly one job left in their queues. Since the high message rate assures that messages are sent almost immediately after the server becomes idle, only a fraction $[\lambda -(1 - s_2)]/\lambda$ of incoming jobs will go to a non-empty queue, which is exactly the probability of finding an empty virtual queue in this case.

\subsection{Technical results}\label{section:tech}
In this section we provide precise statements of our technical results.
\subsubsection{Properties of the fluid solutions}
The existence of fluid solutions will be established by showing that, almost surely, the limit of every convergent subsequence of sample paths of $S^n(t)$ is a fluid solution (Proposition \ref{prop:derivatives}). In addition,
the theorem that follows establishes uniqueness of fluid solutions for all initial conditions $s^0\in\mathcal{S}^1$, characterizes the unique equilibrium of the fluid model, and states its global asymptotic stability. The regimes mentioned in the statement of the results in this section correspond to the different assumptions on memory and message rates described in the 2nd and 3rd columns of Table \ref{tab:regimes}, respectively.

\begin{theorem}[Existence, uniqueness, and stability of fluid solutions] \label{thm:equilibrium}
A fluid solution, as described in Definition \ref{def:fluid_model}, exists and is unique for any initial condition $s^0\in\mathcal{S}^1$. Furthermore,  the fluid model has a unique equilibrium $s^*$, given by
\[ s_i^*=\lambda \left(\lambda P_0^*\right)^{i-1}, \quad \forall \, i\geq 1, \]
where $P_0^*=P_0(s^*)$ is given, for the three regimes considered, by:
\begin{itemize}
  \item [(i)] High Memory: $\quad P_0^* = \left[ 1-\frac{\mu(1-\lambda)}{\lambda} \right]^+$;
  \item [(ii)] High Message: $\quad P_0^* = 0$;
  \item [(iii)] Constrained: $\quad \label{eq:equilibrium_fixed_point}
        P_0^*=\left[ \sum\limits_{k=0}^{c} \left( \frac{\mu(1-\lambda)}{\lambda} \right)^k \right]^{-1}$.
\end{itemize}
This equilibrium is globally asymptotically stable, i.e.,
\[ \lim\limits_{t\to\infty} \left\|s(t)-s^*\right\|_w = 0, \]
for any initial condition $s^0\in\mathcal{S}^1$.
\end{theorem}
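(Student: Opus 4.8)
The plan is to establish the four assertions of the theorem — existence, uniqueness, the explicit form of the equilibrium, and global asymptotic stability — separately. \emph{Existence} comes essentially for free from the convergence analysis: by tightness of $\{S^n(\cdot)\}$ and Proposition~\ref{prop:derivatives}, every subsequential limit of the rescaled sample paths is a fluid solution, and starting the prelimits near $s^0$ produces one with the prescribed initial condition (alternatively, one solves finite truncations of \eqref{eq:drift1}--\eqref{eq:drift2} and checks they are Cauchy in $C([0,T];\ell^2_w)$ with limit in $\mathcal{S}^1$). For \emph{uniqueness} I would run a Gr\"onwall estimate in the Hilbert space $(\ell^2_w,\|\cdot\|_w)$: writing the drift as $F_i(s)=A_i(s)+P_0(s)B_i(s)$, the maps $A$ and $B$ are affine and built from the shift operators on $\ell^2_w$ (bounded, norm at most $\sqrt2$), hence Lipschitz, with $B$ bounded on $\mathcal{S}$; in the High Memory and Constrained regimes $P_0$ is bounded by $1$ and Lipschitz in $s_1$, so $F$ is globally Lipschitz on $\mathcal{S}$ and $\frac{d}{dt}\|x(t)-y(t)\|_w^2\le 2C\|x(t)-y(t)\|_w^2$ forces $x\equiv y$. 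The High Message regime is subtler because $P_0(s)=[1-(1-s_2)/\lambda]^+\mathds{1}_{\{s_1=1\}}$ jumps at $s_1=1$; here I would use that $P_0\equiv0$ on $\{s_1<1\}$ (where the dynamics are Lipschitz: pure draining for $i\ge2$ and $\dot s_1=\lambda-(s_1-s_2)$) and treat $\{s_1=1\}$ as a reflecting boundary — $s_1\equiv1$ can persist only while $s_2\ge1-\lambda$, and $s_1$ is pushed strictly inward once $s_2<1-\lambda$ — so that the solution is uniquely determined on each phase and at the (deterministic) transitions.

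For the \emph{equilibrium}, setting the right-hand sides of \eqref{eq:drift1}--\eqref{eq:drift2} to zero, the equations for $i\ge2$ say the increments $d_i:=s_i^*-s_{i+1}^*$ satisfy $d_i=\lambda P_0^*\,d_{i-1}$; summability of $s^*$ forces $\sum_{j\ge1}d_j=s_1^*$, hence $d_1=s_1^*(1-\lambda P_0^*)$ and $s_i^*=s_1^*(\lambda P_0^*)^{i-1}$ (well defined since $\lambda P_0^*\le\lambda<1$). Substituting into \eqref{eq:drift1} and simplifying collapses that equation to $s_1^*=\lambda$, so $s_i^*=\lambda(\lambda P_0^*)^{i-1}$. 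Finally, imposing the self-consistency $P_0^*=P_0(s^*)$ with $s_1^*=\lambda$ gives the displayed formulas directly in the High Memory and Constrained regimes and $P_0^*=0$ in the High Message regime (where $s_1^*<1$ switches off the indicator); one also checks there is no equilibrium with $s_1^*=1$, which would entail $\lambda=1$.

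The \emph{global stability} claim is the heart of the proof, and I would attack it in four steps. (1) Monotonicity: the vector field is cooperative for the partial order on $\mathcal{S}$, i.e.\ $\partial F_i/\partial s_j\ge0$ for $j\ne i$, since $P_0$ is nondecreasing in its relevant argument ($s_1$, or $s_2$ in the High Message regime) and $s_{i-1}\ge s_i$; hence the flow is order preserving (this persists through the reflection picture in the High Message regime). (2) Lower bound: the solution $\underline s(\cdot)$ from the empty state $\underline s^0=0$ is nondecreasing in $t$ (as $\underline s^0$ is the bottom of $\mathcal{S}$) and bounded above by the stationary $s^*$, so it converges componentwise to a fixed point in $\mathcal{S}^1$, which is $s^*$ by uniqueness; monotonicity gives $\liminf_{t\to\infty}s_i(t)\ge s_i^*$ for every $i$ and every $s^0$. (3) Upper bound via the total mass: a telescoping computation yields $\frac{d}{dt}V_1(t)=\lambda-s_1(t)$; combining this with the lower bound and an a priori bound $\limsup_{t\to\infty}V_1(t)<\infty$ (e.g.\ obtained by dominating the total mass of $s(\cdot)$ by that of the random-routing fluid model, which tends to $\lambda/(1-\lambda)$) one gets $\limsup_{t\to\infty}V_1(t)\le\sum_i s_i^*$, and then the squeeze
\[
 \sum_i s_i^*\ \le\ \sum_i\liminf_{t\to\infty}s_i(t)\ \le\ \liminf_{t\to\infty}V_1(t)\ \le\ \limsup_{t\to\infty}V_1(t)\ \le\ \sum_i s_i^*
\]
together with $\limsup_{t\to\infty}s_i(t)\le\limsup_{t\to\infty}V_1(t)-\sum_{j\ne i}\liminf_{t\to\infty}s_j(t)\le s_i^*$ gives $s_i(t)\to s_i^*$ for each $i$. (4) Upgrade: since $2^{-i}|s_i(t)-s_i^*|^2\le 2^{-i}$, dominated convergence turns this pointwise convergence into $\|s(t)-s^*\|_w\to0$.

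I expect step (3) to be the main obstacle — specifically the a priori control of $\limsup_{t\to\infty}V_1(t)$ for initial conditions in $\mathcal{S}^1$ with an arbitrarily heavy (but summable) tail, since $\frac{d}{dt}V_1=\lambda-s_1$ and the bound $\liminf_{t}s_1(t)\ge\lambda$ alone only preclude linear growth of $V_1$ at a vanishing rate; a cleaner comparison or Lyapunov argument upgrading $\liminf_{t}s_1(t)\ge\lambda$ to $s_1(t)\to\lambda$ would also close the gap, and is the route I would try to push through. The recurring secondary technicality is making the discontinuity of $P_0$ in the High Message regime rigorous everywhere it is used (existence, uniqueness, and monotonicity).
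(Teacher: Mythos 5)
Your overall architecture matches the paper's: uniqueness via a Lipschitz/Gr\"onwall argument (the paper extends the drift by Kirszbraun and invokes Picard--Lindel\"of, which is the same mechanism as your Hilbert-space Gr\"onwall estimate), the equilibrium by the same telescoping/geometric-ratio calculation, and global stability via monotonicity of the flow and sandwiching. Your treatment of the High Message discontinuity is also the right picture: $P_0\equiv0$ off $\{s_1=1\}$, the trajectory can exit $D=\{s_1=1,\,s_2>1{-}\lambda\}$ only through $s_2=1{-}\lambda$, and once it leaves it never returns; the paper proves exactly these two facts (Claim~\ref{cla:trajectories}) and concatenates uniqueness on the resulting phases.

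The gap you flag in step~(3) is real, and the way you propose to plug it does not close. First, domination by the random-routing fluid model gives at best $\limsup_t V_1(t)\le\lambda/(1-\lambda)$, which exceeds $\sum_i s_i^*$ unless $P_0^*=1$, so it cannot deliver the tight upper bound your squeeze needs. Second, even boundedness of $V_1$ together with $\liminf_t s_1(t)\ge\lambda$ does not force $V_1$ to converge, let alone to converge to $\sum_i s_i^*$: the derivative $\dot V_1=\lambda-s_1$ merely has nonpositive liminf, which leaves room for non-vanishing oscillation. The missing idea is the paper's choice of envelope: instead of working with $s(\cdot)$ (for which $s_1(t)\ge\lambda$ holds only in the limit), take $s^u(0)=\max\{s^0,s^*\}$, so that $s^u(t)\ge s^*$ pointwise by monotonicity, hence $s_1^u(t)\ge\lambda$ for \emph{all} $t$, hence $v^u_1$ is nonincreasing, bounded below, and therefore convergent; this yields $\int_0^\infty\big(s^u_1(\tau)-\lambda\big)\,d\tau<\infty$ and, via Lipschitz continuity of $s^u_1$, $s^u_1(t)\to\lambda$. (You already use the symmetric trick with the bottom envelope from $0$; the top envelope is what you are missing.)

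Even with the right envelope, a single squeeze on the total mass $V_1$ does not suffice: the paper's proof of $s^u_i(t)\to s_i^*$ for $i\ge2$ proceeds by induction on $i$, showing $\int_0^\infty\big(s^u_i(\tau)-s_i^*\big)\,d\tau<\infty$, and the crux is a uniform-in-$t$ bound on $\int_0^t\big(P_0(s^u(\tau))-P_0^*\big)\,d\tau$. This bound is regime-specific (Lipschitz continuity of $P_0$ in $s_1$ for High Memory and Constrained; a finite-Lebesgue-measure argument for High Message, since $\dot v_1^u\le0$ forces $s^u_1\equiv1$ only for a bounded total duration) and has no counterpart in your plan. Once coordinate-wise convergence is in hand, your step~(4) (dominated convergence to upgrade to $\|\cdot\|_w$) is correct and coincides with the paper's; notably, no $\ell^1$ control of $s^u(t)$ as $t\to\infty$ is ever needed, which is why the paper can afford to bypass the $\limsup V_1\le\sum_i s_i^*$ statement entirely.
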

  The proof is given in Sections \ref{sec:proof_equilibrium} (uniqueness and stability) and \ref{sec:proof_fluid} (existence).

\begin{remark}
Note that, if $\mu\geq \lambda/(1-\lambda)$, the High Memory regime also has $P_0^*=0$ in equilibrium.
\end{remark}

\subsubsection{Approximation theorems}
The three results in this section justify the use of the fluid model as an approximation to the finite stochastic system. The first one states that the evolution of the process $S^n(t)$ is almost surely uniformly close, over any finite time horizon $[0,T]$, to the unique fluid solution $s(t)$.

\begin{theorem}[Convergence of sample paths]\label{thm:fluid_limit}
Fix $T>0$ and $s^0\in\mathcal{S}^1$. Under each of the three regimes, if
\[\lim\limits_{n\to \infty} \left\|S^n(0)-s^0\right\|_w=0, \quad\quad a.s., \]
then
\[\lim\limits_{n\to \infty} \sup\limits_{0\leq t \leq T} \left\|S^n(t)-s(t)\right\|_w=0, \quad\quad a.s., \]
where $s(t)$ is the unique fluid solution with initial condition $s^0$.
\end{theorem}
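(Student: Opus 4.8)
The strategy is the usual two-step fluid-limit argument — relative compactness of the sample paths followed by identification of every subsequential limit as a fluid solution — but the two-time-scale structure forces a stochastic-averaging argument in the identification step. Uniqueness of fluid solutions (Theorem \ref{thm:equilibrium}) then upgrades subsequential to full convergence.

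\emph{Representation and a priori bounds.} I would first realize the whole sequence of systems on one probability space using independent Poisson processes: a rate-$\lambda n$ arrival process, unit-rate processes $\{D_j\}_{j=1}^n$ for potential service completions, and rate-$\mu(n)$ processes $\{B_j\}_{j=1}^n$ for messages from idle servers. Expanding the effect of the RCPB routing rule on $nS^n_i(t)=\sum_{j=1}^n\mathds{1}_{[i,\infty)}\big(Q^n_j(t)\big)$ and centering the Poisson integrals gives, for every $i\geq 1$,
\[
 S^n_i(t)=S^n_i(0)+\int_0^t F_i\!\big(S^n(u),\,\mathds{1}_{\{M^n(u)=0\}}\big)\,du+\mathcal{M}^n_i(t),
\]
where $F_i$ is exactly the right-hand side of \eqref{eq:drift1}--\eqref{eq:drift2} with $P_0(\cdot)$ replaced by the instantaneous indicator $\mathds{1}_{\{M^n(u)=0\}}$, and $\mathcal{M}^n_i$ is a martingale whose predictable quadratic variation is $O(1/n)$ on $[0,T]$ (all event rates are $\Theta(n)$). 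Next I would control $V_1^n(t)=\sum_{i\geq1}S^n_i(t)$: since arrivals occur at rate $\lambda n<n$, a supermartingale/Gr\"onwall estimate bounds $\sup_{t\le T}V_1^n(t)$ uniformly in $n$ with overwhelming probability, which forces the tail of the weighted norm to be uniformly small, i.e.\ for every $\eta>0$ there is a finite $I$ with $\sum_{i>I}2^{-i}\big((S^n_i(t))^2+(s_i(t))^2\big)<\eta$ for all $t\le T$, eventually in $n$, a.s. Hence it suffices to prove uniform convergence of the finitely many coordinates $S^n_1,\dots,S^n_I$.

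\emph{Tightness.} By Doob's inequality together with the $O(1/n)$ bound on the quadratic variations and a Borel--Cantelli argument, $\sup_{t\le T}|\mathcal{M}^n_i(t)|\to 0$ a.s.\ for each $i$. Since $|F_i|\le 2\lambda+2$, the paths $t\mapsto(S^n_1,\dots,S^n_I)(t)$ are, eventually in $n$, uniformly bounded with oscillation at most $(2\lambda+2)\delta+o(1)$ over any interval of length $\delta$; by Arzel\`a--Ascoli every subsequence has a further subsequence converging uniformly on $[0,T]$ to a Lipschitz limit $s=(s_i)$ with $s(0)=s^0$ and $s_0\equiv1$.

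\emph{Averaging: identifying the limit (the crux).} Along such a subsequence one must show $\int_0^t \mathds{1}_{\{M^n(u)=0\}}\,g(S^n(u))\,du\to\int_0^t P_0(s(u))\,g(s(u))\,du$ for the continuous $g$'s appearing in the $F_i$. Partition $[0,t]$ into intervals $[\tau_k,\tau_{k+1}]$ of length $\epsilon$. On each one $S^n$ stays within $O(\epsilon)+o(1)$ of $s(\tau_k)$, hence so does $1-S^n_1(u)$ relative to $1-s_1(\tau_k)$. Conditionally on the queue-length trajectory, on that interval the virtual queue $M^n$ is sandwiched — via a monotone coupling of birth--death chains in the birth rate — between two time-homogeneous $M/M/1/c(n)$ chains run at speed $\Theta(n)$ with birth rates $\mu(n)n\big(1-s_1(\tau_k)\pm O(\epsilon)\big)$ and death rate $\lambda n$. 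For such chains the relaxation time is $o(\epsilon)$ once $\epsilon$ dominates $c(n)/n$ (and with extra care near parameter values where the fast chain is critical, where one instead uses only the crude bound $\mathds{1}_{\{M^n=0\}}\in[0,1]$ on a short window), so a concentration estimate for additive functionals of Markov chains gives that $\tfrac1\epsilon\int_{\tau_k}^{\tau_{k+1}}\mathds{1}_{\{M^n(u)=0\}}\,du$ lies within $O(\epsilon)+o_n(1)$ of $P_0^{(n)}\big(s(\tau_k)\pm O(\epsilon)\big)$, which tends to $P_0(s(\tau_k))$ since $P_0^{(n)}\to P_0$ uniformly on compacts and $P_0$ is continuous in $s_1$. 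Summing over $k$ and letting $n\to\infty$ then $\epsilon\to0$ proves the claim; passing to the limit in the integral equation shows $s$ satisfies \eqref{eq:drift1}--\eqref{eq:drift2} a.e., i.e.\ $s$ is a fluid solution. By Theorem \ref{thm:equilibrium} the fluid solution with initial condition $s^0$ is unique, so the whole sequence $S^n$ converges to it, and the tail estimate upgrades coordinatewise convergence to convergence in $d^{\mathbb{Z}_+}$.

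\emph{Main obstacle.} The delicate case is the High Message regime on the boundary $s_1=1$, where $P_0$ is discontinuous and the birth rate $\mu(n)n(1-S^n_1)$ into the virtual queue is itself $o(n)$ and comparable to the fluctuations of $S^n_1$, so the ``freeze $S^n_1$'' coupling breaks down. There one must argue through a boundary layer of width $o(1)$: tokens are created essentially at the service-completion rate $n(S^n_1-S^n_2)$ of the one-job servers and, since $\mu(n)\to\infty$, registered almost instantly, while they are consumed at rate $\lambda n$ times the fraction of arrivals seeing a nonempty virtual queue; balancing the two flows yields $P_0=[\lambda-(1-s_2)]^+/\lambda$. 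Making this rigorous needs a separate comparison argument for $M^n$ near the boundary and is where most of the work in Section \ref{sec:proof_inter} goes. A secondary point is obtaining the a.s.\ (not just in-probability) conclusion uniformly in $n$, which I would handle with Chernoff bounds for the Poisson increments and for the additive functionals of the fast chain over a sufficiently fine deterministic mesh, plus Borel--Cantelli.
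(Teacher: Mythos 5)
Your overall architecture matches the paper's: a pathwise construction on a common probability space, coordinate-wise tightness via Lipschitz oscillation bounds and Arzel\`a--Ascoli, and identification of subsequential limits through a stochastic-averaging argument that sandwiches the fast virtual-queue process between time-homogeneous birth--death chains, with uniqueness (Theorem \ref{thm:equilibrium}) upgrading subsequential to full convergence. That is exactly the shape of the proof in Section \ref{sec:proof_fluid} (not \ref{sec:proof_inter}, which you cite). A cosmetic difference is that you rely on a martingale decomposition with Doob and Borel--Cantelli where the paper works pathwise from the FSLLN and Glivenko--Cantelli; both are routine.

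There are two places where your sketch is weaker than what the paper needs. First, in the averaging step you appeal directly to the relaxation time of the $M/M/1/c(n)$ chain being $o(\epsilon)$ once $\epsilon \gg c(n)/n$. In the High Memory regime $c(n)$ can grow like $n^{1-\delta}$, and near the critical parameter $\mu(1-s_1)=\lambda$ the mixing time of the $c(n)$-state chain is of order $c(n)^2/n$ in real time, which need not vanish; the crude in-probability bound you invoke as a fallback does not recover the required limit $\tfrac1\epsilon\int \mathds{1}_{\{M^n=0\}} \approx P_0$ on such windows. The paper sidesteps this entirely by a further monotone coupling: it replaces $c(n_k)$ by a fixed $c_l$ (for the lower bound) and by $\infty$ (for the upper bound), obtaining chains whose equilibrium state-$0$ probabilities $P_0^{\pm}$ are computed exactly by the M/M/1 formula and extracted via PASTA, and only afterwards sends $l\to\infty$ and $\epsilon\to 0$. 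That two-parameter limit avoids any mixing-time estimate that depends on $n$.

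Second, for the High Message regime at the boundary $s_1(t)=1$ you correctly flag this as the crux but your proposed route (a boundary-layer flow-balance for $M^n$, i.e., ``a separate comparison argument for $M^n$ near the boundary'') is not what works, and it is not clear it can be made rigorous: on that set the birth rate $\mu(n)\,n(1-S^n_1)$ into the virtual queue is of the same order as the fluctuations of $S^n_1$ itself, so the freeze-and-couple strategy degenerates, and any estimate on $\mathds{1}_{\{M^n=0\}}$ via $M^n$ alone is delicate. The paper's argument bypasses $M^n$ completely at the boundary: at a regular time $t$ with $s_1(t)=1$, the hard constraint $s_1\le 1$ forces $\dot s_1(t)=0$; the departure derivative $\dot d_1(t)=1-s_2(t)$ is obtained independently (it does not involve the token process); hence $\dot a_1(t)=\dot s_1(t)+\dot d_1(t)=1-s_2(t)$, and together with $\dot a_1\le\lambda$ this pins down $s_2(t)\ge 1-\lambda$ and shows the identity $\dot a_1 = \lambda(1-P_0(s))+\lambda(1-s_1)P_0(s)$ holds with $P_0(s(t))=1-(1-s_2(t))/\lambda$. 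You should replace your boundary-layer heuristic with this derivative-bookkeeping argument; without it the proposal has a genuine gap in exactly the case you identified as hardest.
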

  The proof is given in Section \ref{sec:proof_fluid}.

\begin{remark}
 On the technical side, the proof is somewhat involved because the process $(S^n(t),M^n(t))$ is not the usual density-dependent Markov process studied by Kurtz \cite{densityKurtz} and which appears in the study of several dispatching policies (e.g., \cite{mitzenmacher,stolyar14,srikant15}). This is because $M^n(t)$ is not scaled by $n$, and consequently evolves in a faster time scale. We are dealing instead with an infinite-level infinite-dimensional jump Markov process, which is a natural generalization of its finite-level finite-dimensional counterpart studied in Chapter 8 of \cite{weiss}. The fact that our process may have infinitely many levels (memory states) and is infinite-dimensional prevents us from directly applying  known results. Furthermore, even if we truncated $S^n(t)$ to be finite-dimensional as in \cite{shahFiniteLevels}, our process still would not satisfy the more technical hypotheses of the corresponding result in \cite{weiss} (Theorem 8.15). Finally, the large deviations techniques used to prove Theorem 8.15 in \cite{weiss} do not directly generalize to infinite dimensions. For all of these reasons, we will prove our fluid limit result directly, by using a coupling approach, as in \cite{bramson98} and \cite{powerOfLittle}. Our results involve a separation of time scales similar to the ones in \cite{fadingMemories} and \cite{lossNetworks}.
\end{remark}

If we combine Theorems \ref{thm:fluid_limit} and \ref{thm:equilibrium}, we obtain that after some time, the state of the finite system $S^n(t)$ can be approximated by the equilibrium of the fluid model $s^*$, because
\[ S^n(t) \xrightarrow{n\to\infty} s(t) \xrightarrow{t\to\infty} s^*, \]
almost surely. If we interchange the order of the limits over $n$ and $t$, we obtain the limiting behavior of the invariant distribution $\pi^n_s$ of $S^n(t)$ as $n$ increases. In the next proposition and theorem, we show that the result is the same, i.e., that
\[ S^n(t) \xrightarrow{t\to\infty} \pi^n_s \xrightarrow{n\to\infty} s^*, \]
in distribution, so that the interchange of limits is justified.

The first step is to show that for every finite $n$, the stochastic process of interest is positive recurrent.

\begin{proposition}[Stochastic stability]\label{prop:ergodicity}
  For every $n$, the Markov process $\left(S^n(t),M^n(t)\right)$ is positive recurrent and therefore has a unique invariant distribution $\pi^n$.
\end{proposition}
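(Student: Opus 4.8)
The plan is to establish positive recurrence of the Markov process $(S^n(t), M^n(t))$ — equivalently, of the underlying process $(Q^n(t), M^n(t))$ on $\mathbb{Z}_+^n \times \{0,1,\dots,c(n)\}$ — by exhibiting a Lyapunov (Foster–Lyapunov) function with negative drift outside a finite set. Since $n$ is fixed throughout, this is a statement about a single countable-state, irreducible, conservative continuous-time Markov chain with bounded transition rates (the total event rate is at most $\lambda n + n + \mu(n) n$, uniformly bounded in the state), so it suffices to check the standard drift condition: there exist a function $\Phi \geq 0$ with finite sublevel sets, constants $\varepsilon>0$ and $b<\infty$, and a finite set $B$ such that the generator $\mathcal{L}$ satisfies $\mathcal{L}\Phi \leq -\varepsilon$ outside $B$ and $\mathcal{L}\Phi \leq b$ everywhere. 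Irreducibility is immediate: from any state the chain can reach the empty state (no jobs, no tokens) by a sequence of departures and token-discards, and conversely.

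The natural candidate is the total number of jobs, $\Phi(Q,M) = \sum_{i=1}^n Q_i = n V_1^n$, possibly augmented by a small multiple of $M$ to control the boundary behavior. First I would compute the drift of $\sum_i Q_i$: arrivals occur at rate $\lambda n$ and each adds one job, while departures occur at rate equal to the number of busy servers, $\#\{i : Q_i \geq 1\}$. Hence $\mathcal{L}\big(\sum_i Q_i\big) = \lambda n - \#\{i: Q_i\geq 1\}$. This is negative whenever the number of busy servers exceeds $\lambda n$, i.e. whenever at least $\lceil \lambda n\rceil + 1 \leq n$ servers are nonempty — but it fails to be negative when many servers are idle, which can happen even when $\sum_i Q_i$ is large (e.g. one server with an enormous queue and the rest empty). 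So the bare total-jobs function does not have drift bounded away from zero on the complement of a finite set. The fix is to note that when fewer than $\lceil \lambda n \rceil + 1$ servers are busy but $\sum_i Q_i$ is large, some busy queue has length at least $\sum_i Q_i / n$, and over the next unit of time that server cannot idle; more cleanly, one can use a weighted function such as $\Phi(Q,M) = \sum_{i=1}^n f(Q_i)$ with $f$ convex (e.g. $f(q) = q^2$, or $f(q)=q$ combined with a secondary argument), for which a busy server with a long queue contributes a large negative departure term $-(f(Q_i)-f(Q_i-1))$ that grows with $Q_i$. With $f(q)=q^2$ one gets $\mathcal{L}\Phi = \lambda n\sum(\text{arrival increments}) - \sum_{i:Q_i\geq 1}(2Q_i - 1)$, and since $\sum_{i:Q_i\geq1} Q_i = \sum_i Q_i$, the departure term is $-(2\sum_i Q_i - \#\text{busy})$, which dominates the $O(n\cdot\max_i Q_i)=O(n\sum_i Q_i /1)$... — here one must be slightly careful, since an arrival to the largest queue contributes $+(2Q_{\max}+1)$; choosing instead $\Phi$ as total jobs but invoking a two-step/$T$-skeleton drift argument (bounding $\mathbb{E}[\sum_i Q_i(t+1) - \sum_i Q_i(t) \mid \text{state}]$) avoids this, because over a unit interval a server that starts with $Q_i \geq 1$ completes at most a geometric number of jobs while the queue is large, so the expected number of busy servers stays near $n$ whenever $\sum_i Q_i$ is large. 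The term $M$ only ranges over the finite set $\{0,\dots,c(n)\}$, so it contributes a bounded perturbation to every drift computation and can be absorbed into the constant $b$; it plays no role in recurrence.

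The main obstacle is precisely the one flagged above: the idle-server issue means the one-step generator drift of the obvious Lyapunov function is not negative on a cofinite set, so the argument needs either a carefully chosen convex/weighted Lyapunov function or a fluid-style / $T$-step drift criterion (à la Foster, or the continuous-time analogue). I expect the cleanest route is the $T$-step argument: fix $T=1$, show that for the embedded chain $(Q^n(kT), M^n(kT))$ one has $\mathbb{E}\big[\sum_i Q_i^n((k+1)T)\big] - \sum_i Q_i^n(kT) \leq -\varepsilon$ whenever $\sum_i Q_i^n(kT)$ is large enough, using the fact that when the total backlog is large the busy-server count is at least $\lceil \lambda n\rceil+1$ for all but a vanishing fraction of the interval (any server with a large queue does not empty within time $T$ except with exponentially small probability, and the number of such "persistently busy" servers can be made to exceed $\lambda n$), hence the net expected drift over the interval is at most $(\lambda n - \lceil \lambda n\rceil - 1)T + o(1) < 0$; combined with the uniform bound on jump rates this yields positive recurrence of the continuous-time chain, and irreducibility then gives the unique invariant distribution $\pi^n$.
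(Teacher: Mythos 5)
You identify the right candidate, namely a convex Lyapunov function $\Phi(q,m)=\sum_i q_i^2$ (the paper uses $\frac{1}{n}\sum_i q_i^2$), but you abandon it because of a miscalculation of the arrival drift, and this is a genuine gap. You write that an arrival to the largest queue contributes $+(2Q_{\max}+1)$ and conclude the arrival term is of order $n\cdot\max_i Q_i$. In fact, when $m=0$ the dispatcher routes \emph{uniformly at random}, so each individual queue $i$ receives arrivals at rate $\lambda n\cdot\frac{1}{n}=\lambda$, not $\lambda n$. The arrival drift contribution is therefore $\lambda\sum_{i=1}^n(2q_i+1)=\lambda\bigl(2\sum_i q_i + n\bigr)$, which is $O(\sum_i q_i)$, not $O(n\max_i q_i)$. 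When $m>0$ the job goes to an empty queue, contributing $(0+1)^2-0^2=1$, so the total arrival contribution is only $\lambda n$, which is even smaller. In both cases the departure contribution $-\sum_{i:q_i>0}(2q_i-1)=-2\sum_i q_i + \#\{i:q_i>0\}$ dominates, because $\lambda<1$: the net generator drift is at most $-2(1-\lambda)\sum_i q_i + (\lambda+1)n$, which is $\leq -n$ once $\sum_i q_i\geq \frac{n(\lambda+2)}{2(1-\lambda)}$, a finite set. This is exactly the calculation the paper performs (with the trivial normalization by $n$), and it completes the proof with no need for a multi-step or $T$-skeleton drift criterion.

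Your fallback $T$-step argument is both unnecessary and underspecified: the key assertion that ``the busy-server count is at least $\lceil\lambda n\rceil+1$ for all but a vanishing fraction of the interval'' whenever $\sum_i Q_i$ is large does not follow from the total backlog being large (all the mass could sit in a single queue, leaving $n-1$ servers idle at time $0$), and making it precise would require a separate transient-relaxation estimate that the quadratic Lyapunov function renders superfluous. You should simply push through the one-step generator computation with $f(q)=q^2$: the per-queue cancellation (arrival rate $\lambda < 1 =$ service rate at each busy queue) is what makes the quadratic drift negative, and the fact that pull-routing ($m>0$) sends jobs to empty queues only helps.
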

  The proof is given in Section \ref{sec:proof_stoch_stab}.\\

Given $\pi^n$, the unique invariant distribution of the process $\left(S^n(t),M^n(t)\right)$, let
\[ \pi^n_s(\cdot) =\sum_{m=0}^{c(n)} \pi^n(\cdot,m)\]
be the marginal for $S^n$. We have the following result concerning the convergence of this sequence of marginal distributions.

\begin{theorem}[Convergence of invariant distributions] \label{prop:interchange}
  We have
  \[ \lim\limits_{n\to\infty} \pi^n_s = \delta_{s^*}, \quad\text{in distribution}. \]
\end{theorem}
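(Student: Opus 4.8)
The plan is to run the classical interchange-of-limits argument: first show that the family $\{\pi^n_s\}_n$ is relatively compact, and then show that every one of its subsequential weak limits equals $\delta_{s^*}$. The idea for the second part is to push a \emph{stationary} copy of $S^n$ through the transient fluid limit of Theorem~\ref{thm:fluid_limit} and then let time go to infinity along the resulting deterministic fluid solution, using the global asymptotic stability of Theorem~\ref{thm:equilibrium}. Crucially, the two limits are taken in the right order: the law of the fluid solution at a fixed time is identified with the limiting measure first, and only afterwards is the time limit taken.

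First I would observe that relative compactness is automatic: $\mathcal{S}\subseteq[0,1]^{\mathbb{Z}_+}$ is compact and the metric induced on it by $\|\cdot\|_w$ coincides with the product topology, so any sequence of probability measures on $\mathcal{S}$ is tight. It thus suffices to identify the subsequential limits. Since the fluid model is only defined for initial conditions in $\mathcal{S}^1$, I would next establish the uniform bound $\sup_n\mathbb{E}_{\pi^n}[V_1^n]<\infty$, where $V_1^n=\sum_{i\ge1}S^n_i$ is the number of jobs per server. For this I would use the Lyapunov function $W^n\triangleq\frac{1}{n}\sum_{i\ge1}V_i^n=\frac{1}{2n}\sum_{j=1}^n Q^n_j(Q^n_j+1)$, whose generator is $\mathcal{L}W^n=\frac{1}{n}\big(\sum_{j=1}^n Q^n_j r^n_j+\lambda n-n V_1^n\big)$, with $r^n_j$ the instantaneous arrival rate into queue $j$ in the current state. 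Under the RCPB policy an incoming job is sent either to a server holding a token --- which is necessarily idle, so that queue has length $0$ --- or, if no token is present, to a uniformly random server; in both cases $\sum_j Q^n_j r^n_j\le\lambda\sum_j Q^n_j=\lambda n V_1^n$, and hence $\mathcal{L}W^n\le\lambda-(1-\lambda)V_1^n$. Integrating this against $\pi^n$ (justified by Proposition~\ref{prop:ergodicity} and a standard truncation/Dynkin argument) yields $\mathbb{E}_{\pi^n}[V_1^n]\le\lambda/(1-\lambda)$ for all $n$. Since $s\mapsto\sum_i s_i$ is lower semicontinuous for the product topology, Portmanteau's theorem then gives $\mathbb{E}_{\pi^\infty_s}[\sum_i s_i]\le\lambda/(1-\lambda)<\infty$ for every weak limit $\pi^\infty_s$, so in particular $\pi^\infty_s(\mathcal{S}^1)=1$.

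To identify a given subsequential limit, fix a subsequence with $\pi^{n_k}_s\Rightarrow\pi^\infty_s$. By Skorokhod's representation theorem I would realize, on a common probability space, random variables $\hat S^{n_k}(0)$ with law $\pi^{n_k}_s$ and $\hat S^\infty$ with law $\pi^\infty_s$ such that $\|\hat S^{n_k}(0)-\hat S^\infty\|_w\to0$ almost surely; draw $\hat M^{n_k}(0)$ from the conditional law $\pi^{n_k}(\cdot\mid\hat S^{n_k}(0))$ using auxiliary independent randomness, and run the $n_k$-server dynamics with independent driving Poisson clocks. Then $(\hat S^{n_k}(0),\hat M^{n_k}(0))\sim\pi^{n_k}$, so the process is stationary and $\hat S^{n_k}(t)\sim\pi^{n_k}_s$ for every $t\ge0$. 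On the other hand, since $\hat S^\infty\in\mathcal{S}^1$ a.s.\ and $\|\hat S^{n_k}(0)-\hat S^\infty\|_w\to0$ a.s., Theorem~\ref{thm:fluid_limit} (applied conditionally on $\hat S^\infty$) gives $\sup_{0\le t\le T}\|\hat S^{n_k}(t)-\hat s(t)\|_w\to0$ a.s.\ for every $T>0$, where $\hat s(\cdot)$ is the unique fluid solution started at $\hat S^\infty$. Hence, for each fixed $t\ge0$, $\hat S^{n_k}(t)\to\hat s(t)$ a.s., and therefore in distribution; comparing with $\hat S^{n_k}(t)\sim\pi^{n_k}_s\Rightarrow\pi^\infty_s$ and using uniqueness of weak limits, $\mathrm{law}(\hat s(t))=\pi^\infty_s$ for all $t\ge0$. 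Finally, Theorem~\ref{thm:equilibrium} gives $\|\hat s(t)-s^*\|_w\to0$ a.s.\ as $t\to\infty$, so $\mathrm{law}(\hat s(t))\Rightarrow\delta_{s^*}$; but this law is the constant $\pi^\infty_s$, so $\pi^\infty_s=\delta_{s^*}$. As every subsequential limit equals $\delta_{s^*}$ and $\{\pi^n_s\}_n$ is tight, $\pi^n_s\Rightarrow\delta_{s^*}$, which is the claim.

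The calculations behind the moment bound are short once the Lyapunov function is chosen, and the final time limit is bookkeeping. The main obstacle will be the transfer, in the third step, of the almost-sure finite-horizon fluid limit to a genuinely stationary version of the process: this requires the uniform moment bound so that the limiting measure is supported on $\mathcal{S}^1$ (otherwise the fluid model is not even defined there), a careful Skorokhod coupling --- made slightly awkward by the memory component $M^n$ living in an $n$-dependent state space --- and an application of Theorem~\ref{thm:fluid_limit} with a \emph{random} initial condition, which must be handled by conditioning on $\hat S^\infty$.
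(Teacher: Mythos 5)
Your proposal is correct and follows the same overall scheme as the paper's proof: relative compactness plus a uniform moment bound to get limiting measures supported on $\mathcal{S}^1$, a Skorokhod coupling so that stationary initial data converge almost surely, a conditional application of Theorem~\ref{thm:fluid_limit}, stationarity to identify the law of $\hat{s}(t)$ as the subsequential limit $\pi^\infty_s$ for every $t$, and global asymptotic stability (Theorem~\ref{thm:equilibrium}) to force $\pi^\infty_s=\delta_{s^*}$. There are, however, three genuine (if local) deviations worth noting. First, the paper obtains the uniform bound on $\mathbb{E}_{\pi^n}\big[\|S^n\|_1\big]$ from the \emph{linear} Lyapunov function $\Psi(q,m)=q_1$ together with external results of Hajek and of Gamarnik--Zeevi (see Lemma~\ref{lem:bound}), which also deliver a geometric tail bound $\pi^n(Q^n_1\geq k)\leq(2-\lambda)^{-k/2}$ used elsewhere in the paper; you instead compute the generator of a quadratic Lyapunov function directly and read off $\mathbb{E}_{\pi^n}[V^n_1]\leq\lambda/(1-\lambda)$, which is self-contained and suffices here (the discrepancy between your two written expressions for $W^n$ --- $\frac{1}{n}\sum_i V^n_i$ versus $\frac{1}{2n}\sum_j Q^n_j(Q^n_j+1)$ --- is an $n$-scaling typo, but your generator computation is consistent with the second form, and the conclusion is unaffected). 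Second, you pass the moment bound to the limit via lower semicontinuity of $s\mapsto\sum_i s_i$ and Portmanteau, whereas the paper invokes the tail bound and dominated convergence through Lemma~\ref{lem:delay}; both are valid. Third, and most usefully, your final identification step is cleaner: you observe that the constant family of laws $\mathrm{law}(\hat{s}(t))=\pi^\infty_s$ must equal $\delta_{s^*}$ because $\hat{s}(t)\to s^*$ almost surely as $t\to\infty$, whereas the paper runs an $\epsilon$-$T_\epsilon$ argument requiring uniform-over-compacts convergence of fluid trajectories and continuous dependence on initial conditions; your version avoids that auxiliary uniformity. All three variants are correct and amount to the same theorem proved by the same strategy.
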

  The proof is given in Section \ref{sec:proof_interchange}.\\

Putting everything together, we conclude that when $n$ is large, the fluid model is an accurate approximation to the stochastic system, for both the transient regime (Theorems \ref{thm:fluid_limit} and \ref{thm:equilibrium}) and the steady-state regime (Theorem \ref{prop:interchange}). The relationship between the convergence results is depicted in the commutative diagram of Figure \ref{fig:limits_diagram}.

\begin{figure}[ht!]
  \centering
  \begin{tikzpicture}[scale=1]
    \draw (0,0) node {$\pi^n_s$};
    \draw [<-,thick] (0,0.4) -- (0,2.6);
    \draw (1.5,0.3) node {Thm. \ref{prop:interchange}};
    \draw (1.5,-0.2) node {$n\to\infty$};
    \draw [->,thick] (0.4,0) -- (2.6,0);
    \draw (-0.8,1.7) node {Prop. \ref{prop:ergodicity}};
    \draw (-0.8,1.3) node {$t\to\infty$};
    \draw (0,3) node {$S^n(t)$};
    \draw [->,thick] (0.4,3) -- (2.6,3);
    \draw (3.75,1.7) node {Thm. \ref{thm:equilibrium}};
    \draw (3.75,1.3) node {$t\to\infty$};
    \draw (3,3) node {$s(t)$};
    \draw [->,thick] (3,2.6) -- (3,0.4);
    \draw (1.5,3.3) node {Thm. \ref{thm:fluid_limit}};
    \draw (1.5,2.8) node {$n\to\infty$};
    \draw (3,0) node {$s^*$};
  \end{tikzpicture}
  \caption{Relationship between the stochastic system and the fluid model.}
  \label{fig:limits_diagram}
\end{figure}

\subsection{Asymptotic  delay and phase transitions}\label{sec:comparison}

In this section we use the preceding results to conclude that in two of the regimes considered, the asymptotic delay is zero. For the third regime, the asymptotic delay is positive and we examine its dependence on various policy parameters.

\subsubsection{Queueing delay}

Having shown that we can approximate the stochastic system by its fluid model for large $n$, we can analyze the equilibrium of the latter to approximate the queueing delay under our policy.

For any given $n$, we define the  {\bf queueing delay} (more precisely, the waiting time) of a job,  generically denoted by $\mathbb{E}\left[W^n\right]$, as the mean time that a job spends in queue until its service starts. Here the expectation is taken with respect to the steady-state distribution, whose existence and uniqueness is guaranteed by Proposition \ref{prop:ergodicity}. Then, the {\bf asymptotic delay} is defined as
 \[ \mathbb{E}[W] \triangleq \limsup_{n\to\infty} \mathbb{E}\left[W^n\right]. \]
This asymptotic delay can be obtained from the equilibrium $s^*$ of the fluid model as follows. For a fixed $n$, the expected number of jobs in the system in steady-state is
\[ \mathbb{E}\left[  \sum\limits_{i=1}^{\infty} n S_i^n \right]. \]
Furthermore, the delay of a job is equal to the total time it spends in the system minus the expected service time (which is $1$). Using Little's Law, we obtain that the queueing delay is
\begin{align*}
  \mathbb{E}\left[ W^n \right] &= \frac{1}{\lambda n} \mathbb{E}\left[  \sum\limits_{i=1}^{\infty} n S_i^n \right] -1 = \frac{1}{\lambda}  \mathbb{E}\left[ \sum\limits_{i=1}^{\infty} S_i^n \right] -1.
\end{align*}
Taking the limit as $n\to\infty$, and interchanging the limit, summation, and expectation, we obtain
\begin{equation}\label{eq:delay_formula}
  \mathbb{E}\left[ W \right] = \frac{1}{\lambda} \left( \sum\limits_{i=1}^{\infty} s_i^* \right) -1.
\end{equation}
The validity of these interchanges is established in Appendix \ref{app:delay}.

As a corollary, we obtain that if we have a superlinear message rate or a superlogarithmic number of memory bits, the RCPB policy results in zero asymptotic delay.

\begin{corollary}
  For the High Memory regime with $\mu\geq\lambda/(1-\lambda)$, and for the High Message regime, the asymptotic delay is zero, i.e., $\mathbb{E}[W]=0$.
\end{corollary}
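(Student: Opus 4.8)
The plan is to obtain the conclusion by simply evaluating the delay formula \eqref{eq:delay_formula} at the fluid equilibrium $s^*$ described in Theorem \ref{thm:equilibrium}, so the entire argument reduces to an arithmetic substitution once that theorem and the interchange justification of Appendix \ref{app:delay} are in hand.

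First I would observe that in both regimes under consideration the equilibrium escape probability satisfies $P_0^*=0$. For the High Message regime this is exactly part (ii) of Theorem \ref{thm:equilibrium}. For the High Memory regime, the standing hypothesis $\mu\geq\lambda/(1-\lambda)$ is equivalent to $\mu(1-\lambda)/\lambda\geq 1$, so that $P_0^*=\left[1-\mu(1-\lambda)/\lambda\right]^+=0$ by part (i) of the same theorem (this is precisely the content of the Remark immediately following it). Next, substituting $P_0^*=0$ into the equilibrium expression $s_i^*=\lambda(\lambda P_0^*)^{i-1}$ and using the convention $0^0=1$ gives $s_1^*=\lambda$ and $s_i^*=0$ for every $i\geq 2$. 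Intuitively, this equilibrium says that in the fluid limit a fraction $1-\lambda$ of the queues are empty, no queue ever holds two or more jobs, and hence every arriving job is immediately routed to an idle server; so a zero asymptotic delay is exactly what one should expect. Consequently $\sum_{i=1}^{\infty}s_i^*=\lambda$, and plugging this into \eqref{eq:delay_formula} yields
\[
\mathbb{E}[W]=\frac{1}{\lambda}\left(\sum_{i=1}^{\infty}s_i^*\right)-1=\frac{1}{\lambda}\cdot\lambda-1=0.
\]
Since $\mathbb{E}[W^n]\geq 0$ for every $n$, the $\limsup$ defining $\mathbb{E}[W]$ is automatically nonnegative, so this computation pins it down to $0$.

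I do not expect any real obstacle: the only delicate point in the chain is the interchange of limit, summation, and expectation needed to pass from $\mathbb{E}[W^n]=\frac{1}{\lambda}\mathbb{E}\big[\sum_{i\geq 1}S_i^n\big]-1$ to \eqref{eq:delay_formula}, and that is handled separately in Appendix \ref{app:delay}. Granting it, the corollary is an immediate consequence of the equilibrium characterization in Theorem \ref{thm:equilibrium}, with the only thing to check being that the hypothesis $\mu\geq\lambda/(1-\lambda)$ forces $P_0^*=0$ in the High Memory case.
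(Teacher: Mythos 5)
Your argument is correct and is exactly the paper's proof: note $P_0^*=0$ in both regimes (for High Memory because $\mu\geq\lambda/(1-\lambda)$ kills the $[\cdot]^+$), deduce $s_1^*=\lambda$ and $s_i^*=0$ for $i\geq 2$, and substitute into Equation \eqref{eq:delay_formula}. The extra remarks about the interchange from Appendix \ref{app:delay} and the intuitive reading of the equilibrium are fine additions but do not change the route.
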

\begin{proof}
 From Theorem \ref{thm:equilibrium}, we have $P_0^*=0$ and therefore, $s_1^*=\lambda$ and $s_i^*=0$, for $i\geq 2$. The result follows from Equation \eqref{eq:delay_formula}.
\end{proof}

\subsubsection{The asymptotic delay in the Constrained regime} $ $
According to Equation \eqref{eq:delay_formula} and Theorem \ref{thm:equilibrium}, the asymptotic delay is given by
\begin{equation}
 \mathbb{E}[W]=\frac{1}{\lambda}\sum\limits_{i=1}^{\infty} s_i^* -1= \sum\limits_{i=1}^{\infty} (\lambda P_0^*)^{i-1} -1 = \frac{\lambda P_0^*}{1-\lambda P_0^*}, \label{eq:delay0}
 \end{equation}
and is positive in the Constrained regime. Nevertheless, the dependence of the delay on the various parameters has some remarkable properties, which we proceed to study.

Suppose  that the message rate of each idle server is $\mu=\alpha/(1-\lambda)$ for some constant $\alpha> 0$. Since a server is idle (on average) a fraction $1-\lambda$ of the time, the resulting average message rate at each server is $\alpha$, and the overall (system-wide) average message rate is $\alpha n$. We can rewrite the equilibrium probability $P^*_0$ in Theorem \ref{thm:equilibrium} as
\[ P_0^*=\left[ 1 + \frac{\alpha}{\lambda} + \cdots +  \left( \frac{\alpha}{\lambda}\right)^c \right]^{-1}. \]
This, together with Equation \eqref{eq:delay0} and some algebra, implies that
\begin{equation}\label{eq:delay}
  \mathbb{E}[W] = \lambda \left[ 1 - \lambda + \frac{\alpha}{\lambda} + \cdots +  \left( \frac{\alpha}{\lambda}\right)^c \right]^{-1}.
\end{equation}

\paragraph{Phase transition of the delay for $\lambda\uparrow 1$}
We have a phase transition between $\alpha=0$ (which corresponds to uniform random routing) and $\alpha>0$. In the first case, we have the usual M/M/1-queue delay: $\lambda/(1-\lambda)$. However, when $\alpha>0$, the delay is upper bounded uniformly in $\lambda$ as follows:
\begin{equation}
  \mathbb{E}[W]\leq \left( \sum\limits_{k=1}^{c} \alpha^k \right)^{-1}.
\end{equation}
This is established by noting that the expression in Equation
\eqref{eq:delay} is monotonically increasing in $\lambda$ and then setting $\lambda=1$.
Note that when $\alpha$ is fixed, the total message rate is the same, $\alpha n$, for all $\lambda<1$. This is a key qualitative improvement over all other resource constrained policies in the literature; see our discussion of the power-of-$d$-choices policy at the end of this subsection.

\paragraph{Phase transition in the memory-delay tradeoff}
When $\lambda$ and $\alpha$ are held fixed, the asymptotic delay in Equation \eqref{eq:delay} decreases with $c$. This represents a tradeoff between the asymptotic delay $\mathbb{E}[W]$, and the number of memory bits, which is equal to $\lceil c\log_2(n)\rceil$ for the Constrained regime. However, the rate at which the delay decreases with $c$ depends critically on the value of $\alpha$, and we have a phase transition when $\alpha=\lambda$.
\begin{itemize}
  \item [(i)] If $\alpha<\lambda$, then
  \[ \lim\limits_{c\to\infty} \mathbb{E}[W]=\frac{\lambda(\lambda-\alpha)}{(1-\lambda)(\lambda-\alpha)+1}. \]
Consequently, if $\alpha<\lambda$, it is impossible to drive the delay to $0$ by increasing the value of $c$, i.e., by increasing the amount of memory available.
  \item [(ii)] If $\alpha=\lambda$, we have
  \[ \mathbb{E}[W]=\frac{1}{1-\lambda + c} \leq \frac{1}{c},\]
  and thus the delay converges to $0$ at the rate of $1/c$, as $c\to\infty$.
  \item [(iii)] If $\alpha>\lambda$, we have
\begin{equation}   \label{eq:expo}
 \mathbb{E}[W]=\lambda \left[ 1 - \lambda + \frac{\alpha}{\lambda} + \cdots +  \left( \frac{\alpha}{\lambda}\right)^c \right]^{-1} \leq \left( \frac{\lambda}{\alpha} \right)^{c},
\end{equation}
   and thus the delay converges exponentially fast to $0$, as $c\to\infty$.
\end{itemize}
This phase transition is due to the fact that the queueing delay depends critically on $P_0^*$, the probability that there are no tokens left in the dispatcher's virtual queue. In equilibrium, the number of tokens in the virtual queue evolves as a birth-death process with birth rate $\alpha$, death rate $\lambda$, and maximum population $c$, and has an invariant distribution which is geometric with ratio $\alpha/\lambda$. As a result,
as soon as $\alpha$ becomes larger than $\lambda$, this birth-death process has an upward drift, and the probability of being at state 0 (no tokens present) decays exponentially with the size of its state space. This argument captures the essence of the phase transition  at $\mu=\lambda/(1-\lambda)$ for the High Memory regime.

\paragraph{Comparison with the power-of-$d$-choices}
The power-of-$d$-choices policy que\-ries $d$ random servers at the time of each arrival and sends the arriving job to the shortest of the queried queues. As such, it involves $2\lambda dn$ messages per unit time. For a fair comparison, we compare this policy to our RCPB policy with $\alpha=2\lambda d$, so that the two policies have the same average message rate.

The asymptotic delay for the power-of-$d$-choices policy was shown in \cite{mitzenmacher,vvedenskaya} to be
\[ \mathbb{E}[W_{\textrm{Pod}}] = \sum\limits_{i=1}^{\infty} \lambda^{\frac{d^i-d}{d-1}} -1 \geq \lambda^d. \]
Thus, the delay decreases at best exponentially with $d$, much like the delay decreases exponentially with $c$ in our scheme (cf.~Equation \eqref{eq:expo}). However, increasing $d$ increases the number of messages sent, unlike our policy where the average message rate remains fixed at $\alpha n$.

Furthermore, the asymptotic delay in the power-of-$d$-choices when $\lambda\uparrow 1$ is shown in \cite{mitzenmacher} to satisfy
\[ \lim_{\lambda\uparrow 1} \frac{\mathbb{E}[W_{\textrm{Pod}}]}{\log \left( \frac{1}{1-\lambda} \right)} = \frac{1}{\log d}. \]
For any fixed $d$, this is an exponential improvement over the delay of randomized routing, but the delay is still unbounded as $\lambda\uparrow 1$. In contrast, the delay of our scheme has a constant upper bound, independent of $\lambda$.

In conclusion, if we set $\alpha=2d\lambda$, so that our policy and the power-of-$d$ policy use the same number of messages per unit of time, our policy results in much better asymptotic delay, especially when $\lambda\uparrow 1$, even if $c$ is as small as $1$.

\paragraph{Numerical results}
We implemented three policies in Matlab: the power-of-$2$-choices \cite{mitzenmacher,vvedenskaya}, our RCPB policy, and the PULL policy \cite{stolyar14}. We evaluate the algorithms in a system with $500$ servers. In our algorithm we used $c=2$, and $\alpha=\lambda$, so it has the same average message rate as the PULL policy ($500\lambda$ messages per unit of time), which is $4$ times less than what the power-of-$2$-choices utilizes. In Figure \ref{fig:queueingDelay} we plot the delay as a function of $\log\left( 1/(1-\lambda) \right)$.

\begin{figure}[ht!]
  \centering
  \includegraphics[width=0.75\textwidth]{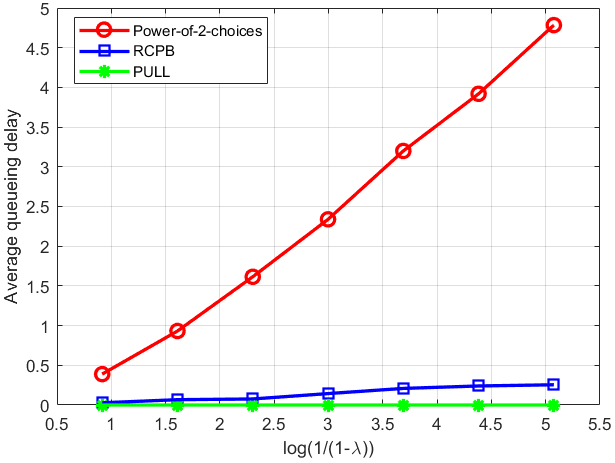}
        \caption{Average delay of the power-of-$2$-choices policy (red circles) vs. our policy (blue squares) vs. PULL (green asterisks).}
        \label{fig:queueingDelay}
\end{figure}

As expected, the delay remains uniformly bounded under our RCPB policy (blue squares). This is achieved with only $\lceil2\log_2(500)\rceil=18$ bits of memory. Furthermore, with this small amount of memory we are also close to the performance of the PULL algorithm, which requires $500$ bits of memory.

\section{Fluid model analysis --- Proof of part of Theorem \ref{thm:equilibrium}}\label{sec:proof_equilibrium}

The proof of Theorem \ref{thm:equilibrium} involves mostly deterministic arguments; these are developed in Lemmas \ref{lem:existence} and \ref{lem:equilibrium}, and Proposition \ref{prop:asymptitic_stability}, which establish uniqueness of fluid solutions, existence and uniqueness of a fluid-model equilibrium, and asymptotic stability, respectively. The proof of existence of fluid solutions relies on a stochastic argument and is developed in Section \ref{sec:proof_fluid}, in parallel with the proof of Theorem \ref{thm:fluid_limit}.

\subsection{Uniqueness of solutions}

\begin{lemma}\label{lem:existence}
If there exists a fluid solution (cf. Definition \ref{def:fluid_model}) with initial condition $s^0\in\mathcal{S}^1$, it is unique.
\end{lemma}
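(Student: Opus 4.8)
The plan is to establish uniqueness via a Grönwall-type argument in the weighted Hilbert space $(\ell^2_w,\|\cdot\|_w)$. Suppose $s(t)$ and $\tilde s(t)$ are two fluid solutions with the same initial condition $s^0\in\mathcal{S}^1$. I would write down the integrated form of the dynamics \eqref{eq:drift1}--\eqref{eq:drift2}, so that for each $i\geq 1$,
\[ s_i(t)-\tilde s_i(t) = \int_0^t \big(F_i(s(\tau))-F_i(\tilde s(\tau))\big)\,d\tau, \]
where $F_i$ denotes the right-hand side of the appropriate drift equation. The goal is to bound $\|s(t)-\tilde s(t)\|_w$ by $C\int_0^t \|s(\tau)-\tilde s(\tau)\|_w\,d\tau$ for some finite constant $C$ (depending on the regime, on $\lambda$, $\mu$, $c$), and then conclude $s\equiv\tilde s$ by Grönwall's inequality.

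The key step is therefore a Lipschitz estimate. First I would check that the ``free'' part of the drift — the terms $\lambda(s_{i-1}-s_i)$, $-(s_i-s_{i+1})$, and the analogous terms in \eqref{eq:drift1} — defines a bounded linear operator on $\ell^2_w$; this is a routine computation using that the shift operators are bounded with respect to the $2^{-i}$ weights (a coordinate shift changes $\|\cdot\|_w^2$ by at most a factor of $2$). Second, and more delicately, I would establish that $s\mapsto P_0(s)$ is Lipschitz as a function of $s_1$ (indeed of $\|s-\tilde s\|_w$, since $|s_1-\tilde s_1|\le \|s-\tilde s\|_w$), uniformly on $\mathcal S$. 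For the High Memory regime $P_0(s)=[1-\mu(1-s_1)/\lambda]^+$ is manifestly $(\mu/\lambda)$-Lipschitz in $s_1$; for the Constrained regime $P_0(s)=\big(\sum_{k=0}^c(\mu(1-s_1)/\lambda)^k\big)^{-1}$ is a smooth function of $s_1\in[0,1]$ with bounded derivative (the denominator is $\geq 1$); for the High Message regime $P_0(s)=[1-(1-s_2)/\lambda]^+\mathds 1_{\{s_1=1\}}$ requires a little more care because of the indicator, but one can argue that if $s_1(t)<1$ then both solutions have $P_0=0$ in a neighborhood, while the constraint $s_0\equiv 1$ together with monotonicity pins down the behavior at $s_1=1$; alternatively one shows directly that along any fluid solution the relevant products $(s_{i-1}-s_i)P_0(s)$ are Lipschitz in the state. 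Since $P_0$ appears multiplied by coordinates that lie in $[0,1]$, and the coordinates themselves are being perturbed, I would split each difference $F_i(s)-F_i(\tilde s)$ into a term where $P_0$ is held fixed (handled by the bounded-operator estimate) plus a term proportional to $|P_0(s)-P_0(\tilde s)|$ (handled by the Lipschitz bound on $P_0$), and then sum the squares against the weights $2^{-i}$.

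I expect the main obstacle to be the High Message regime, precisely because $P_0$ is discontinuous at $s_1=1$: a naive Lipschitz bound fails there. The fix I would pursue is to observe that the equation for $s_1$, namely $\frac{ds_1}{dt}=\lambda(1-P_0)+\lambda(1-s_1)P_0-(s_1-s_2)$, keeps $s_1(t)=1$ only on a relatively closed set, and on the complement $P_0\equiv0$ so the dynamics are unambiguously Lipschitz; on the set where $s_1=1$, the drift of $s_1$ must be nonnegative, which forces $s_2\leq 1-\lambda(1-P_0)\le 1$, and one can read off a consistent one-sided value of $P_0$ — effectively $P_0$ acts as the (unique) quantity making $ds_1/dt$ admissible, so uniqueness of $s_1$, hence of $P_0(s(\cdot))$ as a time function, still holds. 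Once $P_0(s(\cdot))$ is shown to be the same measurable function of time for both solutions, the remaining equations \eqref{eq:drift2} are linear and Grönwall closes the argument. A secondary technical point is justifying that fluid solutions stay in $\mathcal S^1$ (so that all sums converge and the coordinates stay in $[0,1]$), which is part of the definition and can be invoked directly.
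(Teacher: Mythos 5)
For the High Memory and Constrained regimes your plan is essentially the paper's: show $P_0$ (and hence the drift $F$) is Lipschitz in $\|\cdot\|_w$ on $\mathcal{S}^1$, then get uniqueness by a Gr\"onwall/Picard--Lindel\"off argument. (The paper routes through Kirszbraun's extension theorem to quote Picard--Lindel\"off off the shelf, but that is a cosmetic convenience, not a substantive difference from the direct Gr\"onwall estimate you propose.)

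The genuine gap is in the High Message regime. You correctly identify the difficulty ---\ the discontinuity of $P_0$ on $D=\{s\in\mathcal{S}^1: s_1=1,\, s_2>1-\lambda\}$ ---\ but the patch you sketch does not close it, for three reasons. First, a sign slip: at a regular time with $s_1(t)=1$ the constraint $s_1\leq 1$ forces $\dot{s}_1(t)\leq 0$, not $\geq 0$; what you obtain is $\lambda(1-P_0)\leq 1-s_2$, i.e.\ $P_0\geq 1-(1-s_2)/\lambda$, which is a \emph{one-sided} bound and does not by itself pin down $P_0$. Second, and more seriously, the step ``uniqueness of $s_1$, hence of $P_0(s(\cdot))$'' is a non sequitur in this regime, because $P_0(s)=[1-(1-s_2)/\lambda]^+\mathds{1}_{\{s_1=1\}}$ also depends on $s_2$; knowing the trajectory of the first coordinate does not determine the driving term. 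Third, the core of the matter ---\ why two solutions that reach $s_1=1$ cannot subsequently diverge ---\ is left unaddressed. The paper's proof isolates exactly this as Claim~\ref{cla:trajectories}: (i) a trajectory in $D$ can exit only through $\overline{D}\setminus D$ (i.e.\ with $s_2=1-\lambda$), never directly into $\{s_1<1\}$; and (ii) a trajectory starting in $\overline{D}\setminus D$ can never re-enter $D$. Part (ii) is where the real work lies: supposing $s_2$ rises above $1-\lambda$, one propagates this up the coordinates ($s_3>1-\lambda$ at an earlier regular time, then $s_4$, and so on), eventually contradicting summability $s\in\mathcal{S}^1$. Nothing in your sketch supplies this, and without it one cannot exclude a branching where one solution stays pinned at $s_1=1$ while another peels off.

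Once those two structural facts about $D$ are established, the remainder is indeed what you anticipate: concatenate at most three segments (approach to $\overline{D}$; sojourn in $D$ until hitting $\overline{D}\setminus D$; then evolution in $\mathcal{S}^1\setminus D$ forever), on each of which the restricted drift is Lipschitz and Gr\"onwall gives uniqueness. So the architecture of your argument matches the paper, but Claim~\ref{cla:trajectories}(ii), the hardest piece, is missing.
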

\begin{proof}
The fluid model is of the form $\dot{s}(t)=F\big(s(t)\big)$, where the function $F:\mathcal{S}^1\to[-1,\lambda]^{\mathbb{Z}_+}$ is defined by
  \begin{align}
     F_0(s)=&0, \nonumber \\
     F_1(s)=&\lambda\big(1-P_0(s)\big)+\lambda(1-s_1)P_0(s)-(s_1-s_2), \label{eq:driftS1} \\
     F_i(s)=& \lambda(s_{i-1}-s_i)P_0(s)-(s_i-s_{i+1}), \quad\quad \forall \, i\geq 2, \nonumber
\end{align}
and where $P_0(s)$ is given for the three regimes by:
\begin{itemize}
  \item [(i)] High Memory: $\quad P_0(s) = \left[ 1-\frac{\mu(1-s_1)}{\lambda} \right]^+$.
  \item [(ii)] High Message: $\quad P_0(s) = \left[ 1-\frac{1-s_2}{\lambda} \right]^+\mathds{1}_{\{s_1=1\}}$.
  \item [(iii)] Constrained: $\quad P_0(s)=\left[ \sum\limits_{k=0}^{c} \left( \frac{\mu(1-s_1)}{\lambda} \right)^k \right]^{-1}$.
\end{itemize}

The function $P_0(s)$ for the High Memory regime is continuous and piecewise linear in $s_1$, so it is Lipschitz continuous in $s$, over the set $\mathcal{S}^1$. Similarly, $P_0(s)$ for the Constrained regime is also Lipschitz continuous in $s$, because $P_0(s)$ is a rational function of $s_1$ and the denominator is lower bounded by $1$. However, $P_0(s)$ for the High Message regime is only Lipschitz continuous ``almost everywhere" in $\mathcal{S}^1$; more precisely, it is Lipschitz continuous everywhere except on the lower dimensional set
\[D\triangleq \left\{s\in\mathcal{S}^1:s_1=1 \text{ and } s_2>1-\lambda \right\}.\]
Moreover, $P_0(s)$ restricted to $D$ is also Lipschitz continuous.

Suppose that $P_0(s)$ is Lipschitz continuous with constant $L$ on some subset $\mathcal{S}_0$ of $\mathcal{S}^1$. Then, for every $s,s'\in\mathcal{S}_0$ and any $i\geq 1$, we have
  \begin{align*}
    \left|F_i(s)-F_i(s')\right| &= \left|-\lambda P_0(s)\mathds{1}_{i=1}+\lambda(s_{i-1}-s_{i})P_0(s)-(s_i-s_{i+1}) \right. \\
  & \left. \quad\quad\quad +\lambda P_0\left(s'\right)\mathds{1}_{i=1} -\lambda\left(s'_{i-1}-s'_i\right)P_0\left(s'\right) + \left(s'_i-s'_{i+1}\right)\right| \\
    &\leq \left|P_0(s)-P_0(s')\right| + \left|(s_{i-1}-s_i)P_0(s)-\left(s'_{i-1}-s'_i\right)P_0(s')\right| \\
& \quad\quad\quad + \left|s_i-s'_i\right| + \left|s_{i+1}-s'_{i+1}\right| \\
    &\leq 2\left|P_0(s)-P_0(s')\right| + \left|s_{i-1}-s'_{i-1}\right| + 2\left|s_i-s'_i\right| \\
& \quad\quad\quad+ \left|s_{i+1}-s'_{i+1}\right| \\
&\leq 2L \|s-s'\|_w + \left|s_{i-1}-s'_{i-1}\right| + 2\left|s_i-s'_i\right|+ \left|s_{i+1}-s'_{i+1}\right|.
  \end{align*}
Then,
  \begin{align*}
    &\left\|F(s)-F(s')\right\|_w= \sqrt{ \sum\limits_{i=0}^\infty \frac{\left|F_i(s)-F_i(s')\right|^2}{2^i} }
\end{align*}
\begin{align*}
&\leq \sqrt{ \sum\limits_{i=1}^\infty \frac{\Big(2L \|s-s'\|_w + \left|s_{i-1}-s'_{i-1}\right| + 2\left|s_i-s'_i\right|+ \left|s_{i+1}-s'_{i+1}\right|\Big)^2}{2^i} }\\
&\leq \sqrt{ 12 \sum\limits_{i=1}^\infty \frac{4L^2 \|s-s'\|_w^2 + \left|s_{i-1}-s'_{i-1}\right|^2 + 4\left|s_i-s'_i\right|^2+ \left|s_{i+1}-s'_{i+1}\right|^2}{2^i} } \\
    &\leq  \|s-s'\|_w  \sqrt{12( 4L^2 +2+4+1)},
  \end{align*}
where the second inequality comes from the fact that $(w+x+y+z)^2\leq 12(w^2+x^2+y^2+z^2)$, for all $(w,x,y,z)\in\mathbb{R}^4$. This means that $F$ is also Lipschitz continuous on the set $\mathcal{S}_0$.

For the High Memory and Constrained regimes, we can set $\mathcal{S}_0 = \mathcal{S}^1$, and by the preceding discussion, $F$ is Lipschitz continuous on $\mathcal{S}^1$. At this point we cannot immediately guarantee the uniqueness of solutions because $F$ is just Lipschitz continuous on a subset ($\mathcal{S}^1$) of the Hilbert space $(\ell^2_w,\|\cdot\|_w)$. However, we can use Kirszbraun's theorem \cite{LipschitzExtension} to extend $F$ to a Lipschitz continuous function $\overline{F}$ on the entire Hilbert space. If we have two different solutions to the equation $\dot{s}=F(s)$ which stay in $\mathcal{S}^1$, we would also have two different solutions to the equation $\dot{s}=\overline{F}(s)$. Since $\overline{F}$ is Lipschitz continuous, this would contradict the Picard-Lindel\"{o}ff uniqueness theorem \cite{picard}. This establishes the uniqueness of fluid solutions for the High Memory and Constrained regimes.

Note that the preceding argument can also be used to show uniqueness of solutions for any differential equation with a Lipschitz continuous drift in an arbitrary subset of the Hilbert space $(\ell^2_w,\|\cdot\|_w)$, as long as we only consider solutions that stay in that set. This fact will be used in the rest of the proof.

From now on, we concentrate on the High Message regime. In this case,
the drift $F(s)$ is Lipschitz continuous only ``almost everywhere," and a solution will in general be non-differentiable. In particular, results on the uniqueness of classical (differentiable) solutions do not apply. Our proof will rest on the fact that non-uniqueness issues can only arise when a trajectory hits the closure of the set where the drift $F(s)$ is not Lipschitz continuous, which in our case is just the closure of $D$:
\[ \overline{D} = \left\{s\in\mathcal{S}^1:s_1=1 \text{ and } s_2 \geq 1-\lambda \right\}. \]
We now partition the space $\mathcal{S}^1$ into three subsets, $\mathcal{S}^1\backslash \overline{D}$, $D$, and $\overline{D}\backslash D$, and characterize the behavior of potential trajectories depending on the initial condition. Note that we only consider fluid solutions, and these always stay in the set $\mathcal{S}^1$, by definition. Therefore, we only need to establish the uniqueness of solutions that stay in $\mathcal{S}^1$.

\begin{claim}\label{cla:trajectories}
For any fluid solution $s(t)$ in the High Message regime, and with initial condition $s^0\in\overline{D}$, we have the following.
\begin{itemize}
  \item [i)] If $s^0 \in D$, then $s(t)$ either stays in $D$ forever or hits $\overline{D} \backslash D$ at some finite time. In particular, it cannot go directly from $D$ to $\mathcal{S}^1\backslash \overline{D}$.
  \item [ii)] If $s^0 \in \overline{D}\backslash D$, then $s(t)$ stays in $\mathcal{S}^1\backslash D$ forever. In particular, it can never return to $D$.
\end{itemize}
\end{claim}
\begin{proof}$ $
\begin{itemize}
\item [i)] Suppose that $s^0\in D$, i.e., $s^0_1=1$ and $s^0_2>1-\lambda$. Let $t_{D^c}$ be the exit time from $D$, and suppose that it is finite. Note that, by continuity of solutions, $s_1(t_{D^c})=1$. We will show that $s_2(t_{D^c})=1-\lambda$, so that the trajectory hits $\overline{D} \backslash D$. Suppose, in order to derive a contradiction, that this is not the case and, therefore, $s_2(t_{D^c})>1-\lambda$. Then, due to the continuity of solutions, there exists some time $t_1>t_{D^c}$ such that $s_1(t_1)<1$ and $s_2(t)>1-\lambda$, for all $t\in[t_{D^c},t_1]$. Let
    \[ t_0\triangleq \sup\{t\leq t_1: s_1(t)=1\} \]
    be the last time before $t_1$ that $s_1(t)$ is equal to $1$. Then we have $s_1(t_0)=1$, and $s_1(t)<1$ for all $t\in(t_0,t_1]$. Since the drift $F$ is continuous for all $s_1<1$, all times in $(t_0,t_1]$ are regular. On the other hand, for all $t\in(t_0,t_1]$, we have $s_1(t)<1$ and thus $P_0(s(t))=0$, which together with $s_2(t)>1-\lambda$ implies that
\[ \frac{ds_1(t)}{dt}=\lambda -(s_1(t)-s_2(t)) > 0, \]
for all $t\in(t_0,t_1]$. This contradicts the relations $s_1(t_1)<1=s_1(t_0)$, and establishes that $s_1(t_D)=1$. Therefore the fluid solution $s$ either stays in $D$ forever or it exits $D$ with $s_2=1-\lambda$.
\item [ii)] Suppose now that $s^0\in \overline{D}\backslash D$, i.e., $s^0_1=1$ and $s^0_2=1-\lambda$. It is enough to show that $s_2(t)\leq 1-\lambda$, for all $t\geq 0$. Let
    \[ \tau_2(\epsilon) \triangleq \min \{ t\geq 0 : s_2(t)=1-\lambda+\epsilon \} \]
    be the first time $s_2$ reaches $1-\lambda+\epsilon$. Suppose, in order to derive a contradiction, that there exists $\epsilon^*>0$ such that $\tau_2(\epsilon^*)<\infty$. Then, due to the continuity of $s_2$, we also have $\tau_2(\epsilon)<\infty$, for all $\epsilon\leq \epsilon^*$. Since $s_2$ is differentiable almost everywhere, we can choose $\epsilon$ such that $\tau_2(\epsilon)$ is a regular time with $F_2(s(\tau_2(\epsilon)))>0$. Using the expression \eqref{eq:driftS1} for $F_2$, we obtain
\begin{align*}
 0 &< \lambda\Big(s_1(\tau_2(\epsilon))-s_2(\tau_2(\epsilon))\Big)\left( 1- \frac{1-s_2(\tau_2(\epsilon))}{\lambda} \right)\mathds{1}_{\{s_1(\tau_2(\epsilon))=1\}} \\
 & \qquad\qquad\qquad\qquad\qquad\qquad\qquad\qquad\qquad -\Big(s_2(\tau_2(\epsilon))-s_3(\tau_2(\epsilon))\Big) \\
 &\leq \lambda\Big(1-s_2(\tau_2(\epsilon))\Big)\left( 1- \frac{1-s_2(\tau_2(\epsilon))}{\lambda} \right)-\Big(s_2(\tau_2(\epsilon))-s_3(\tau_2(\epsilon))\Big) \\
 &= \lambda - 1 +s_3(\tau_2(\epsilon)) +s_2(\tau_2(\epsilon))\Big(1-\lambda-s_2(\tau_2(\epsilon))\Big) \\
 &< \lambda - 1 +s_3(\tau_2(\epsilon)),
\end{align*}
or $s_3(\tau_2(\epsilon))>1-\lambda$. On the other hand, we have $s_3(0)\leq s_2(0)=1-\lambda$. Combining these two facts, we obtain that $s_3(\tau_2(\epsilon))>s_3(0)$, i.e., that $s_3$ increased between times $0$ and $\tau_2(\epsilon)$. As a result, and since $s_3$ is differentiable almost everywhere, there exists another regular time $\tau_3(\epsilon)\leq \tau_2(\epsilon)$ such that $s_3(\tau_3(\epsilon))>1-\lambda$ and $F_3(s(\tau_3(\epsilon)))>0$. Proceeding inductively, we can obtain a sequence of nonincreasing regular times $\tau_2(\epsilon)\geq \tau_3(\epsilon)\geq \cdots \geq 0$ such that $s_k(\tau_k(\epsilon))>1-\lambda$, for all $k\geq 2$. Let $\tau_\infty(\epsilon)$ be the limit of this sequence of regular times. Since all coordinates of the fluid solutions are Lipschitz continuous with the same constant $L$, we have
\[ s_k(\tau_\infty)>1-\lambda-L(\tau_k(\epsilon)-\tau_\infty), \]
for all $k\geq 2$. Since $\tau_k(\epsilon)\to\tau_\infty$, there exists some $k^*\geq 2$ such that $s_k(\tau_\infty)>(1-\lambda)/2>0$, for all $k\geq k^*$. But then,
\[ \|s(\tau_\infty)\|_1 \geq \sum_{k=k^*}^\infty \frac{1-\lambda}{2} = \infty. \]
This contradicts the fact that $s(\tau_\infty)\in\mathcal{S}^1$, and it follows that we must have $s_2(t)\leq 1-\lambda$ for all $t\geq 0$.
\end{itemize}
\end{proof}

The uniqueness of a solution over the whole time interval $[0,\infty)$ for the High Message regime can now be obtained by concatenating up to three unique trajectories, depending on the initial condition $s^0$.
\begin{itemize}
\item [a)] Suppose that $s^0\in\mathcal{S}^1\backslash \overline{D}$, and let $t_{\overline{D}}$ be the hitting time of $\overline{D}$, i.e.,
\[ t_{\overline{D}} = \inf \left\{t\geq 0 : s(t)\in \overline{D} \text{ with } s(0)=s^0 \right\}. \]
Since $F|_{\mathcal{S}^1\backslash \overline{D}}$ (the restriction of the original drift $F$ to the set $\mathcal{S}^1\backslash \overline{D}$) is Lipschitz continuous, we have the uniqueness of a solution over the time interval $[0,t_{\overline{D}})$, by using the same argument as for the other regimes. If $t_{\overline{D}}=\infty$, then we are done. Otherwise, we have $s(t_{\overline{D}})\in \overline{D}$; the uniqueness of a solution over the time interval $[t_{\overline{D}},\infty)$ will immediately follow from the uniqueness of a solution with initial condition in $\overline{D}$.
\item [b)] Suppose that $s^0\in D$. Due to part i) of Claim \ref{cla:trajectories}, a solution can only exit the set $D$ by hitting $\overline{D}\backslash D$, and never by going back directly into $\mathcal{S}^1\backslash\overline{D}$. Let $t_{\overline{D}\backslash D}$ be the hitting time of $\overline{D}\backslash D$. Since $F|_D$ is Lipschitz continuous, we have uniqueness of a solution over the time interval $[0,t_{\overline{D}\backslash D})$. As in case a), if $t_{\overline{D}\backslash D}=\infty$ we are done. Otherwise, the uniqueness of a solution over the time interval $[t_{\overline{D}\backslash D},\infty)$ will immediately follow from the uniqueness of a solution with initial condition in $\overline{D}\backslash D$.
\item [c)] Suppose that $s^0\in \overline{D}\backslash D$. Due to part ii) of Claim \ref{cla:trajectories}, a solution stays in $\mathcal{S}^1\backslash D$ forever. As a result, since $F|_{\mathcal{S}^1\backslash D}$ is Lipschitz continuous, uniqueness follows.
\end{itemize}
\end{proof}

The intuition behind the preceding proof, for the High Message regime, is as follows. A non-differentiable solution may arise if the system starts with a large fraction of the servers having at least two jobs. In that case, the rate $s_1(t)-s_2(t)$ at which the servers become idle is smaller than the rate $\lambda$ at which idle servers become busy. As a consequence, the fraction $s_1(t)$ of busy servers increases until it possibly reaches its maximum of $1$, and stays there until the fraction of servers with exactly one job, which is now $1-s_2(t)$, exceeds the total arrival rate $\lambda$; after that time servers become idle at a rate faster than the arrival rate. This scenario is illustrated in Figure \ref{fig:discontinuity}.

\begin{figure}[ht!]
  \centering
  \includegraphics[width=0.7\textwidth]{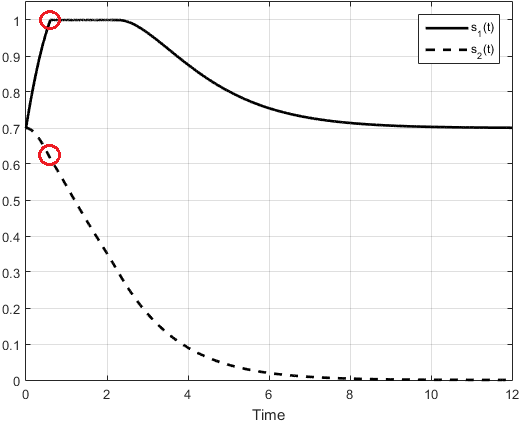}
  \caption{An example of a non-differentiable solution for the High Message regime, with $\lambda=0.9$, $s_1(0)=s_2(0)=s_3(0)=0.7$, and $s_i(0)=0$ for all $i\geq 4$. The solution is non-differentiable at the points indicated by the circles.}
  \label{fig:discontinuity}
\end{figure}

\subsection{Existence, uniqueness, and characterization of an equilibrium}

\begin{lemma} \label{lem:equilibrium}
The fluid model has a unique equilibrium $s^*\in\mathcal{S}^1$, given by
  \begin{align*}
  s_i^*=&\lambda (\lambda P_0^*)^{i-1}, \quad \forall \, i\geq 1,
\end{align*}
where $P_0^*\triangleq P_0(s^*)$ is given by
\begin{itemize}
  \item [(i)] High Memory: $P_0^* = \left[ 1-\frac{\mu(1-\lambda)}{\lambda} \right]^+$.
  \item [(ii)] High Message: $P_0^* = 0$.
  \item [(iii)] Constrained: $P_0^*=\left[ \sum\limits_{k=0}^{c} \left( \frac{\mu(1-\lambda)}{\lambda} \right)^k \right]^{-1}$.
\end{itemize}
\end{lemma}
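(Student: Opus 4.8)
The plan is to characterize \emph{every} equilibrium directly from the drift equations, and then to check that the claimed $s^{*}$ is indeed one. An equilibrium is a point $s^{*}\in\mathcal{S}^{1}$ for which the constant trajectory $s(t)\equiv s^{*}$ is a fluid solution; since the derivative of a constant is zero, this is equivalent to requiring that the right-hand sides of \eqref{eq:drift1}--\eqref{eq:drift2} vanish at $s^{*}$, i.e. $F(s^{*})=0$. Write $p\triangleq P_{0}(s^{*})$ and $a_{i}\triangleq s_{i}^{*}-s_{i+1}^{*}\geq 0$ for $i\geq 1$.

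First I would exploit the homogeneous structure of \eqref{eq:drift2}: setting its right-hand side to zero gives $\lambda(s_{i-1}^{*}-s_{i}^{*})p=s_{i}^{*}-s_{i+1}^{*}$, i.e. $a_{i}=\lambda p\,a_{i-1}$ for all $i\geq 2$, hence $a_{i}=(\lambda p)^{i-1}a_{1}$. Since $p\in[0,1]$ and $\lambda<1$ we have $\lambda p<1$, so summing the telescoping series $s_{i}^{*}=\sum_{j\geq i}a_{j}$ yields $s_{i}^{*}=\tfrac{a_{1}}{1-\lambda p}(\lambda p)^{i-1}$; in particular $s_{i}^{*}=s_{1}^{*}(\lambda p)^{i-1}$ with $s_{1}^{*}=a_{1}/(1-\lambda p)$. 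Next, setting the right-hand side of \eqref{eq:drift1} to zero and simplifying gives $s_{2}^{*}=s_{1}^{*}(1+\lambda p)-\lambda$; comparing this with the relation $s_{2}^{*}=s_{1}^{*}\lambda p$ obtained above forces $s_{1}^{*}=\lambda$, and therefore $s_{i}^{*}=\lambda(\lambda p)^{i-1}$ for all $i\geq 1$. It remains to pin down $p=P_{0}(s^{*})$: since $s_{1}^{*}=\lambda$, the formula for $P_{0}$ gives $p=[1-\mu(1-\lambda)/\lambda]^{+}$ in the High Memory regime, $p=\big[\sum_{k=0}^{c}(\mu(1-\lambda)/\lambda)^{k}\big]^{-1}$ in the Constrained regime, and (because the indicator $\mathds{1}_{\{s_{1}^{*}=1\}}$ vanishes, as $\lambda<1$) $p=0$ in the High Message regime. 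In each case $p$ is uniquely determined, which gives uniqueness.

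For existence I would run the argument in reverse: define $s^{*}$ by the displayed formulas with $P_{0}^{*}$ as stated, observe that $\lambda P_{0}^{*}<1$ so that $s^{*}\in\mathcal{S}^{1}$ (its coordinates are nonincreasing, $s_{1}^{*}=\lambda\leq 1$, and $\sum_{i}s_{i}^{*}=\lambda/(1-\lambda P_{0}^{*})<\infty$), verify the self-consistency $P_{0}(s^{*})=P_{0}^{*}$ by plugging $s_{1}^{*}=\lambda$ into the $P_{0}$ formula of the relevant regime (including the degenerate case $P_{0}^{*}=0$, where $s_{i}^{*}=0$ for $i\geq 2$), and finally substitute back into \eqref{eq:drift1}--\eqref{eq:drift2} to confirm $F(s^{*})=0$.

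The computations are elementary; the only point needing care is the High Message regime, where $P_{0}$ is discontinuous on the set $D=\{s: s_{1}=1,\ s_{2}>1-\lambda\}$, so one must make sure that no equilibrium is hidden on $\overline{D}$. This is automatic, since the algebra above forces $s_{1}^{*}=\lambda<1$ irrespective of the regime, so the discontinuity set is never reached. A secondary bookkeeping point is the $[\,\cdot\,]^{+}$ truncation in the High Memory case, which produces $P_{0}^{*}=0$ (hence an $s^{*}$ supported on queue lengths in $\{0,1\}$) exactly when $\mu\geq\lambda/(1-\lambda)$; with the convention $0^{0}=1$ this causes no difficulty.
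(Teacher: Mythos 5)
Your proof is correct and follows essentially the same approach as the paper's: set the drift $F(s^{*})$ to zero, exploit the telescoping structure of the equations for $i\geq 2$ to obtain $s_i^{*}=s_1^{*}(\lambda P_0^{*})^{i-1}$, and then use the $i=1$ equation to force $s_1^{*}=\lambda$, after which $P_0^{*}$ is pinned down by the regime-specific formula. The paper reaches the same recursion by summing the drift equations over $j\geq i$ and invoking absolute convergence to rearrange; you instead work with the increments $a_i=s_i^{*}-s_{i+1}^{*}$ and solve the linear recursion $a_i=\lambda P_0^{*}\,a_{i-1}$ explicitly before summing the geometric tail, which amounts to the same algebra. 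Your remarks about the discontinuity set in the High Message regime being unreachable (since $s_1^{*}=\lambda<1$) and about $\lambda P_0^{*}<1$ guaranteeing $s^{*}\in\mathcal{S}^{1}$ are correct and take care of the only spots where one might worry.
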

\begin{proof}
A point $s^*\in\mathcal{S}^1$ is an equilibrium if and only if
\begin{align*}
  0=& \lambda\big(1-P_0(s^*)\big)+\lambda(1-s_1^*)P_0(s^*)-(s_1^*-s_2^*),  \\
  0=& \lambda(s_{i-1}^*-s_i^*)P_0(s^*)-(s_i^*-s_{i+1}^*), \quad\quad \forall \, i\geq 2.
\end{align*}
Since $s^*\in\mathcal{S}^1$, the sum $\sum_{i=0}^\infty (s_i^*-s_{i+1}^*)$ is absolutely convergent, even when we consider all the terms separately, i.e., when we consider $s_i^*$ and $-s_{i+1}^*$ as separate terms, for each $i\geq 0$. Thus, we can obtain equivalent equilibrium conditions by summing these equations over all coordinates $j\geq i$, for any fixed $i\geq 1$. We then obtain that $s^*$ is an equilibrium if and only if
\begin{align}
  0=& \lambda\big(1-P_0(s^*)\big)+\lambda P_0(s^*)\sum_{j=1}^\infty (s_{j-1}^*-s_j^*)-\sum_{j=1}^\infty(s_j^*-s_{j+1}^*), \label{eq:eq1} \\
  0=& \lambda P_0(s^*)\sum_{j=i}^\infty (s_{j-1}^*-s_j^*)-\sum_{j=i}^\infty(s_j^*-s_{j+1}^*), \quad\quad \forall \, i\geq 2. \label{eq:eq2}
\end{align}
Since the sums are absolutely convergent, we can rearrange the terms in Equations \eqref{eq:eq1} and \eqref{eq:eq2} to obtain that $s^*\in\mathcal{S}^1$ is an equilibrium if and only if
\begin{align*}
  0=&\ \lambda-s_1^*, \\
  0=& \ \lambda P_0(s^*)s_{i-1}^*-s_i^*, \quad \forall \, i\geq 2.
\end{align*}
These conditions yield $s_1^*=\lambda<1$, and
\begin{align*}
  s_i^*=&\lambda (\lambda P_0(s^*))^{i-1}, \quad \forall \, i\geq 1,
\end{align*}
which concludes the proof.
\end{proof}

\subsection{Asymptotic stability of the equilibrium}
We will establish global asymptotic stability by sandwiching a fluid solution between two solutions that converge to $s^*$, similar to the argument in \cite{vvedenskaya}. Towards this purpose, we first establish a monotonicity result.

\begin{lemma}\label{lem:monotonicity}
  Suppose that $s^1$ and $s^2$ are two fluid solutions with $s^1(0)\geq s^2(0)$. Then $s^1(t)\geq s^2(t)$, for all $t \geq 0$.
\end{lemma}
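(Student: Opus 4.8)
The plan is to prove this monotonicity statement by a comparison/coupling argument at the level of the ODE (or integral equation) satisfied by fluid solutions, exploiting the fact that the drift $F$ has a \emph{quasi-monotone} (Kamke--Müller) structure: the drift of coordinate $i$ is nondecreasing in the off-diagonal coordinates $s_{i-1}$ and $s_{i+1}$, and also nondecreasing in $s_1$ through its effect on $P_0(s)$. Concretely, one checks from \eqref{eq:driftS1} that $\partial F_i/\partial s_j \ge 0$ for $j \neq i$ whenever these partials exist: the dependence on $s_{i-1}$ and $s_{i+1}$ is manifestly increasing, and since $P_0(s)$ is nonincreasing in $s_1$ in all three regimes (larger $s_1$ means fewer idle servers, hence a less-empty virtual queue), the term $\lambda(s_{i-1}-s_i)P_0(s)$ for $i\ge 2$ together with the $s_1$-term of $F_1$ behaves monotonically in the right direction. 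This is the standard hypothesis under which the comparison principle for cooperative systems holds, so that ordered initial conditions produce ordered trajectories.

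First I would make precise in which sense we compare. Since solutions stay in $\mathcal{S}^1$ and (in the High Message regime) may be only Lipschitz, not differentiable, I would avoid differentiating $\|(s^2-s^1)^+\|$ directly and instead argue on the closure/boundary where the drift is well-behaved, using the partition $\mathcal{S}^1 = (\mathcal{S}^1\setminus\overline D)\cup D \cup(\overline D\setminus D)$ from the proof of Lemma~\ref{lem:existence} and Claim~\ref{cla:trajectories}. On each piece the drift is Lipschitz, so the classical Kamke comparison theorem (or a Grönwall estimate applied to the positive part $g_i(t) := [s^2_i(t)-s^1_i(t)]^+$, summed with the weights $2^{-i}$ to get a finite quantity on the Hilbert space $\ell^2_w$) applies. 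The key inequality is that at a time and coordinate where $g_i>0$ but $g_{i\pm1}$, $g_1$ may be zero, the quasi-monotonicity still gives $\dot g_i \le C\sum_j 2^{-(i-j)}g_j$ for a constant $C$ absorbing $L$ and $\lambda$; hence $\frac{d}{dt}\sum_i 2^{-i} g_i(t) \le C'\sum_i 2^{-i}g_i(t)$, and since this quantity is $0$ at $t=0$, Grönwall forces it to stay $0$, i.e. $s^1(t)\ge s^2(t)$.

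The main obstacle I anticipate is the non-Lipschitz / non-differentiable behavior of the High Message drift on $\overline D$, exactly the set that caused trouble in Lemma~\ref{lem:existence}: one must make sure the comparison is not broken when one or both trajectories touch $\{s_1=1\}$, where $P_0$ jumps. I would handle this the same way the existence proof does --- concatenating the comparison over the (at most three) time intervals delimited by the hitting times of $\overline D$ and $\overline D\setminus D$, using Claim~\ref{cla:trajectories} to control how each trajectory enters and leaves these sets, and using continuity of the solutions to patch the ordering across the boundary times. A secondary, more routine technical point is justifying the termwise differentiation / summation in $\ell^2_w$, which is legitimate because the $g_i$ are uniformly Lipschitz (all coordinates share the constant $L$ from Lemma~\ref{lem:existence}) and the weights $2^{-i}$ make the series and its derivative converge uniformly; I would state this but not belabor it.
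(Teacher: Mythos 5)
Your approach---viewing the system as a cooperative (quasi-monotone) ODE in the sense of Kamke--M\"uller and running a weighted Gr\"onwall estimate on the positive parts $g_i(t) = [s_i^2(t)-s_i^1(t)]^+$---is genuinely different from the paper's. The paper instead first reduces to strictly ordered initial conditions via continuous dependence, defines $t_1$ as the first time the ordering could fail, shows via Gr\"onwall that a violation cannot occur at any coordinate $i\geq 2$ on $[0,t_1]$, and then rules out a violation at $i=1$ by a direct contradiction at $t_1$, with a separate delicate case for the High Message regime on $\{s_1=1\}$. Both arguments rest on the same monotonicity of the drift, but the paper's structure is more tailored and avoids the summability bookkeeping in $\ell^2_w$.

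There is, however, a critical sign error in your argument. You assert that $P_0(s)$ is \emph{nonincreasing} in $s_1$ (``larger $s_1$ means fewer idle servers, hence a less-empty virtual queue''), but the opposite is true: fewer idle servers means the dispatcher receives \emph{fewer} messages, so its token queue is \emph{more} likely to be empty, and $P_0$ is \emph{nondecreasing} in $s_1$. This is transparent from the formulas (e.g.\ in the High Memory regime $P_0 = [1-\mu(1-s_1)/\lambda]^+$ increases with $s_1$), and the paper explicitly invokes ``$P_0(s)$ is nondecreasing in $s$'' at the corresponding step. The error is not cosmetic: quasi-monotonicity requires $\partial F_i/\partial s_1 = \lambda(s_{i-1}-s_i)\,\partial P_0/\partial s_1 \geq 0$ for $i\geq 2$, which holds precisely because $\partial P_0/\partial s_1\geq 0$ and $s_{i-1}\geq s_i$. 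With the sign you state, the system would \emph{fail} to be cooperative, and the Kamke--M\"uller framework you appeal to would not apply. The correct sign happens to rescue your plan, but the reasoning as written is invalid and would, if taken at face value, lead nowhere.

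A secondary gap concerns the High Message discontinuity set $\overline D$. Concatenating over the ``at most three'' intervals from Claim~\ref{cla:trajectories} controls a \emph{single} trajectory, but here you are comparing two trajectories that may enter and leave $\overline D$ at different times; in general there are up to six boundary times, and on intermediate stretches one trajectory may be in $D$ while the other is outside $\overline D$, so the two drifts being compared live in different Lipschitz patches simultaneously. Your proposal does not say how the weighted Gr\"onwall inequality survives these mixed stretches. The paper sidesteps this entirely by only examining the single hypothetical first-violation time $t_1$, where a two-case analysis (the cases $s_1^1(t_1)=s_1^2(t_1)<1$ and $=1$) closes the argument.
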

\begin{proof}
  It is known that uniqueness of solutions implies their continuous dependence on initial conditions, not only for the classical solutions in the High Memory and Constrained regimes, but also for the non-differentiable solutions of the High Message regime (see Chapter 8 of \cite{filippov}). Using this fact, it can be seen that it is enough to verify that $s^1(t)\geq s^2(t)$ when $s^1(0)> s^2(0)$, which we henceforth assume, under our particular definition of ``$>$" in Section \ref{sec:notation}. Let us define
\[ t_1=\inf \left\{ t\geq 0:s^1_k(t)< s^2_k(t),\text{ for some }k\geq 1 \right\}. \]
If $t_1=\infty$, then $s^1(t)\geq s^2(t)$ for all $t\geq 0$, and the result holds. It remains to consider the case where $t_1<\infty$, which we assume from now on.

By the definition of $t_1$, we have $s_{i}^1(t)\geq s_{i}^2(t)$ for all $i\geq 1$, and for all $t\leq t_1$. Since $P_0(s)$ is nondecreasing in $s$, this implies that $P_0(s^1(t))\geq P_0(s^2(t))$, for all $t\leq t_1$. Then, for all regular times $t\leq t_1$ and any $i\geq 2$, and also using the fact that $s_i$ is nonincreasing in $i$, we have
\begin{align*}
    F_i(s^1(t))-F_i(s^2(t))&= \lambda [s_{i-1}^1(t)-s_i^1(t)]P_0(s^1(t)) + [s_{i+1}^1(t)-s_{i+1}^2(t)] \\
 &\quad\quad\quad\quad - \lambda [s_{i-1}^2(t)-s_i^2(t)]P_0(s^2(t)) - [s_i^1(t)-s_i^2(t)] \\
&\geq \lambda [s_{i-1}^1(t)-s_i^1(t)]P_0(s^2(t))  \\
 &\quad\quad\quad\quad - \lambda [s_{i-1}^2(t)-s_i^2(t)]P_0(s^2(t)) - [s_i^1(t)-s_i^2(t)] \\
&\geq -\lambda P_0(s^2(t))[s_i^1(t)-s_i^2(t)] - [s_i^1(t)-s_i^2(t)] \\
&\geq -2 [s_i^1(t)-s_i^2(t)].
\end{align*}
Then, by Gr\"onwall's inequality we have
\begin{equation}\label{eq:gronwall}
 s_i^1(t)-s_i^2(t) \geq e^{-2t}[s_i^1(0)-s_i^2(0)], \quad \forall i\geq 2,
\end{equation}
for all $t\leq t_1$. This implies that $s_i^1(t)-s_i^2(t)>0$, for all $i\geq 2$ and for all $t\leq t_1$. It follows that, at time $t_1$, we must have $s_1^1(t_1)=s_1^2(t_1)$. The rest of the proof considers separately two different cases.

\noindent {\bf Case 1:} Suppose that we are dealing with the High Memory or the Constrained regime, or with the High Message regime with $s_1^1(t_1)=s_1^2(t_1)<1$. Since $s_1^1(t_1)=s_1^2(t_1)$, we have $P_0(s^1(t_1))=P_0(s^2(t_1))$. Then, due to the continuity of $s^1$, $s^2$, and of $P_0$ (local continuity for the High Message regime), there exists $\epsilon>0$ such that
\[ \lambda s_1^2(t) P_0(s^2(t)) - \lambda s_1^1(t)P_0(s^1(t)) - [s^1_1(t)-s^2_1(t)] > -\epsilon, \]
and (using Equation \eqref{eq:gronwall}) $s^1_2(t)-s^2_2(t)>\epsilon$, for all $t\leq t_1$ sufficiently close to $t_1$. As a result, we have
\begin{align}
    F_1(s^1(t))-F_1(s^2(t)) &= \lambda s_1^2(t) P_0(s^2(t)) - \lambda s_1^1(t)P_0(s^1(t))  \nonumber \\
&\quad\quad\quad  - [s^1_1(t)-s^2_1(t)] + [s^1_2(t)-s^2_2(t)] >0, \label{eq:driftDif1}
\end{align}
for all $t<t_1$ sufficiently close to $t_1$. Therefore, $s_1^1-s_1^2$ was increasing just before $t_1$. On the other hand, from the definition of $t_1$, we have $s_1^1(t_1)=s_1^2(t_1)$ and $s_1^1(t)\geq s_1^2(t)$ for all $t<t_1$. This is a contradiction, and therefore this case cannot arise.

\noindent {\bf Case 2:} Suppose now that we are dealing with the High Message regime, and that  $s_1^1(t_1)=s_1^2(t_1)=1$. Since $t_1<\infty$, we can pick a time $t_2>t_1$, arbitrarily close to $t_1$, such that $s_1^1(t_2)< s_1^2(t_2)$. Let us define
\[ t_1'=\sup \left\{ t\leq t_2:s^1_1(t)= s_1^2(t) \right\}. \]
Due to the continuity of $s^1$ and $s^2$, and since $s_1^1(t_1')=s_1^2(t_1')$ and $s_2^1(t_1)>s_2^2(t_1)$, there exists $\epsilon >0$ such that $s_1^2(t)-s_1^1(t)<\epsilon$ and $s_2^1(t)-s_2^2(t)>\epsilon$, for all $t\in[t_1',t_2]$ (we can always take a smaller $t_2$, if necessary, so that this holds). Furthermore, since $s_1^1(t)<1$ for all $t\in[t_1',t_2]$, we have $P_0(s^1(t))=0$, for all $t\in[t_1',t_2]$. Using these facts in Equation \eqref{eq:driftDif1}, we obtain $F_1(s^1(t))-F_1(s^2(t)) \geq 0$, for all $t\in[t_1',t_2]$. Therefore, $s_1^1-s_1^2$ is nondecreasing in that interval. This is a contradiction, because we have $s_1^1(t_1')=s_1^2(t_1')$ and $s_1^1(t_2)< s_1^2(t_2)$. Therefore, this case cannot arise either.
\end{proof}

We will now show that we can ``sandwich" any given trajectory $s(t)$ between a smaller one $s^l(t)$ and a larger one $s^u(t)$ (according to our partial order $\geq$) and prove that both $s^l(t)$ and $s^u(t)$ converge to $s^*$, to conclude that $s(t)$ converges to $s^*$.

\begin{proposition}\label{prop:asymptitic_stability}
The equilibrium $s^*$ of the fluid model is globally asymptotically stable, i.e.,
  \[ \lim\limits_{t\to\infty} \left\|s(t)-s^*\right\|_w = 0, \]
  for all fluid solutions $s(\cdot)$.
\end{proposition}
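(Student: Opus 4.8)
The plan is to establish convergence by \emph{sandwiching} the given fluid solution $s(\cdot)$ between a solution $s^{l}(\cdot)$ that increases to $s^{*}$ and a solution $\hat s(\cdot)$ that decreases to $s^{*}$, in the spirit of \cite{vvedenskaya}. The tools are the monotonicity of the fluid semiflow (Lemma~\ref{lem:monotonicity}), the uniqueness of fluid solutions together with their continuous dependence on the initial condition (valid in all three regimes, as already invoked in the proof of Lemma~\ref{lem:monotonicity}), and the uniqueness of the equilibrium $s^{*}$ (Lemma~\ref{lem:equilibrium}). Since every coordinate lies in $[0,1]$ and the weights $2^{-i}$ are summable, dominated convergence reduces the claim to showing $s_{i}(t)\to s^{*}_{i}$ for each fixed $i$, which I will obtain by proving $\liminf_{t\to\infty}s_{i}(t)\ge s^{*}_{i}$ and $\limsup_{t\to\infty}s_{i}(t)\le s^{*}_{i}$ separately.

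For the lower bound, let $s^{l}(\cdot)$ be the fluid solution started from the empty state $(1,0,0,\dots)\in\mathcal{S}^{1}$. Since every $s^{0}\in\mathcal{S}^{1}$ dominates $(1,0,0,\dots)$, Lemma~\ref{lem:monotonicity} yields $s(t)\ge s^{l}(t)$ for all $t$. Also $s^{l}(h)\ge s^{l}(0)$ trivially, so applying Lemma~\ref{lem:monotonicity} to $s^{l}(\cdot)$ and to its time shift $s^{l}(\cdot+h)$ (a fluid solution, since the model is autonomous) shows that each $s^{l}_{i}(\cdot)$ is nondecreasing, hence convergent; the pointwise limit agrees with the $\|\cdot\|_{w}$-limit by dominated convergence. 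By continuous dependence on the initial condition, the shifts $s^{l}(\cdot+k)$ converge, uniformly on compact time intervals, both to this constant limit and to the fluid solution started at it; the limit is therefore an equilibrium, hence equals $s^{*}$ by Lemma~\ref{lem:equilibrium}. Thus $\liminf_{t\to\infty}s_{i}(t)\ge s^{*}_{i}$.

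For the upper bound, fix $\gamma\in(\lambda,1)$ and an integer $i_{0}\ge 2$, and let $\hat s^{\,0}$ be the state with $\hat s^{\,0}_{i}=1$ for $i\le i_{0}$ and $\hat s^{\,0}_{i}=\gamma^{\,i-i_{0}}$ for $i\ge i_{0}$. Then $\hat s^{\,0}\in\mathcal{S}^{1}$, $\hat s^{\,0}\ge s^{*}$ (using $\lambda P^{*}_{0}\le\lambda\le\gamma$), and a direct computation using $0\le P_{0}(\cdot)\le 1$ and $P_{0}(\hat s^{\,0})=1$ (which holds because $\hat s^{\,0}_{1}=\hat s^{\,0}_{2}=1$) shows $F(\hat s^{\,0})\le 0$ componentwise. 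By the quasimonotonicity of $F$ that also underlies Lemma~\ref{lem:monotonicity}, the solution $\hat s(\cdot)$ started at $\hat s^{\,0}$ stays below $\hat s^{\,0}$ and, by the time-shift argument of the previous paragraph, is nonincreasing in time, hence converges from above to the equilibrium $s^{*}$. Consequently, \emph{if} one can exhibit a time $T$ and a choice of $\gamma,i_{0}$ with $s(T)\le\hat s^{\,0}$, then Lemma~\ref{lem:monotonicity} applied to $s(T+\cdot)$ gives $s(T+\tau)\le\hat s(\tau)$ for all $\tau\ge 0$, whence $\limsup_{t\to\infty}s_{i}(t)\le s^{*}_{i}$; combined with the lower bound this finishes the proof.

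The remaining step --- obtaining the domination $s(T)\le\hat s^{\,0}$ --- is the one I expect to be the main obstacle. When the tail of $s^{0}$ is light (for instance geometrically decaying, or compactly supported) it follows from an a priori estimate on the exponential moment $G(t)\triangleq\sum_{i\ge 1}r^{i}s_{i}(t)$ for some $r\in(1,1/\lambda)$: using \eqref{eq:drift1}--\eqref{eq:drift2} and rearranging the (absolutely convergent) sums one gets $\dot G(t)=r\lambda-s_{1}(t)-(r-1)\bigl(\tfrac1r-\lambda P_{0}(s(t))\bigr)G(t)\le r\lambda-(r-1)\bigl(\tfrac1r-\lambda\bigr)G(t)$, so $\limsup_{t\to\infty}G(t)$ is finite; then $s_{i}(T)\le(\text{const})\,r^{-i}$ for $T$ large, which is $\le\gamma^{\,i-i_{0}}=\hat s^{\,0}_{i}$ for $i\ge i_{0}$ once $\gamma$ is chosen with $\gamma r>1$ and $i_{0}$ is large enough, while $s_{i}(T)\le 1=\hat s^{\,0}_{i}$ for $i<i_{0}$. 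The genuine difficulty is a general $s^{0}\in\mathcal{S}^{1}$ whose tail decays only polynomially: then $G(0)=\infty$, and since the fluid dynamics does not thin a heavy tail over any finite horizon the exponential moment stays infinite, so no monotone-decreasing fluid solution dominates $s(T)$. I expect this case to require a more delicate, coordinate-by-coordinate argument exploiting the cascade structure of \eqref{eq:drift1}--\eqref{eq:drift2} (the drift of $s_{1},\dots,s_{i}$ depends only on $s_{1},\dots,s_{i+1}$ and on $P_{0}(s)$, which in turn depends only on $s_{1}$, or on $s_{1}$ and $s_{2}$ in the High Message regime), together with the lower bound $\liminf_{t}s_{i}(t)\ge s^{*}_{i}$ already established and a truncation/approximation of $s^{0}$.
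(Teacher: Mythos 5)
Your sandwich strategy and your appeal to Lemma~\ref{lem:monotonicity} match the spirit of the paper's proof, and your lower bound argument (the monotone-increasing solution from $(1,0,0,\dots)$, whose limit is an equilibrium by a standard time-shift / continuous-dependence argument) is correct and slightly more elementary than the paper's. However, the upper bound has the genuine gap you yourself flag: the exponential-moment estimate gives domination $s(T)\le \hat s^{\,0}$ only when $s^0$ has a geometrically decaying tail, and since $\dot s_i\ge -s_i$ implies $s_i(t)\ge e^{-t}s_i(0)$, a polynomial tail in $\mathcal S^1$ stays polynomial at every finite time, so no monotone-decreasing ``geometric'' supersolution ever dominates $s(T)$. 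You did not supply the ``more delicate coordinate-by-coordinate argument'' that you conjecture is needed, so the proof is incomplete.

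The paper closes exactly this hole by choosing the dominating initial condition to be $s^u_i(0)=\max\{s_i(0),s^*_i\}$ -- automatically in $\mathcal S^1$ regardless of tail behavior -- and then, instead of trying to find a monotone-decreasing trajectory, proving coordinate-wise convergence $s^u_i(t)\to s^*_i$ directly. The engine is the tail-sum representation $v_i=\sum_{j\ge i}s_j$: since $s_1^u(t)\ge s_1^*=\lambda$ by monotonicity, one has $\dot v_1^u=\lambda-s_1^u\le 0$, so $v_1^u$ is bounded and $\int_0^\infty\bigl(s_1^u(\tau)-s_1^*\bigr)\,d\tau<\infty$, whence $s_1^u(t)\to s_1^*$ by Lipschitz continuity. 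This base case feeds an induction: from the integral identity for $v_i^u$ one extracts
\[ \int_0^t\bigl(s_i^u-s_i^*\bigr)\,d\tau \le v_i^u(0)+\int_0^t\bigl(s_{i-1}^u-s_{i-1}^*\bigr)\,d\tau+\int_0^t\bigl(P_0(s^u)-P_0^*\bigr)\,d\tau, \]
and the last term is controlled either by the Lipschitz continuity of $P_0$ in $s_1$ (High Memory and Constrained regimes) or by the fact that $s_1^u$ can equal $1$ only on a time set of measure at most $v_1^u(0)/(1-\lambda)$ (High Message regime). This integrability-by-induction argument is what lets the paper dispense with any tail assumption on $s^0$, and it is the ingredient your proposal would need to be complete. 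If you want to salvage your approach, you could try truncating $s^0$ to a finitely-supported $s^{0,N}\ge s^0$... but note that finitely-supported truncations from above of an $s\in\mathcal{S}^1$ with heavy tail do not exist (you'd need $s^{0,N}_i\ge s^0_i>0$ for all $i$), so you would have to truncate from below and somehow propagate the convergence, which again runs into the need for a uniform-in-$N$ estimate of exactly the kind the paper's integral argument supplies.
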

\begin{proof}
Suppose that $s(0)=s^0\in\mathcal{S}^1$. We define initial conditions $s^u(0)$ and $s^l(0)$ by letting
  \begin{align*}
    s_i^u(0) & =\max \left\{ s_i(0),s_i^* \right\}, \quad \text{and} \quad   s_i^l(0) =\min \left\{ s_i(0),s_i^* \right\},
  \end{align*}
for all $i$. We then have $s^u(0)\geq s^0 \geq s^l(0)$, $s^u(0)\geq s^* \geq s^l(0)$, and $s^u(0),s^l(0)\in\mathcal{S}^1$. Due to monotonicity (Lemma \ref{lem:monotonicity}), we obtain that $s^u(t)\geq s(t) \geq s^l(t)$ and $s^u(t)\geq s^* \geq s^l(t)$ for all $t\geq 0$. Thus it suffices to prove that $\left\|s^u(t)-s^*\right\|_w$ and $\left\|s^l(t)-s^*\right\|_w$ converge to $0$ as $t\to\infty$.

For any $s\in\mathcal{S}^1$, we introduce an equivalent representation in terms of a vector $v$ with components $v_i$ defined by
\begin{equation}\label{eq:Vdefinition}
v_i\triangleq \sum_{j=i}^\infty s_j,\qquad i\geq 1. \nonumber
\end{equation}
Note that any $s\in\mathcal{S}^1$ can be fully recovered from $v$. Therefore, we can work with a representation $v^u(t)$, $v^l(t)$, and $v^*$, of the vectors $s^u(t)$,  $s^l(t)$, and $s^*$, respectively.

From the proof of Lemma \ref{lem:existence}, we know that a trajectory can be non-differentiable at most at a single point in time. This can occur only for the High Message regime, and only if the trajectory hits the set
\[ D= \left\{s\in\mathcal{S}^1:s_1=1 \text{ and } s_2>1-\lambda \right\}, \]
where the drift is discontinuous. In all other cases, the trajectories are not only differentiable, but also Lipschitz continuous (in time), with the same Lipschitz constant for all coordinates. Therefore, in order to prove the asymptotic stability of the solutions, which is a property of the limiting behavior as $t\to\infty$, we can assume that the trajectories are everywhere differentiable and Lipschitz continuous.

Our first step is to derive a differential equation for $v_i$. This requires the interchange of summation and differentiation, which we proceed to justify. For any $i\geq 1$, we define a sequence of functions $\left\{f_k^{(i)}\right\}_{k=1}^\infty$, as follows:
\[ f_k^{(i)}(t) \triangleq \sum\limits_{j=i}^k \frac{ds_j^u}{dt}(t). \]
Using Equations \eqref{eq:drift1} and \eqref{eq:drift2}, we obtain
\begin{align*}
     f_k^{(1)}(t) &= \lambda - s_1^u(t)  +[s_{n+1}^u(t)-\lambda s_k^u(t) P_0(s^u(t))], \\
     f_k^{(i)}(t) &= \lambda s_{i-1}^u(t) P_0(s^u(t)) - s_i^u(t)  +[s_{k+1}^u(t)-\lambda s_k^u(t) P_0(s^u(t))], \quad \forall\, i\geq 2.
  \end{align*}
Since $s^u(t)\in\mathcal{S}^1$, for all $t$, we have the pointwise limits
  \begin{align*}
     \lim_{k\to\infty} f_k^{(1)}(t) &= \lambda - s_1^u(t), \\
     \lim_{k\to\infty}  f_k^{(i)}(t) &= \lambda s_{i-1}^u(t) P_0(s^u(t)) - s_i^u(t), \quad \forall\, i\geq 2.
  \end{align*}
On the other hand, since all components of $s^u(\cdot)$ are Lipschitz continuous with the same constant, and since $P_0(s)$ is also Lipschitz-continuous, the functions in the sequence $\left\{f_k^{(i)}\right\}_{k=1}^\infty$ are equicontinuous, for any given $i$. Then, the Arzel\`a-Ascoli theorem allows us to conclude that $f^{(i)}_k(\cdot)$ also converges uniformly, over any compact interval of time, to their pointwise limits. Using the uniform convergence, and the fact that $s^u(0)\in\mathcal{S}^1$, we can interchange summation and differentiation (Theorem 7.17 in \cite{rudin}) to obtain
\begin{align*}
     \frac{dv^u_1}{dt}(t) = \frac{d}{dt} \sum\limits_{j=1}^\infty s^u_j(t) = \sum\limits_{j=1}^\infty \frac{ds^u_j}{dt}(t) &= \lambda - s_1^u(t) \\
  \frac{dv^u_i}{dt}(t) = \frac{d}{dt} \sum\limits_{j=i}^\infty s^u_j(t) = \sum\limits_{j=i}^\infty \frac{ds^u_j}{dt} (t)   &= \lambda s_{i-1}^u(t)P_0(s^u(t))-s_i^u(t), \quad \forall\,i\geq 2.
\end{align*}
Turning the above differential equations into integral equations, and using the facts $s_1^*=\lambda$ and $\lambda s_{i-1}^*P_0^*-s_i^*=0$, we have
  \begin{align*}
    v^u_1(t)-v_1^u(0) =& \int\limits_0^t \left(s_1^* - s_1^u(\tau)\right) d\tau, \\
    v^u_i(t)-v^u_i(0) =& \int\limits_0^t \Big( \lambda \Big( s_{i-1}^u(\tau)P_0(s^u(\tau))- s_{i-1}^*P_0^*\Big)-\big(s_i^u(\tau)-s_i^*\big) \Big) d\tau.
  \end{align*}
Note that from the definition of $v_i$, we have $v_1^u(t)\geq v_i^u(t)$. Furthermore, from Lemma \ref{lem:monotonicity}, we have $s_1^u(t)\geq s_1^*$, so that $\dot{v}^u_1(t)\leq 0$, for all $t\geq 0$. It follows that
\[ v_1^u(0)\geq v_1^u(t)\geq v_i^u(t) \geq v^u_i(t)-v^u_i(0)\geq -v^u_i(0), \]
for all $t$.

We will now use induction on $i$ to prove  coordinate-wise convergence, i.e., that $\left| s^u_i(t) -s_i^* \right|$ converges to $0$ for all $i\geq 1$. We start with the base case, $i=1$. We have $s_1^u(\tau)-s_1^* \geq 0$, for all $\tau \geq 0$. Using the fact $\dot{v}_1^u(t)\leq 0$, we see that $v_1^u(t)$ converges to some limit, which we denote by $v_1^u(\infty)$. Then,
\[0 \leq \int\limits_0^\infty \left( s_1^u(\tau) - s_1^*\right) d\tau = v_1^u(0)-v_1^u(\infty) \leq v_1^u(0) < \infty, \]
which, together with the fact that $s_1$ is Lipschitz continuous, implies that $\left(s_1^u(\tau)- s_1^*\right)\to 0$ as $\tau \to \infty$.

We now consider some $i\geq 2$ and make the induction hypothesis that
\begin{equation}\label{eq:bounded1}
 \int\limits_0^\infty \left(s_k^u(\tau)-s_k^*\right) d\tau < \infty, \quad \forall \, k \leq i-1.
\end{equation}
Then,
\begin{equation}\label{eq:bounded2}
-v^u_i(0) \leq
v_i^u(t)-v_i^u(0)=
 \int\limits_0^t \Big( \lambda \Big( s_{i-1}^u(\tau)P_0(s^u(\tau))- s_{i-1}^*P_0^*\Big) - \big(s_i^u(\tau)-s_i^*\big) \Big) d\tau.
\end{equation}
  Adding and subtracting $\lambda s_{i-1}^*P_0(s^u(\tau))$ inside the integral, we obtain
  \begin{align}
     & -v^u_1(0)\leq \int\limits_0^t \Big( \lambda \left[ s_{i-1}^u(\tau) - s_{i-1}^*\right]P_0(s^u(\tau)) \nonumber  \\
      & \qquad\qquad\qquad\qquad\qquad  + \lambda\left[P_0(s^u(\tau)) - P_0^*\right]s_{i-1}^* -\big(s_i^u(\tau)-s_i^*\big) \Big) d\tau. \label{eq:before}
  \end{align}
Using Lemma \ref{lem:monotonicity}, we have $s_{i-1}^u(\tau)\geq s_{i-1}^*$ for all $i\geq 1$, and for all $\tau\geq 0$, which also implies that $P_0(s^u(\tau))\geq P_0^*$ for all $\tau\geq 0$. Therefore, the two terms inside brackets are nonnegative. Using the facts $\lambda< 1$, $s_{i-1}^*\leq 1$, and $P_0(s^u(\tau))\leq 1$, Equation \eqref{eq:before} implies that
\begin{equation*}
-v_i^u(0)\leq   \int\limits_0^t \Big( \left[ s_{i-1}^u(\tau) - s_{i-1}^*\right] + \left[P_0(s^u(\tau)) - P_0^*\right] -\left[s_i^u(\tau)-s_i^*\right] \Big) d\tau,
\end{equation*}
or
\begin{align}
  &\int\limits_0^t \left( s_i^u(\tau) - s_i^* \right) d\tau \nonumber \\
  &\qquad\qquad \leq v_i(0) + \int\limits_0^t \left( s_{i-1}^u(\tau) - s_{i-1}^* \right) d\tau
 + \int\limits_0^t \Big(P_0(s^u(\tau)) - P_0^*\Big) d\tau. \label{eq:stability_inequality}
\end{align}
The first integral on the right-hand side of Equation \eqref{eq:stability_inequality} is upper-bounded uniformly in $t$, by the induction hypothesis (Equation \eqref{eq:bounded1}). We now derive an upper bound on the last integral, for each one of the three regimes.
  \begin{itemize}
    \item [(i)] {\bf High Memory {regime}}:
By inspecting the expression for $P_0(s)$ for the High-Memory variant, we observe that it is  monotonically nondecreasing and Lipschitz continuous  in $s_1$.
Therefore, there exists a constant $L$ such that
 \[ \int\limits_0^t \big( P_0(s^u(\tau)) - P_0^* \big) d\tau
 \leq \int\limits_0^t L\big(s_1^u(\tau)-s_1^*\big) d\tau.\]
Using the induction hypothesis for $k=1$, we conclude that the last integral on the right-hand side of Equation \eqref{eq:stability_inequality} is upper bounded, uniformly in $t$.

    \item [(ii)] {\bf Constrained {regime}}:
    For the Constrained regime, the function $P_0(s)$ is again monotonically nondecreasing and, as remarked at the beginning of the proof of Lemma \ref{lem:existence}, it is also Lipschitz continuous in $s_1$. Thus, the argument is identical to the previous case.
    \item [(iii)] {\bf High Message regime}: We have an initial condition $s^0\in\mathcal{S}^1$, and therefore $0\leq v_1^0 < \infty$. As already remarked, we have $\dot{v}^u_1(t) = \lambda -s_1^u(t) \leq 0$. It follows that $s_1^u$ can be equal to $1$ for at most $v_1^0/(1-\lambda)$ units of time. Therefore, $P_0(s^u(t)) = \left[ 1-(1-s^u_2(t))/\lambda \right]^+\mathds{1}_{\{s^u_1(t)=1\}}$ can be positive only on a set of times of Lebesgue measure at most $v_1^0/(1-\lambda)$. This implies the uniform (in $t$) upper bound
        \[ \int\limits_0^t \Big( P_0(s^u(\tau)) - P_0^* \Big) d\tau = \int\limits_0^t P_0(s^u(\tau)) d\tau \leq \frac{v_1^0}{1-\lambda}. \]

    \end{itemize}

For all three cases, we have shown that the last integral in Equation \eqref{eq:stability_inequality} is upper bounded, uniformly in $t$. It follows from Equation \eqref{eq:stability_inequality} and the induction hypothesis that
 \[ \int\limits_0^\infty \left( s_i^u(\tau)-s_i^* \right) d\tau < \infty. \]
This completes the proof of the induction step. Using the Lipschitz-continuity of $s^u_i(\cdot)$, it follows that $s^u_i(t)$ converges to $s^*_i$ for all $i\geq 1$. It is straightforward to check that this coordinate-wise convergence, together with boundedness ($s^u_i(t)\leq 1$, for all $i$ and $t$), implies that also
   \[ \lim\limits_{t\to\infty} \|s^u(t)-s^*\|_w =0. \]
   An analogous argument gives us the convergence
   \[ \lim\limits_{t\to\infty} \|s^l(t)-s^*\|_w =0, \]
   which concludes the proof.
\end{proof}

\section{Stochastic transient analysis --- Proof of Theorem \ref{thm:fluid_limit} and of the rest of Theorem \ref{thm:equilibrium}}\label{sec:proof_fluid}
We will now prove the convergence of the stochastic system to the fluid solution.
The proof involves three steps. We first define the process using a coupled sample path approach, as in \cite{powerOfLittle}.  We then show the existence of limiting trajectories under the fluid scaling (Proposition \ref{prop:tightness}). We finally show that any such limit trajectory must satisfy the differential equations in the definition of the fluid model (Proposition \ref{prop:derivatives}).

\subsection{Probability space and coupling}

We will first define a common probability space for all $n$. We will then define a coupled sequence of processes $\left\{\left(S^n(t),M^n(t)\right)\right\}_{n=1}^\infty$. This approach will allow us to obtain almost sure convergence in the common probability space.

\subsubsection{Fundamental processes and initial conditions}

All processes of interest (for all $n$) will be driven by certain common fundamental processes.
\begin{itemize}
  \item [a)] Driving Poisson processes: Independent Poisson counting processes $\mathcal{N}_\lambda(t)$ (process of arrivals, with rate $\lambda$), and $\mathcal{N}_1(t)$ (process of potential departures, with rate $1$). A coupled sequence $\left\{\mathcal{N}_{\mu(n)}(t)\right\}_{n=1}^\infty$ (processes of potential messages, with nondecreasing rates $\mu(n)$), independent of $\mathcal{N}_\lambda(t)$ and $\mathcal{N}_1(t)$, such that the events in $\mathcal{N}_{\mu(n)}(t)$ are a subset of the events in $\mathcal{N}_{\mu(n+1)}(t)$ almost surely, for all $n\geq 1$. These processes are defined on a common probability space $(\Omega_D,\mathcal{A}_D,\mathbb{P}_D)$.
  \item [b)] Selection processes: Three independent discrete time processes $U(k)$, $V(k)$, and $W(k)$, which are all i.i.d. and uniform on $[0,1]$, defined on a common probability space $(\Omega_S,\mathcal{A}_S,\mathbb{P}_S)$.
  \item [c)] Initial conditions: A sequence of random variables $\left\{\left(S^{(0,n)},M^{(0,n)}\right)\right\}_{n=1}^\infty$ defined on a common probability space $(\Omega_0,\mathcal{A}_0,\mathbb{P}_0)$ and taking values in $\left(\mathcal{S}^1\cap\mathcal{I}_n\right)\times\{0,1,\dots,c(n)\}$.
\end{itemize}
The whole system will be defined on the probability space
\[ (\Omega,\mathcal{A},\mathbb{P})=(\Omega_D\times\Omega_S\times\Omega_0, \mathcal{A}_D\times\mathcal{A}_S\times\mathcal{A}_0, \mathbb{P}_D\times\mathbb{P}_S\times\mathbb{P}_0). \]
All of the randomness in the system (for any $n$) will be specified by these fundamental processes, and everything else will be a deterministic function of them.

\subsubsection{A coupled construction of sample paths}
Recall that our policy results in a Markov process $\left(S^n(t),M^n(t)\right)\in\left(\mathcal{S}^1 \cap\mathcal{I}_n\right)\times\{0,1,\dots,c(n)\}$, where $S^n_i(t)$ is the fraction of servers with at least $i$ jobs and $M^n(t)$ is the number of tokens stored in memory, at time $t$.  We now describe a particular construction of the process, as a deterministic function of the fundamental processes. We decompose the process $S^n(t)$ as the sum of two non-negative and non-decreasing processes, $A^n(t)$ and $D^n(t)$, that represent the (scaled by $n$) total cumulative arrivals to and departures from the queues, respectively, so that
\[ S^n(t)=S^{(0,n)}+A^n(t)-D^n(t). \]
Let $t^{\lambda,n}_j$, $t^{1,n}_j$, and $t^{\mu,n}_j$ be the time of the $j$-th arrival of $\mathcal{N}_\lambda(nt)$, $\mathcal{N}_1(nt)$, and $\mathcal{N}_{\mu(n)}(nt)$, respectively. In order to simplify notation, we will omit the superscripts $\lambda$, $1$, and $\mu$, when the corresponding process is clear. We denote by $S^n(t^-)$ the left limit $\lim\limits_{s\uparrow t} S^n(s)$, and similarly for $M^n(t^-)$. Then, the first component of $A^n(t)$ is
\begin{align}
A_1^n(t) = \frac{1}{n} \sum\limits_{j=1}^{\mathcal{N}_\lambda(nt)} & \Big[ \mathds{1}_{[1,c(n)]}\left(M^n\left({t_j^n}^-\right)\right) \nonumber \\
&  + \mathds{1}_{\{0\}} \left(M^n\left({t_j^n}^-\right)\right)\mathds{1}_{\left[0, 1-S_1^n\left({t_j^n}^-\right)\right)}(U(j)) \Big]. \label{eq:cum_arrivals}
\end{align}
The above expression is interpreted as follows. We have an upward jump of size $1/n$ in $A_1^n$ every time that a job joins an empty queue, which happens every time that there is an arrival and either (i) there are tokens in the virtual queue (i.e., $M^n>0$) or, (ii) there are no tokens and an empty queue is drawn uniformly at random, which happens with probability $1-S^n_1$. Similarly, for $i\geq 2$,
\[ A_i^n(t)=\frac{1}{n} \sum\limits_{j=1}^{\mathcal{N}_\lambda(nt)} \mathds{1}_{\{0\}}\left(M^n\left({t_j^n}^-\right)\right) \mathds{1}_{\left[1-S_{i-1}^n\left({t_j^n}^-\right),1-S_i^n\left({t_j^n}^-\right)\right)}(U(j)). \]
In this case we have an upward jump in $A_i^n$ of size $1/n$ every time that there is an arrival, there are no tokens in the virtual queue (i.e., $M^n=0$), and a queue with exactly $i-1$ jobs is drawn uniformly at random, which happens with probability $S^n_{i-1}-S^n_{i}$. Moreover, for all $i\geq 1$,
\[ D_i^n(t) = \frac{1}{n} \sum\limits_{j=1}^{\mathcal{N}_1(nt)} \mathds{1}_{\left[1-S_i^n\left({t_j^n}^-\right),1-S_{i+1}^n\left({t_j^n}^-\right)\right)} (W(j)). \]
We have an upward jump in $D_i^n$ of size $1/n$ when there is a departure from a queue with exactly $i$ jobs, which happens with rate $\left(S^n_{i}-S^n_{i+1}\right)n$.

Recall that $\mu(n)$ is the message rate of an empty server. In the High Memory and Constrained regimes, we have $\mu(n)=\mu$, while in the High Message regime $\mu(n)$ is a nondecreasing and unbounded sequence. Potential messages are generated according to the process $\mathcal{N}_{\mu(n)}(nt)$, but an actual message is generated only if a randomly selected queue is empty.
Thus, the number of tokens in the virtual queue evolves as follows:
\begin{align}
  &M^n(t) =  M^{(0,n)} - \sum\limits_{j=1}^{\mathcal{N}_\lambda(nt)} \mathds{1}_{[1,c(n)]}\left(M^n\left({t_j^n}^-\right)\right) \nonumber \\
  &+ \sum\limits_{j=1}^{\mathcal{N}_{\mu(n)}\left(nt\right)} \mathds{1}_{[0,c(n)-1]}\left(M^n\left({t_j^n}^-\right)\right) \mathds{1}_{\left[0,1-S_1\left({t_j^n}^-\right)-\frac{M^n\left({t_j^n}^-\right)}{n}\right)}(V(j)). \label{eq:memory_def}
\end{align}
To see this, if the virtual queue is not empty, a token is removed from the virtual queue each time there is an arrival. Furthermore, if the virtual queue is not full, a new token is added each time a new message arrives from one of the $n(1-S_1^n)-M^n$ queues that do not have corresponding tokens in the virtual queue.

\begin{remark}
The desired result only concerns the convergence of the projection of the Markov process $\left(S^n(t),M^n(t)\right)$ onto its first component. However, the process of tokens $M^n$ will still have an impact on that limit. 
\end{remark}

As mentioned earlier, the proof involves the following two steps:
\begin{enumerate}
  \item We show that there exists a measurable set $\mathcal{C}\subset\Omega$ with $\mathbb{P}(\mathcal{C})=1$ such that for all $\omega\in\mathcal{C}$, any sequence of sample paths $S^n(\omega,t)$ contains a further subsequence that converges to a Lipschitz continuous trajectory $s(t)$, as $n\to\infty$.
  \item We characterize the derivative of $s(t)$ at any regular point and show that it is identical to the drift of our fluid model. Hence $s(t)$ must be a fluid solution for some initial condition $s^0$, yielding also, as a corollary, the existence of fluid solutions.
\end{enumerate}

\subsection{Tightness of sample paths}

We start by finding a set of ``nice" sample paths $\omega$ for which any subsequence of the sequence $\left\{S^n(\omega,t)\right\}_{n=1}^\infty$ contains a further subsequence $\left\{S^{n_k}(\omega,t)\right\}_{k=1}^\infty$ that converges to some Lipschitz continuous function $s$. The arguments involved here are fairly straightforward and routine.

\begin{lemma}\label{lem:nice_set}
  Fix $T>0$. There exists a measurable set $\mathcal{C}\subset \Omega$ such that $\mathbb{P}(\mathcal{C})=1$ and for all $\omega\in\mathcal{C}$,
  \begin{align}
  &\lim\limits_{n\to\infty} \sup\limits_{t\in[0,T]} \left|\frac{1}{n}\mathcal{N}_\lambda(\omega,nt)-\lambda t\right|=0, \label{eq:FLLN1} \\
  &\lim\limits_{n\to\infty} \sup\limits_{t\in[0,T]} \left|\frac{1}{n}\mathcal{N}_1(\omega,nt)- t\right|=0, \label{eq:FLLN2} \\
  &\lim\limits_{n\to\infty} \frac{1}{n} \sum\limits_{i=1}^n \mathds{1}_{[a,b)}(U(\omega,i))=b-a, \quad \text{for all } [a,b)\subset[0,1], \label{eq:GC1}\\
  &\lim\limits_{n\to\infty} \frac{1}{n} \sum\limits_{i=1}^n \mathds{1}_{[c,d)}(W(\omega,i))=d-c, \quad \text{for all } [c,d)\subset[0,1]. \label{eq:GC2}
  \end{align}
\end{lemma}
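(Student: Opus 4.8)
The plan is to prove each of the four displayed convergences as a separate almost-sure statement and then take $\mathcal{C}$ to be the intersection of the corresponding probability-one events; a finite intersection of probability-one events has probability one, so $\mathbb{P}(\mathcal{C})=1$ and all four hold simultaneously on $\mathcal{C}$. Note that neither the message processes $\mathcal{N}_{\mu(n)}$ nor the selection process $V$ enter this lemma, so only the four fundamental objects $\mathcal{N}_\lambda$, $\mathcal{N}_1$, $U$, and $W$ need to be controlled.

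For \eqref{eq:FLLN1} and \eqref{eq:FLLN2} I would invoke the functional strong law of large numbers for Poisson processes. Starting from the ordinary SLLN, $\mathcal{N}_\lambda(s)/s\to\lambda$ almost surely as $s\to\infty$ (via the SLLN for the i.i.d.\ interarrival times), and one upgrades this to uniform convergence on $[0,T]$ by the classical Dini/P\'olya-type argument: the limit $t\mapsto\lambda t$ is continuous, each prelimit map $t\mapsto \mathcal{N}_\lambda(\omega,nt)/n$ is nondecreasing, so evaluating on a finite grid $0=t_0<\cdots<t_K=T$ and sandwiching an arbitrary $t$ between consecutive grid points bounds the supremum error by the grid errors plus the mesh size, both of which can be made arbitrarily small. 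The same argument applies verbatim to $\mathcal{N}_1$ with rate $1$.

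For \eqref{eq:GC1} and \eqref{eq:GC2}, these are instances of the Glivenko--Cantelli theorem for the empirical distributions of the i.i.d.\ uniform sequences $U(\cdot)$ and $W(\cdot)$. For each fixed rational $q\in[0,1]$ the SLLN gives $\frac{1}{n}\sum_{i=1}^n\mathds{1}_{[0,q)}(U(i))\to q$ almost surely; intersecting over the countably many rationals yields a probability-one event on which this holds for all rational $q$ at once, and monotonicity in $q$ together with continuity of the limit $q\mapsto q$ extends it to every $q\in[0,1]$, uniformly. Writing $\mathds{1}_{[a,b)}=\mathds{1}_{[0,b)}-\mathds{1}_{[0,a)}$ then gives \eqref{eq:GC1} for every subinterval $[a,b)\subset[0,1]$; the argument for $W$ is identical. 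Taking $\mathcal{C}$ to be the intersection of these four probability-one events completes the proof.

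The arguments here are routine — as the authors note — and there is no serious obstacle. The only two mildly delicate points are (i) passing from pointwise a.s.\ convergence to uniform convergence on $[0,T]$ in \eqref{eq:FLLN1}--\eqref{eq:FLLN2}, and (ii) passing from ``for each fixed interval'' to ``for all intervals simultaneously'' in \eqref{eq:GC1}--\eqref{eq:GC2}; both are handled by the same standard device of a countable dense set plus monotonicity of the relevant nondecreasing functions.
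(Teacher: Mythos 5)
Your argument is correct and is essentially the same as the paper's: invoke the functional strong law of large numbers for the two Poisson processes, invoke Glivenko--Cantelli for the two uniform selection sequences, and take $\mathcal{C}$ to be the (probability-one) intersection, which in the product-space notation of the paper is written as $\mathcal{C}_D\times\mathcal{C}_S\times\Omega_0$. The paper simply cites these two classical results without spelling out the Dini/P\'olya and countable-dense-grid upgrades that you sketch.
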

\begin{proof}
  Using the Functional Strong Law of Large Numbers for Poisson processes, we obtain a subset $\mathcal{C}_D\subset\Omega_D$ such that $\mathbb{P}_D(\mathcal{C}_D)=1$ on which Equations \eqref{eq:FLLN1} and \eqref{eq:FLLN2} hold. Furthermore, the Glivenko-Cantelli lemma gives us another subset $\mathcal{C}_S\subset\Omega_S$ such that $\mathbb{P}_S(\mathcal{C}_S)=1$ and on which Equations \eqref{eq:GC1} and \eqref{eq:GC2} hold. Taking $\mathcal{C}=\mathcal{C}_D\times\mathcal{C}_S\times\Omega_0$ concludes the proof.
\end{proof}

Let us fix an arbitrary $s^0\in[0,1]$, sequences $R_n \downarrow 0$ and $\gamma_n\downarrow 0$, and a constant $L>0$. For $n\geq 1$, we define the following subsets of $D[0,T]$:
\begin{align}
  E_n(R_n,\gamma_n) \triangleq \Big\{ s\in D[0,T]:\,&|s(0)-s^0|\leq R_n, \text{ and } \nonumber \\
   &|s(a)-s(b)|\leq L|a-b| +\gamma_n, \,\,\forall \, a,b\in[0,T]\Big\}.
\end{align}
We also define
\[ E_c \triangleq \Big\{ s\in D[0,T]:\, s(0)=s^0,\,\, |s(a)-s(b)|\leq L|a-b|, \,\,\forall \, a,b\in[0,T]\Big\}, \]
which is the  set of $L$-Lipschitz continuous functions with fixed initial conditions, and which is known to be sequentially compact, by the
Arzel\`a-Ascoli theorem.

\begin{lemma}\label{lem:sequences}
  Fix $T>0$,  $\omega\in\mathcal{C}$, and some $s^0\in\mathcal{S}^1$. Suppose that
  \[ \left\|S^n(\omega,0)-s^0\right\|_w\leq \tilde{R}_n, \]
  for some sequence $\tilde{R}_n \downarrow 0$. Then, there exist sequences $\left\{R_n^{(i)} \downarrow 0\right\}_{i=0}^\infty$ and $\gamma_n \downarrow 0$ such that
  \[ S_i^n(\omega,\cdot) \in E_n\left(R^{(i)}_n,\gamma_n\right), \quad \forall\,i\in\mathbb{Z}_+,\ \forall\ n\geq 1, \]
with the constant $L$ in the definition of $E_n$ equal to $1+\lambda$.
\end{lemma}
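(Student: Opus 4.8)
The statement bundles two requirements: that each coordinate $S_i^n(\omega,\cdot)$ starts within $R_n^{(i)}$ of the reference point $s_i^0$, and that it obeys an approximate-Lipschitz bound $|s(a)-s(b)|\le(1+\lambda)|a-b|+\gamma_n$ with a \emph{single} error sequence $\gamma_n\downarrow0$ serving all coordinates at once. I would prove these two facts separately; both are bookkeeping given the coupled sample-path construction above, which is why (as the text already warns) the argument is routine.

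For the initial values, unpack the weighted norm: from $\|x\|_w^2=\sum_{j\ge0}|x_j|^2/2^j$ and the hypothesis $\|S^n(\omega,0)-s^0\|_w\le\tilde R_n$ we get $|S_i^n(\omega,0)-s_i^0|\le 2^{i/2}\tilde R_n$ for all $i$ and $n$. Setting $R_n^{(i)}\triangleq 2^{i/2}\tilde R_n$ gives, for each fixed $i$, a sequence that decreases to $0$ in $n$ (since $\tilde R_n\downarrow0$), and by construction the first defining inequality of $E_n(R_n^{(i)},\gamma_n)$ holds, with the reference point in $E_n$ taken to be $s_i^0$.

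For the modulus of continuity, use the decomposition $S_i^n(t)=S_i^{(0,n)}+A_i^n(t)-D_i^n(t)$. Both $A_i^n$ and $D_i^n$ are non-decreasing and jump only by $1/n$; moreover, inspecting \eqref{eq:cum_arrivals} and the analogous formulas for $A_i^n$ ($i\ge2$) and $D_i^n$, each jump of the arrival process $\mathcal{N}_\lambda(n\cdot)$ can increment $A_i^n$ by at most $1/n$ and each jump of $\mathcal{N}_1(n\cdot)$ can increment $D_i^n$ by at most $1/n$, \emph{uniformly in} $i$. Hence, for $0\le a\le b\le T$,
\[ |S_i^n(b)-S_i^n(a)|\le \tfrac1n\big(\mathcal{N}_\lambda(nb)-\mathcal{N}_\lambda(na)\big)+\tfrac1n\big(\mathcal{N}_1(nb)-\mathcal{N}_1(na)\big). \]
Now invoke $\omega\in\mathcal{C}$: with $\epsilon_n\triangleq\sup_{t\in[0,T]}|\tfrac1n\mathcal{N}_\lambda(\omega,nt)-\lambda t|+\sup_{t\in[0,T]}|\tfrac1n\mathcal{N}_1(\omega,nt)-t|$, Equations \eqref{eq:FLLN1}--\eqref{eq:FLLN2} give $\epsilon_n\to0$, and each of the two increments above differs from $\lambda(b-a)$, resp.\ $(b-a)$, by at most twice the corresponding supremum; collecting terms yields $|S_i^n(b)-S_i^n(a)|\le(1+\lambda)|a-b|+2\epsilon_n$ for all $i$ and all $a,b\in[0,T]$. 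To obtain a genuinely non-increasing error, put $\gamma_n\triangleq2\sup_{m\ge n}\epsilon_m$; then $\gamma_n\downarrow0$, $\gamma_n\ge2\epsilon_n$, and we conclude $S_i^n(\omega,\cdot)\in E_n(R_n^{(i)},\gamma_n)$ with $L=1+\lambda$.

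There is no real obstacle here, but the one point worth flagging is \emph{why} a coordinate-independent $\gamma_n$ exists: the displayed increment bound is controlled purely by the two common driving Poisson processes, with no dependence on the level $i$, so a single modulus of continuity serves every coordinate. I would state this explicitly, since it is precisely the property used downstream to extract a subsequential limit in $D^\infty[0,T]$ with the same Lipschitz modulus on all coordinates.
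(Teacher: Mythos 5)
Your proof is correct and follows the same route as the paper's: bound each coordinate's increments by the driving Poisson counting processes (which dominate all coordinates uniformly), invoke the FSLLN bounds that hold for $\omega\in\mathcal{C}$ to get the $(1+\lambda)$-Lipschitz modulus plus a vanishing, coordinate-independent error, and unpack the weighted norm to control the initial values. The only cosmetic differences are that you use the tighter $R_n^{(i)}=2^{i/2}\tilde R_n$ where the paper uses the looser $2^i\tilde R_n$, and you spell out the monotonization $\gamma_n\triangleq 2\sup_{m\ge n}\epsilon_m$ that the paper leaves implicit.
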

\begin{proof} Fix some $\omega\in\mathcal{C}$. Based on our coupled construction, each coordinate of $A^n$ (the process of cumulative arrivals) and $D^n$ (the process of cumulative departures) is non-decreasing, and can have a positive jump, of size $1/n$, only when there is an event in $\mathcal{N}_\lambda$ or $\mathcal{N}_1$, respectively. As a result, for every $i$ and $n$, we have
  \[ \left|A^n_i(\omega,a)-A^n_i(\omega,b)\right| \leq \frac{1}{n}\left|\mathcal{N}_\lambda(\omega,na)-\mathcal{N}_\lambda(\omega,nb)\right|, \quad \forall \, a,b\in [0,T], \]
  and
  \[ \left|D^n_i(\omega,a)-D^n_i(\omega,b)\right| \leq \frac{1}{n}\left|\mathcal{N}_1(\omega,na)-\mathcal{N}_1(\omega,nb)\right|, \quad \forall \, a,b\in [0,T]. \]
  Therefore,
  \begin{align*}
   \left|S^n_i(\omega,a)-S^n_i(\omega,b)\right| \leq \frac{1}{n} & \left|\mathcal{N}_\lambda(\omega,na)-\mathcal{N}_\lambda(\omega,nb)\right| \\
    & + \frac{1}{n} \left|\mathcal{N}_1(\omega,na)-\mathcal{N}_1(\omega,nb)\right|.
    \end{align*}
  Since $\omega\in\mathcal{C}$, Lemma \ref{lem:nice_set} implies that $\frac{1}{n}\mathcal{N}_\lambda(\omega,nt)$ and $\frac{1}{n}\mathcal{N}_1(\omega,nt)$ converge uniformly on $[0,T]$ to $\lambda t$ and to $t$, respectively. Thus, there exists a pair of sequences $\gamma_n^1\downarrow 0$ and $\gamma_n^2 \downarrow 0$ (which depend on $\omega$) such that for all $n\geq 1$,
  \[ \frac{1}{n}|\mathcal{N}_\lambda(\omega,na)-\mathcal{N}_\lambda(\omega,nb)| \leq \lambda|a-b|+\gamma_n^1, \]
  and
  \[ \frac{1}{n}|\mathcal{N}_1(\omega,na)-\mathcal{N}_1(\omega,nb)| \leq |a-b|+\gamma_n^2, \]
  which imply that
  \[ \left|S^n_i(\omega,a)-S^n_i(\omega,b)\right| \leq (1+\lambda)|a-b|+(\gamma_n^1+\gamma_n^2).
 \]
  The proof is completed by setting $R^{(i)}_n=2^i\tilde{R}_n$, $\gamma_n=\gamma_n^1+\gamma_n^2$, and $L=1+\lambda$.
\end{proof}

We are now ready to prove the existence of convergent subsequences of the process of interest.

\begin{proposition}\label{prop:tightness}
  Fix $T>0$, $\omega\in\mathcal{C}$, and some $s^0\in\mathcal{S}^1$. Suppose (as in Lemma \ref{lem:sequences}) that $\|S^n(\omega,0)-s^0\|_w\leq\tilde{R}_n$, where $\tilde{R}_n\downarrow 0$. Then, every subsequence of $\left\{S^n(\omega,\cdot)\right\}_{n=1}^{\infty}$ contains a further subsequence $\left\{S^{n_k}(\omega,\cdot)\right\}_{k=1}^{\infty}$ that converges to a coordinate-wise Lipschitz continuous function $s(t)$ with $s(0)=s^0$ and
  \[ |s_i(a)-s_i(b)| \leq L|a-b|, \quad\quad \forall \, a,b\in[0,T],i\in\mathbb{Z}, \]
  where $L$ is independent of $T$, $\omega$, and $s(\cdot)$.
\end{proposition}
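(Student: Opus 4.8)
The plan is to combine the one-dimensional tightness estimate of Lemma~\ref{lem:sequences} with a diagonal extraction over coordinates, and then upgrade coordinate-wise convergence to convergence in the weighted metric $d^{\mathbb{Z}_+}$. First I would fix a subsequence of $\{S^n(\omega,\cdot)\}$; for notational simplicity call it again $\{S^n(\omega,\cdot)\}$. By Lemma~\ref{lem:sequences}, for each fixed $i$ the functions $S^n_i(\omega,\cdot)$ lie in $E_n(R^{(i)}_n,\gamma_n)$ with Lipschitz constant $L=1+\lambda$, and since $R^{(i)}_n\downarrow 0$ and $\gamma_n\downarrow 0$, these functions are asymptotically $L$-Lipschitz with asymptotically correct initial value $s^0_i$. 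A standard perturbation of the Arzel\`a--Ascoli argument (or the observation that $E_n(R_n,\gamma_n)$ is relatively compact in $D[0,T]$ with the uniform metric and all its limit points lie in the compact set $E_c$) gives a subsequence along which $S^n_i(\omega,\cdot)$ converges uniformly on $[0,T]$ to some $L$-Lipschitz $s_i$ with $s_i(0)=s^0_i$. Applying this successively to $i=0,1,2,\dots$ and taking a diagonal subsequence $\{n_k\}$, I obtain a single subsequence along which $S^{n_k}_i(\omega,\cdot)\to s_i(\cdot)$ uniformly on $[0,T]$, simultaneously for every $i\in\mathbb{Z}_+$, with each $s_i$ being $L$-Lipschitz and $s_i(0)=s^0_i$.

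The remaining point is to turn this coordinate-wise uniform convergence into convergence in the metric $d^{\mathbb{Z}_+}$, i.e., $\sup_{t\in[0,T]}\|S^{n_k}(\omega,t)-s(t)\|_w\to 0$. This is where the weights $2^{-i}$ do the work: since $0\le S^{n_k}_i(\omega,t)\le 1$ and $0\le s_i(t)\le 1$ for all $i$ and $t$, each summand $|S^{n_k}_i(\omega,t)-s_i(t)|^2/2^i$ is bounded by $1/2^i$, which is summable. Given $\varepsilon>0$, choose $N$ with $\sum_{i>N}2^{-i}<\varepsilon/2$; then uniform convergence of the finitely many coordinates $i=0,\dots,N$ makes $\sum_{i=0}^N \sup_t |S^{n_k}_i(\omega,t)-s_i(t)|^2/2^i<\varepsilon/2$ for $k$ large, so $\sup_t\|S^{n_k}(\omega,t)-s(t)\|_w^2<\varepsilon$ for $k$ large. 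Hence $S^{n_k}(\omega,\cdot)\to s(\cdot)$ in $D^\infty[0,T]$. Finally, the limit $s(t)$ takes values in $[0,1]^{\mathbb{Z}_+}$ with $s_0\equiv 1$ and $s_i\ge s_{i+1}$ (these being closed conditions preserved under coordinate-wise limits), so $s(t)\in\mathcal{S}$ for every $t$; the stronger membership $s(t)\in\mathcal{S}^1$ and the identification of $s$ as a genuine fluid solution are deferred to Proposition~\ref{prop:derivatives}, and are not needed here.

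The only mild subtlety — and the step I would be most careful about — is the first one: $E_n(R_n,\gamma_n)$ is not itself compact, and its elements are only \emph{approximately} Lipschitz, so one cannot quote Arzel\`a--Ascoli verbatim. The clean way around this is to note that any sequence $f_n\in E_n(R_n,\gamma_n)$ with $R_n,\gamma_n\downarrow 0$ is uniformly bounded (by $|s^0|+LT+\gamma_1$) and uniformly equicontinuous up to the vanishing error $\gamma_n$, hence has a uniformly convergent subsequence whose limit lies in $E_c$; this is a routine $\varepsilon/3$ modification of the usual proof and requires no new idea. Everything else is bookkeeping: the diagonalization over the countably many coordinates, and the tail estimate on the weighted norm. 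The constant $L=1+\lambda$ is manifestly independent of $T$, $\omega$, and the particular limit $s$, as required.
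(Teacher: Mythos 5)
Your proposal is correct and follows essentially the same route as the paper: coordinate-wise Arzel\`a--Ascoli (with the $\varepsilon/3$ perturbation to account for the vanishing $\gamma_n$ slack), a diagonal extraction over the coordinates, and a tail estimate on the weighted norm using $|S^n_i|,|s_i|\le 1$ and the summability of $2^{-i}$. The paper phrases the extraction as a nested sequence of subsequences rather than an explicit diagonal, but the two are equivalent, and both arguments identify the same constant $L=1+\lambda$.
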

\begin{proof}
As in Lemma \ref{lem:sequences}, let $L=1+\lambda$.
A standard argument, similar to the one in \cite{bramson98} and
\cite{powerOfLittle}, based on the sequential compactness of $E_c$ and the ``closeness'' of $E_n\big(R^{(i)}_n,\gamma_n\big)$ to $E_c$
establishes the following. For any $i\geq 1$,  every subsequence of
 $\left\{S^n_i(\omega,\cdot)\right\}_{n=1}^{\infty}$ contains a further subsequence that converges to a Lipschitz continuous function $y_i(t)$ with $y_i(0)=s^0_i$.

Starting with the existence of coordinate-wise limit points, we now argue the existence of a limit point of $S^n$ in $D^\infty[0,T]$. Let $s_1$ be a Lipschitz continuous limit point of $\left\{S^n_1(\omega,\cdot)\right\}_{n=1}^\infty$, so that there is a subsequence such that
\[ \lim\limits_{k\to\infty} d\left(S_1^{n^1_k}(\omega,\cdot),s_1\right)=0.\]
We then proceed inductively and let
 $s_{i+1}$ be a limit point of a subsequence of $\big\{S_{i+1}^{n^i_k}(\omega,\cdot)\big\}_{k=1}^\infty$, where $\left\{n^i_k\right\}_{k=1}^\infty$ are the indices of the subsequence of $S_i^n$.

We now argue that $s$ is indeed a limit point of $S^n$ in $D^\infty[0,T]$. Fix a positive integer $i$. Because of the construction of $s$,
$S_j^{n_k^i}(\omega,\cdot)$ converges to $s_j$, as $k\to\infty$, for $j=1,\ldots,i$. In particular, there exists some $n^i>i$, for which
$$d\big(S_j^{n^i}(\omega,\cdot), s_j\big)\leq \frac{1}{i},\qquad j=1,\ldots,i.$$
We then have
\begin{align*}
     d^{\mathbb{Z}_+}\left( S^{n^i}(\omega,\cdot), s\right) &= \sup\limits_{t\in[0,T]} \sqrt{\sum\limits_{j=1}^\infty 2^{-j} \left| S_j^{n^i}(\omega,t) - s_j(t) \right|^2 } \\
     &\leq \frac{1}{i} + \sqrt{\sum\limits_{j=n^i+1}^\infty 2^{-j+2} }.
   \end{align*}
We now let $i$ increase to infinity (in which case {$n^i$} also increases to infinity), and
we conclude that $d^{\mathbb{Z}_+}\left( S^{n^i}(\omega,\cdot), s\right)\to 0$.

\end{proof}

This concludes the proof of the tightness of the sample paths. It remains to prove that any possible limit point is a fluid solution.

\subsection{Derivatives of the fluid limits}

\begin{proposition}\label{prop:derivatives}
  Fix $\omega\in\mathcal{C}$ and $T>0$. Let $s$ be a limit point of some subsequence of $\left\{S^n(\omega,\cdot)\right\}_{n=1}^\infty$.
As long as $\omega$ does not belong to a certain zero-measure subset of $\mathcal{C}$, $s$ satisfies the differential equations that define a fluid solution (cf. Definition \ref{def:fluid_model}).
\end{proposition}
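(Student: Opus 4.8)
The plan is to fix $\omega\in\mathcal{C}$, discard one further $\mathbb{P}$‑null set (identified below), fix a limit point $s$ together with a subsequence $\{S^{n_k}(\omega,\cdot)\}$ converging to it (as produced in Proposition~\ref{prop:tightness}), and verify \eqref{eq:drift1}--\eqref{eq:drift2} at every \emph{regular} time $t\in[0,T]$, i.e.\ a time at which every coordinate $s_i$ is differentiable; since each $s_i$ is Lipschitz, Lebesgue‑a.e.\ $t$ is regular, which is all Definition~\ref{def:fluid_model} demands. At a regular $t$ I would compute $\frac{ds_i}{dt}(t)=\lim_{h\downarrow0}\frac1h(s_i(t+h)-s_i(t))$ by interchanging the limits in $k$ and in $h$. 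Because $S^{n_k}(\omega,\cdot)\to s$ uniformly on $[0,T]$, the increment $s_i(t+h)-s_i(t)$ equals $\lim_k\big[(A^{n_k}_i(t+h)-A^{n_k}_i(t))-(D^{n_k}_i(t+h)-D^{n_k}_i(t))\big]$, with $A^n,D^n$ the coupled cumulative‑arrival and departure processes of Section~\ref{s:ssr}, so the task reduces to identifying the limiting increments over a short window $(t,t+h]$.

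The departure increments and the ``fresh‑randomness'' part of the arrival increments are routine. For $D^n_i$, the window contains $\mathcal{N}_1(n(t+h))-\mathcal{N}_1(nt)\approx nh$ summands by \eqref{eq:FLLN2}, each a function of an independent uniform $W(j)$, and $S^n$ oscillates by at most $(1+\lambda)h+o(1)$ on the window (Lemma~\ref{lem:sequences}); with \eqref{eq:GC2} this gives $D^{n_k}_i(t+h)-D^{n_k}_i(t)\to\int_t^{t+h}(s_i(u)-s_{i+1}(u))\,du$. The same reasoning with the independent uniforms $U(j)$ and \eqref{eq:GC1} shows that, up to an error that is $o(h)$ after dividing by $h$, $A^{n_k}_i(t+h)-A^{n_k}_i(t)\to(s_{i-1}(t)-s_i(t))\lim_k\Phi^{n_k}(t,t+h)$ for $i\ge2$, while for $i=1$ it tends to $\lambda h-s_1(t)\lim_k\Phi^{n_k}(t,t+h)$, where
\[
\Phi^{n}(t,t+h)\triangleq\tfrac1{n}\#\big\{j:\ t^{\lambda,n}_j\in(t,t+h],\ M^{n}((t^{\lambda,n}_j)^-)=0\big\}
\]
is the (scaled) number of arrivals in the window finding an empty virtual queue. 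Everything thus comes down to $\lim_k\Phi^{n_k}(t,t+h)$, and a direct computation shows that substituting $\lim_k\Phi^{n_k}(t,t+h)=\lambda\int_t^{t+h}P_0(s(u))\,du$, dividing by $h$, and sending $h\downarrow0$ reproduces exactly \eqref{eq:drift1}--\eqref{eq:drift2}.

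The core claim is therefore $\Phi^{n_k}(t,t+h)\to\lambda\int_t^{t+h}P_0(s(u))\,du$, and I would split its proof in two. First, an \emph{averaging} step: since arrivals form a Poisson process of rate $\lambda n$ and $M^n$ is adapted, $\Phi^n(t,t+h)-\lambda\int_t^{t+h}\mathds{1}_{\{M^n(u)=0\}}\,du$ is a martingale increment with quadratic variation $O(h/n)$, so a maximal inequality and Borel--Cantelli (the source of the extra null set) make it negligible as $n\to\infty$ for fixed $h$. Second, a \emph{separation of time scales} step: on $(t,t+h]$ we have $|S^{n_k}_1(u)-s_1(t)|\le\delta$ with $\delta=\delta(h,k)\downarrow0$, and $M^{n_k}$ is a birth--death chain with up‑rate $\mu(n_k)(n_k(1-S^{n_k}_1(u))-m)$ at state $m$ and down‑rate $\lambda n_k$; since $c(n_k)=o(n_k)$ in all three regimes we have $m/n_k\to0$, and $M^{n_k}$ can be sandwiched between the two homogeneous M/M/1/$c(n_k)$ chains obtained by freezing $S^{n_k}_1$ at $s_1(t)\pm\delta$. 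Each such chain makes at least $\Theta(n_k)$ transitions per unit time, so an occupation‑time estimate for finite birth--death chains shows its fraction of time in state $0$ over the window converges (fixed $h$, $n_k\to\infty$) to its stationary probability $[\sum_{\ell=0}^{c(n_k)}\rho_{n_k}^\ell]^{-1}$ with $\rho_{n_k}=\mu(n_k)(1-s_1(t))/\lambda$; this in turn tends to $P_0(s(t))$ — exactly in the Constrained regime, to $[1-\mu(1-s_1(t))/\lambda]^+$ by summing the geometric series in the High Memory regime, and to $0$ in the High Message regime when $s_1(t)<1$. Letting $\delta\downarrow0$ (continuity of $P_0$ in $s_1$ off the degenerate set) removes the sandwich, and a final $h\downarrow0$, using continuity of $u\mapsto P_0(s(u))$ at $t$ (valid at all but at most one instant, by the trajectory analysis in the proof of Lemma~\ref{lem:existence}), closes the argument.

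I expect the second step — making the separation of time scales rigorous and uniform — to be the main obstacle, precisely because $M^n$ is unscaled and lives on a faster clock than $S^n$, so it is not a density‑dependent process and no off‑the‑shelf limit theorem applies; one must control the fast chain's equilibration uniformly over the window and over all parameter values, including the critical case $\rho_{n_k}\to1$ where its relaxation time grows with $c(n_k)$. The sharpest instance is the High Message regime at a regular $t$ with $s_1(t)=1$, where the frozen‑$S^n_1$ heuristic spuriously gives token‑creation rate $0$. There I would argue directly from the coupled construction: when $S^{n_k}_1$ is within $\delta$ of $1$, servers holding exactly one job empty out at aggregate rate $(S^{n_k}_1(u)-S^{n_k}_2(u))n_k\to(1-s_2(t))n_k$, and since $\mu(n_k)\to\infty$ each newly idle server deposits a token within a delay of order $1/\mu(n_k)=o(1/n_k)$, so tokens are created at effective rate $(1-s_2(t))n_k$ and removed at rate $\lambda n_k$; the virtual queue then behaves like a fast birth--death chain with ratio $(1-s_2(t))/\lambda$ capped at $c$, whose stationary $\{0\}$‑probability $[1-(1-s_2(t))/\lambda]^+$ is precisely $P_0(s(t))$ in this regime. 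Combining the two steps, plugging $\lim_k\Phi^{n_k}(t,t+h)=\lambda\int_t^{t+h}P_0(s(u))\,du$ into the increment identities and letting $h\downarrow0$ yields \eqref{eq:drift1}--\eqref{eq:drift2} at every regular $t$; hence $s$ is a fluid solution, which also establishes the existence of fluid solutions.
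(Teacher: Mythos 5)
Your overall plan — examining increments of the coupled processes $A^n, D^n$ over a short window, freezing $S^n_1$ at $s_1(t)$ up to $O(\epsilon)$ errors, sandwiching $M^{n_k}$ between time-homogeneous birth--death chains, and matching their stationary $\{0\}$-probabilities to $P_0(s(t))$ — is the same skeleton as the paper's proof, and the ``fresh-randomness'' parts (departures, and the $U(j)$ indicators) are handled the same way. However, there are two genuine gaps. The first, which you correctly flag as ``the main obstacle,'' is the uniformity of the fast chain's equilibration when $\rho_{n_k}\to 1$ and $c(n_k)\to\infty$. You say one must control this but do not say how. The paper resolves it with a doubly-indexed sandwich: in the High Memory regime it replaces the cap $c(n_k)$ by a \emph{fixed} cap $c_l$ (for the lower bound) and by $\infty$ (for the upper bound), and in the High Message regime it replaces $\mu(n_k)$ by a fixed $\mu_l$; then it takes $k\to\infty$ with $\epsilon$ and $l$ frozen, using PASTA on a fixed-size chain, and only afterwards sends $l\to\infty$ and $\epsilon\to 0$. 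Without something of this kind the ``occupation-time estimate for finite birth--death chains'' is not available uniformly in $k$, so this step of your argument is incomplete rather than merely tedious.

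The second gap is in the exceptional case, the High Message regime at a regular $t$ with $s_1(t)=1$. You claim that a newly idle server ``deposits a token within a delay of order $1/\mu(n_k)=o(1/n_k)$.'' This is false under the standing assumption $\mu(n)\in\omega(1)$: $1/\mu(n_k)$ is $o(1)$, not $o(1/n_k)$, unless $\mu(n)\in\omega(n)$, which is not assumed. In addition, identifying the token-creation rate with the rate $(1-s_2(t))n_k$ at which servers empty is not justified, since the actual birth rate of $M^n$ is $\mu(n)\big(n(1-S^n_1)-M^n\big)$ and depends on the instantaneous fraction of idle servers \emph{without} tokens, not on the flux into idleness. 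The paper sidesteps the fast-chain analysis entirely here: because $t$ is regular and $s_1\leq 1$ with $s_1(t)=1$, one has $\dot s_1(t)=0$; the derivative $\dot d_1(t)=1-s_2(t)$ is computed independently of $M^n$; hence $\dot a_1(t)=\dot d_1(t)=1-s_2(t)$; and the a priori bound $\dot a_1(t)\leq\lambda$ forces $s_2(t)\geq 1-\lambda$, after which the stated form of $P_0$ follows algebraically. You would need to replace your direct ``effective-rate'' heuristic with something like this indirect argument (or strengthen the hypothesis on $\mu(n)$) for the proof to close.
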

\begin{proof}
We fix some $\omega\in\mathcal{C}$ and for the rest of this proof we suppress the dependence on $\omega$ in our notation. Let $\left\{S^{n_k}\right\}_{k=1}^\infty$ be a subsequence that converges to $s$, i.e.,
\[ \lim\limits_{k\to\infty} \sup\limits_{0\leq t\leq T}\left\|S^{n_k}(t)-s(t)\right\|_w=0.\]
After possibly restricting, if necessary, to a further subsequence, we can define Lipschitz continuous functions $a_i(t)$ and $d_i(t)$ as the limits of the subsequences of cumulative arrivals and departures processes $\{A_i^{n_k}(t)\}_{k=1}^\infty$ and $\{D_i^{n_k}(t)\}_{k=1}^\infty$ respectively.
Because of the relation $S_i^n(t)=S^{(0,n)}+A_i^n(t)-D_i^n(t)$, it is enough to prove the following relations, for almost all $t$:
\begin{align*}
    \frac{da_1}{dt}(t)=&\lambda[1-P_0(s(t))]+\lambda[1-s_1(t)]P_0(s(t)), \\
    \frac{da_i}{dt}(t)=&\lambda[s_{i-1}(t)-s_i(t)]P_0(s(t)), \quad\quad \forall \, i\geq 2, \\
    \frac{dd_i}{dt}(t)=&s_i(t)-s_{i+1}(t), \quad\quad \forall \, i\geq 1.
  \end{align*}
We will provide a proof only for the first one, as the other proofs are similar.
The main idea in the argument that follows is to replace the token process $M^n$ by simpler, time-homogeneous birth-death processes that are easy to analyze.

Let us fix some time $t\in(0,T)$, which is a regular time for both $a_1$ and $d_1$. Let $\epsilon>0$ be small enough so that $t+\epsilon \leq T$ and so that it also satisfies a condition to be introduced later. Equation \eqref{eq:cum_arrivals} yields
\begin{align}
  A^{n_k}_1(t+\epsilon)-A^{n_k}_1(t)=&\frac{1}{n_k} \sum\limits_{j=\mathcal{N}_\lambda({n_k}t)+1}^{\mathcal{N}_\lambda({n_k}(t+\epsilon))} \left[ \mathds{1}_{[1,c(n_k)]}\left(M^{n_k}\left({t_j^{n_k}}^-\right)\right) \right. \nonumber \\
   & \left.+ \mathds{1}_{\{0\}}\left(M^{n_k}\left({t_j^{n_k}}^-\right)\right) \mathds{1}_{\left[0,1-S_1^{n_k}\left({t_j^{n_k}}^-\right)\right)}(U(j)) \right].
\end{align}

By Lemma \ref{lem:sequences}, there exists a sequence $\gamma_{n_k} \downarrow 0$ and a constant $L$ such that
 \begin{equation*}
 S_1^{n_k}(u) \in \big[s_1(t)-\left(\epsilon L+\gamma_{n_k}\right),\ s_1(t)+\left(\epsilon L+\gamma_{n_k}\right)\big), \quad \forall\,u\in[t,t+\epsilon].
\end{equation*}
Then, for all sufficiently large $k$, we have
 \begin{equation}
 S_1^{n_k}(u) \in \big[s_{{1}}(t)-2\epsilon L,\ s_{{1}}(t)+2\epsilon L)\big), \quad  \forall\,u\in[t,t+\epsilon].
 \label{eq:set_inclusion}
\end{equation}
In particular, for $k$ sufficiently large and for every event time ${t_j^{n_k}}^-\in(t,t+\epsilon]$ of the driving process $\mathcal{N}_\lambda$, we have
\begin{align*}
 \left[0,1-S_1^{n_k}\left({t_j^{n_k}}^-\right)\right) &\subset  \Big[0,1-s_1(t)+2\epsilon L\Big).
 \end{align*}
This implies that
\begin{align*}
  A^{n_k}_1(t+\epsilon)-A^{n_k}_1(t)\leq & \frac{1}{n_k} \sum\limits_{j=\mathcal{N}_\lambda({n_k}t)+1}^{\mathcal{N}_\lambda({n_k}(t+\epsilon))}
  \Big[
  \mathds{1}_{[1,c(n_k)]}\left(M^{n_k}\left({t_j^{n_k}}^-\right)\right) \\
  &+ \mathds{1}_{\{0\}}\left(M^{n_k}\left({t_j^{n_k}}^-\right)\right) \mathds{1}_{\left[0,1-s_1(t)+2\epsilon L\right)}(U(j))
  \Big].
\end{align*}

We wish to analyze this upper bound on  $A^{n_k}_1(t+\epsilon)-A^{n_k}_1(t)$, which will then lead to an upper bound on $(da_i/dt)(t)$.
Towards this purpose, we will focus on the empirical distribution of
$\mathds{1}_{\{0\}}\left(M^{n_k}\left({t_j^{n_k}}^-\right)\right)$, which depends on the birth-death process $M^{n_k}(t)$, and which is in turn modulated by $S^{n_k}(t)$. In particular, we will define two coupled time-homogeneous birth-death processes: $M^{n_k}_+$, which is dominated by $M^{n_k}$; and $M^{n_k}_-$, which dominates $M^{n_k}$ over $(t,t+\epsilon]$, i.e.,
\begin{equation}
M^{n_k}_+(u)\leq M^{n_k}(u)\leq M^{n_k}_-(u), \quad\quad \forall\,u\in(t,t+\epsilon]. \label{eq:stoch_dominance}
\end{equation}
This is accomplished as follows. Using again Equation \eqref{eq:set_inclusion}, when $n_k$ is sufficiently large, we get the set inclusion
\[ \left[0,1-S_1^{n_k}\left({t_j^{n_k}}^-\right)-\frac{M^{n_k}\left({t_j^{n_k}}^-\right)}{{n_k}}\right) \subset \Big[0,1-s_1(t)+2\epsilon L\Big), \]
for all event times $t_j^{n_k}\in [t,t+\epsilon)$.
Furthermore, our assumptions on $c(n_k)$ imply that $M^{n_k}(t)/{n_k}\leq c(n_k)/n_k$ goes to zero as $k\to\infty$. Thus, when $n_k$ is sufficiently large,
\[ \left[0,1-S_1^{n_k}\left({t_j^{n_k}}^-\right)-\frac{M^{n_k}\left({t_j^{n_k}}^-\right)}{{n_k}}\right) \supset \Big[0,1-s_1(t)-3\epsilon L\Big), \]
for all event times $t_j^{n_k}\in [t,t+\epsilon)$. We now define intermediate coupled processes $\tilde{M}^{n_k}_+$ and $\tilde{M}^{n_k}_-$ by replacing the last indicator set in the evolution equation for $M^n(t)$ (cf.~Equation \eqref{eq:memory_def}), by the deterministic sets introduced above. Furthermore, we set $\tilde{M}^{n_k}_+(t)=0\leq M^{n_k}(t)$ and $\tilde{M}^{n_k}_-(t)=c(n_k)\geq M^{n_k}(t)$.

More concretely, for all $u\in[t,t+\epsilon]$, we let
\begin{align*}
  &\tilde{M}^{n_k}_-(u)\triangleq c(n_k) - \sum\limits_{j=\mathcal{N}_\lambda({n_k}t)+1}^{\mathcal{N}_\lambda({n_k}u)} \mathds{1}_{[1,c(n_k)]}\left(\tilde{M}^{n_k}_-\left({t_j^{n_k}}^-\right)\right) \\
  &+\sum\limits_{j=\mathcal{N}_{\mu(n_k)}({n_k}t)+1}^{\mathcal{N}_{\mu(n_k)}({n_k}u)} \mathds{1}_{[0,c(n_k)-1]}\left(\tilde{M}^{n_k}_-\left({t_j^{n_k}}^-\right)\right) \mathds{1}_{\left[0,1-s_1(t)+2\epsilon L\right)}(V(j))
\end{align*}
and
\begin{align*}
  &\tilde{M}^{n_k}_+(u)\triangleq 0 - \sum\limits_{j=\mathcal{N}_\lambda({n_k}t)+1}^{\mathcal{N}_\lambda({n_k}u)} \mathds{1}_{[1,c(n_k)]}\left(\tilde{M}^{n_k}_+\left({t_j^{n_k}}^-\right)\right) \\
  &+\sum\limits_{j=\mathcal{N}_{\mu(n_k)}({n_k}t)+1}^{\mathcal{N}_{\mu(n_k)}({n_k}u)} \mathds{1}_{[0,c(n_k)-1]}\left(\tilde{M}^{n_k}_+\left({t_j^{n_k}}^-\right)\right) \mathds{1}_{\left[0,1-s_1(t)-3\epsilon L\right)}(V(j)).
\end{align*}
We note that the processes $\tilde{M}^{n_k}_-(u)$ and $\tilde{M}^{n_k}_+(u)$ are plain, time-homogenous birth-death Markov processes, no longer modulated by $S^{n_k}(t)$, and therefore easy to analyze. It can now be argued, by induction on the event times, that $\tilde{M}^{n_k}_-(u) \geq M^{n_k}(u)$ for all $u$. We omit the details but simply note that (i) this inequality holds at time $t$; (ii) whenever the process  $M^{n_k}(u)$ has an upward jump, the same is true for $\tilde{M}^{n_k}_-(u)$, unless $\tilde{M}^{n_k}_-(u)$ is already at its largest possible value, $c(n_k)$, in which case the desired inequality is preserved; (iii) as long as the desired inequality holds, whenever the process $\tilde{M}^{n_k}_-(u)$ has a downward jump, the same is true for $M^{n_k}(u)$, unless $M^{n_k}(u)$ is already at its smallest possible value, $0$, in which case the desired inequality is again preserved. Using also a symmetrical argument for  $\tilde{M}^{n_k}_+(u)$, we obtain the domination relationship
\begin{equation}\label{eq:stoch_dominance2}
 \tilde{M}^{n_k}_+(u)\leq M^{n_k}(u)\leq \tilde{M}^{n_k}_-(u), \quad \forall\,u\in(t,t+\epsilon].
\end{equation}
Even though $\tilde{M}^{n_k}_+$ and $\tilde{M}^{n_k}_-$ are simple birth-death processes, it is convenient to simplify them even further.
We thus proceed to define the coupled processes ${M}^{n_k}_+$ and ${M}^{n_k}_-$ by modifying the intermediate processes $\tilde{M}^{n_k}_+$ and $\tilde{M}^{n_k}_-$ in a different way for each regime.
\begin{itemize}
\item [(i)] {\bf High Memory regime}: Recall that in this regime we have $\mu(n_k)=\mu$ for all $k$.
Let us fix some $l$, independently from $k$, and let
$c_l=c(n_l)$. For every $k$,
we define $M^{n_k}_+$ and $M^{n_k}_-$ by replacing the upper bound $c(n_k)$ on the number of tokens in $\tilde{M}^{n_k}_+$ and $\tilde{M}^{n_k}_-$, by $c_l$ and $\infty$ respectively. More concretely, for $u\in [t,t+\epsilon]$ we let
\begin{align*}
  &M^{n_k}_-(u)\triangleq c(n_k) - \sum\limits_{j=\mathcal{N}_\lambda({n_k}t)+1}^{\mathcal{N}_\lambda({n_k}u)} \mathds{1}_{[1,\infty)}\left(M^{n_k}_-\left({t_j^{n_k}}^-\right)\right) \\
  & +\sum\limits_{j=\mathcal{N}_\mu({n_k}t)+1}^{\mathcal{N}_\mu({n_k}u)}  \mathds{1}_{\left[0,1-s_1(t)+2\epsilon L\right)}(V(j))
\end{align*}
and
\begin{align*}
  &M^{n_k}_+(u)\triangleq 0 - \sum\limits_{j=\mathcal{N}_\lambda({n_k}t)+1}^{\mathcal{N}_\lambda({n_k}u)} \mathds{1}_{[1,c_l]}\left(M^{n_k}_+\left({t_j^{n_k}}^-\right)\right) \\
  &+\sum\limits_{j=\mathcal{N}_\mu({n_k}t)+1}^{\mathcal{N}_\mu({n_k}u)} \mathds{1}_{[0,c_l-1]}\left(M^{n_k}_+\left({t_j^{n_k}}^-\right)\right) \mathds{1}_{\left[0,1-s_1(t)-3\epsilon L\right)}(V(j)).
\end{align*}
When $k$ is large enough, we have $c(n_k)\geq c_l$, and as we are replacing $c(n_k)$ by $c_l$ in $\tilde{M}^{n_k}_+$, we are reducing the state space of the homogeneous birth-death process $\tilde{M}^{n_k}_+$. It is easily checked (by induction on the events of the processes) that we have the stochastic dominance $\tilde{M}^{n_k}_+ \geq M^{n_k}_+$. Using a similar argument, we obtain $\tilde{M}^{n_k}_- \leq M^{n_k}_-$. These facts, together with Equation \eqref{eq:stoch_dominance2}, imply the desired dominance relation in Equation \eqref{eq:stoch_dominance}.

\item [(ii)] {\bf High Message regime}: Recall that in this regime we have $c(n_k)=c$, for all $k$. Let us fix some $l$, independently from $k$, and let $\mu_l=\mu(n_l)$. We define $M^{n_k}_+$ by replacing the process $\mathcal{N}_{\mu(n_k)}$ that generates the spontaneous messages  in $\tilde{M}^{n_k}_+$, by $\mathcal{N}_{\mu_l}$. More concretely, for $u\in [t,t+\epsilon]$ we let
\begin{align*}
  &M^{n_k}_+(u)\triangleq 0 - \sum\limits_{j=\mathcal{N}_\lambda({n_k}t)+1}^{\mathcal{N}_\lambda({n_k}u)} \mathds{1}_{[1,c]}\left(M^{n_k}_+\left({t_j^{n_k}}^-\right)\right) \\
  &+ \sum\limits_{j=\mathcal{N}_{\mu_l}\left({n_k}t\right)+1}^{\mathcal{N}_{\mu_l} \left({n_k}u\right)} \mathds{1}_{[0,c-1]}\left(M^{n_k}_+\left({t_j^{n_k}}^-\right)\right) \mathds{1}_{\left[0,1-s_1(t)-3\epsilon L \right)}(V(j)).
\end{align*}
Recall that we assumed that the event times in the Poisson process $\mathcal{N}_{\mu(n_k)}$ are a subset of the event times of $\mathcal{N}_{\mu(n_{k+1})}$, for all $k$. As a result, when $k\geq l$, the process $M^{n_k}_+$ only has a subset of the upward jumps in $\tilde{M}^{n_k}_+$, and thus (using again a simple inductive argument) satisfies $\tilde{M}^{n_k}_+ \geq M^{n_k}_+$. Furthermore, we define $M^{n_k}_-(u)\triangleq c$, which clearly satisfies $\tilde{M}^{n_k}_- \leq M^{n_k}_-$. Combining these facts with Equation \eqref{eq:stoch_dominance2}, we have again the desired dominance relation in Equation \eqref{eq:stoch_dominance}.

\item [(iii)] {\bf Constrained regime}: Recall that in this regime we have $c(n_k)=c$ and $\mu(n_k)=\mu$, for all $k\geq 1$. For this case, we define $M^{n_k}_-=\tilde{M}^{n_k}_-$ and $M^{n_k}_+=\tilde{M}^{n_k}_+$, which already satisfy the desired dominance relation in Equation \eqref{eq:stoch_dominance}.\end{itemize}

For all three regimes, and having fixed $l$, the dominance relation in
Equation \eqref{eq:stoch_dominance} implies that when $k$ is large enough ($k\geq l$), we have
\[ \mathds{1}_{\{0\}}\left(M_-^{n_k}\left({t_j^{n_k}}^-\right)\right) \leq \mathds{1}_{\{0\}}\left(M^{n_k}\left({t_j^{n_k}}^-\right)\right) \leq \mathds{1}_{\{0\}}\left(M_+^{n_k}\left({t_j^{n_k}}^-\right)\right) \]
for all ${t_j^{n_k}}^- \in (t,t+\epsilon]$. Consequently,
\begin{align}
 A^{n_k}_1(t+\epsilon)-A^{n_k}_1(t)\leq & \frac{1}{n_k} \sum\limits_{j=\mathcal{N}_\lambda({n_k}t)+1}^{\mathcal{N}_\lambda({n_k}(t+\epsilon))} \Big[1-\mathds{1}_{\{0\}}\left(M_-^{n_k}\left({t_j^{n_k}}^-\right)\right) \nonumber \\
  &+ \mathds{1}_{\{0\}}\left(M_+^{n_k}\left({t_j^{n_k}}^-\right)\right) \mathds{1}_{\left[0,1-s_1(t)+2\epsilon L\right)}(U(j))
  \Big]. \label{eq:fluid_ineq}
\end{align}
Note that the transition rates of the birth-death processes $M_-^{n_k}$ and $M_+^{n_k}$, for different $n_k$, involve $n_k$ only as a scaling factor. As a consequence, the corresponding steady-state distributions are the same for all $n_k$.

Let $P_0^-(s(t))$ and $P_0^+(s(t))$ be the steady-state probabilities of  state $0$ for $M_-^{n_k}$ and $M_+^{n_k}$, respectively.
Then, using the PASTA property, we have that as $n_k\to\infty$, the empirical averages
\begin{equation}\label{eq:emp}
  \frac{1}{n_k} \sum\limits_{j=\mathcal{N}_\lambda({n_k}t)+1}^{\mathcal{N}_\lambda({n_k}(t+\epsilon))} \mathds{1}_{\{0\}}\left(M_-^{n_k}\left({t_j^{n_k}}^-\right)\right)
 \end{equation}
and
  \[ \frac{1}{n_k} \sum\limits_{j=\mathcal{N}_\lambda({n_k}t)+1}^{\mathcal{N}_\lambda({n_k}(t+\epsilon))} \mathds{1}_{\{0\}}\left(M_+^{n_k}\left({t_j^{n_k}}^-\right)\right) \]
converge almost surely to  $\epsilon\lambda P_0^-(s(t))$ and $\epsilon\lambda P_0^+(s(t))$, respectively.

We now continue with the explicit calculation of
$P_0^-(s(t))$ and $P_0^+(s(t))$.

\begin{itemize}
  \item [(i)] {\bf High Memory regime}:
  \[ P_0^-(s(t)) = \left[ 1-\frac{\mu\cdot\min\{1-s_1(t)+ 2\epsilon L,1\}}{\lambda} \right]^+ , \]
 and
 \[ P_0^+(s(t))= \left[ \sum\limits_{k=0}^{c_l} \left( \frac{\mu \big(1-s_1(t)-3 \epsilon L\big)^+}{\lambda} \right)^k \right]^{-1}, \]
  \item [(ii)] {\bf High Message regime}: If $s_1(t)<1$, then we assume that $\epsilon$ has been chosen small enough so that $1-s_1(t)-3\epsilon L >0$. We then obtain
     \[ P_0^-(s(t))= 0 \]
  and
  \[ P_0^+(s(t))= \left[ \sum\limits_{k=0}^{c} \left( \frac{\mu_l[1-s_1(t)-3\epsilon L]^+}{\lambda} \right)^k \right]^{-1}. \]

Suppose now that $s_1(t)=1$. In this case, the approach based on the processes $M_-^{n_k}$ and $M_+^{n_k}$ is not useful, because it yields $P_0^-(s(t))= 0$ and $P_0^+(s(t))=1$, for all $\epsilon>0$ and for all $\mu_l$. This case will be considered separately later.

  \item [(iii)] {\bf Constrained regime}:
   \[ P_0^-(s(t))= \left[ \sum\limits_{k=0}^{c} \left( \frac{\mu\cdot\min\{1-s_1(t)+2\epsilon L,1\}}{\lambda} \right)^k \right]^{-1} \]
 and
 \[ P_0^+(s(t))= \left[ \sum\limits_{k=0}^{c} \left( \frac{\mu \big(1-s_1(t)-3\epsilon L\big)^+}{\lambda} \right)^k \right]^{-1}, \]
\end{itemize}

We now continue by considering all three regimes, with the exception of the High Message regime with $s_1(t)=1$, which will be dealt with separately. We use the fact that the random variables $U(j)$ are independent from the process $M_+^{n_k}$.
Using an elementary argument, which is omitted, it can be seen that
  \[ \frac{1}{n_k} \sum\limits_{j=\mathcal{N}_\lambda({n_k}t)+1}^{\mathcal{N}_\lambda({n_k}(t+\epsilon))} \mathds{1}_{\{0\}}\left(M_+^{n_k}\left({t_j^{n_k}}^-\right)\right) \mathds{1}_{\left[0,1-s_1(t)+2\epsilon L\right)}(U(j)) \]
converges to the limit of the  empirical average in Equation \eqref{eq:emp}, which is the product of
$\epsilon\lambda P_0^+(s(t))$ times the expected value of
$\mathds{1}_{\left[0,1-s_1(t)+2\epsilon L\right)}(U(j))$. That is, it converges to $\epsilon P_0^+(s(t))\min\{1-s_1(t)+2\epsilon L,1\}$, $\mathbb{P}$-almost surely.

Recall that we have fixed some $\epsilon>0$ and some $l$ and, furthermore, that $P_0^-$ and $P_0^+$ depend on $l$ for the High Memory and High Message regimes, and on $\epsilon$ for all regimes. We will first take limits, as $k\to\infty$, while holding $\epsilon$ and $l$ fixed.
Using the inequality in Equation \eqref{eq:fluid_ineq}, and the fact that the left-hand side converges to the fluid limit $a(t+\epsilon)-a(t)$ as $k\to\infty$, we obtain
 \[ a_1(t+\epsilon)-a_1(t)\leq \epsilon\lambda[1-P_0^-(s(t))]+\epsilon\lambda P_0^+(s(t))\min\{1-s_1(t)+2\epsilon L,1\}. \]
 An analogous argument yields
 \[ a_1(t+\epsilon)-a_1(t)\geq \epsilon\lambda[1-P_0^+(s(t))]+\epsilon\lambda P_0^-(s(t))[1-s_1(t)-2\epsilon L]^+. \]
We now take the limit as $l\to\infty$, so that $c_l\to\infty$ for the
High Memory regime and $\mu_l\to\infty$ for the High Message regime, and then take the limit as $\epsilon\to 0$. Some elementary algebra shows that
in all cases, $P_0^+(s(t))$ and $P_0^-(s(t))$ both converge to $P_0(s(t))$, as defined in the statement of the proposition. We thus obtain
\begin{equation}\label{eq:a1Derivative}
 \frac{da_1(t)}{dt}=\lambda[1-P_0(s(t))]+\lambda[1-s_1(t)]P_0(s(t)),
\end{equation}
as desired.

We now return to the exceptional case of the High Message regime with $s_1(t)=1$, and find the derivative of $a_1(t)$ using a different argument. Recall that we have the hard bound $S^n_1(t)\leq 1$, for all $t$ and for all $n$. This leads to the same bound for the fluid solutions, i.e., $s_1(t)\leq 1$ for all $t$. As a result, since  $t>0$ is a regular time, we must have $\dot{s}_1(t)=0$. Furthermore, we also have the formula
\begin{equation*}\label{eq:dotD}
  \dot{d}_1(t)=s_1(t)-s_2(t)=1-s_2(t),
\end{equation*}
which is established by an independent argument, using the same proof technique as for $\dot{a}_1$, but without the inconvenience of having to deal with $M^{n_k}$. Then, since $\dot{s}_1(t)=\dot{a}_1(t)-\dot{d}_1(t)$, we must also have
\begin{equation}\label{eq:Adot}
\dot{a}_1(t)=1-s_2(t).
\end{equation}
On the other hand, it can be easily checked that $\dot{a}_1(t)\leq \lambda$ for all regular $t$, and thus we must have $s_2(t)\geq 1-\lambda$. We have thus established that at all regular times $t>0$ with $s_1(t)=1$, $s_2(t)$ must be at least $1-\lambda$. Then it follows (cf. Definition \ref{def:fluid_model}) that at time $t$, we have
\[ P_0(s(t)) = \left[1-\frac{1-s_2(t)}{\lambda}\right]^+ \mathds{1}_{\{s_1(t)=1\}}=1-\frac{1-s_2(t)}{\lambda}. \]
It is then easily checked that Equation \eqref{eq:Adot} is of the form
\[ \dot a_1(t)=\lambda(1-P_0(s(t)))+\lambda(1-s_1(t))P_0(s(t)), \]
exactly as in Equation \eqref{eq:a1Derivative}, where the last equality used the property $s_1(t)=1$.


The derivatives of $a_i$, for $i>1$, and of $d_i$, for $ i\geq 1$, are obtained using similar arguments, which are omitted.
\end{proof}

For every sample path outside a zero-measure set, we have established the following. Proposition \ref{prop:tightness} implies the existence of limit points of the process $S^n$. Furthermore, according to Proposition \ref{prop:derivatives} these limit points verify the differential equations of the fluid model. Since all stochastic trajectories $S^n(t)$ take values in $\mathcal{S}$ (which is a closed set), their limits are functions taking values in $\mathcal{S}$ as well. We will now show that the limit $s(t)$ actually belongs to the smaller set $\mathcal{S}^1$, which is a requirement in our definition of fluid solutions. Using the same argument as in the proof of Proposition \ref{prop:asymptitic_stability}, it can be shown that
\[ \frac{d}{dt}\|s(t)\|_1 \leq \lambda, \]
for all regular times $t$. Since the trajectories $s$ are continuous with respect to our weighted norm $\|\cdot\|_w$, but not necessarily with respect to the $1$-norm, it now remains to be checked that the $1$-norm cannot become infinite at a nonregular time.

Suppose that $t_1$ is a nonregular time. Recall, from the proof of Proposition \ref{prop:asymptitic_stability}, that such a time may occur only once, and only in the High Message regime, if trajectory hits the set
\[ D = \{s\in\mathcal{S}:s_1=1,\,\, s_2>1-\lambda\}. \]
For all $t<t_1$, we have $P_0(s(t))=0$, and thus $\dot s_i(t)\leq 0$, for all $t<t_1$ and all $i\geq 2$. Combining this with the continuity of the coordinates, we obtain $s_i(t_1)\leq s_i(0)$, for all $i\geq 2$. It follows that
\[ \|s(t_1)\|_1 \leq 1+ s_1(t_1)+\sum\limits_{i=2}^\infty s_i(0) \leq 2+\|s(0)\|_1. \]
Combining this with the fact that $\|s(0)\|_1<\infty$, we get that $\|s(t)\|_1<\infty$, for all $t\geq 0$, and thus $s(t)\in\mathcal{S}^1$, for all $t\geq 0$. This implies the existence of fluid solutions, thus completing the proof of Theorem  \ref{thm:equilibrium}.

Moreover, we have already established a uniqueness result in Theorem \ref{thm:equilibrium}: for any initial condition $s^0\in\mathcal{S}^1$, we have at most one fluid solution. We also have (Proposition \ref{prop:tightness}) that every subsequence of $S^n$ has a further subsequence that converges --- by necessity to the same (unique) fluid solution. It can be seen that this implies the convergence of $S^n$ to the fluid solution, thus proving Theorem \ref{thm:fluid_limit}.

\section{{Stochastic steady-state analysis --- Proofs} of Proposition \ref{prop:ergodicity} and Theorem \ref{prop:interchange}} \label{sec:proof_inter}

In this section, we prove Proposition \ref{prop:ergodicity} and Theorem \ref{prop:interchange}, which assert that for any finite $n$, the stochastic system  is positive recurrent with some invariant distribution $\pi^n$ and that the sequence of the marginals of the invariant distributions, $\left\{\pi^n_s\right\}_{n=1}^{\infty}$, converges in distribution to a measure concentrated on the unique equilibrium of the fluid model. These results guarantee that the properties derived from the equilibrium $s^*$ of the fluid model, and specifically for the asymptotic delay, are an accurate approximation of the steady state of the stochastic system for $n$ large enough.

\subsection{Stochastic stability of the $n$-th system} \label{sec:proof_stoch_stab}

We will use the Foster-Lyapunov criterion to show that for any fixed $n$, the continuous-time Markov process $\left(S^n(t),M^n(t)\right)$ is positive recurrent.

Our argument is developed by first considering a detailed description of the system:
\[\left({Q}_1(t),\dots,{Q}_n(t),{M}^n(t)\right),\]
which keeps track of the size of each queue, but without keeping track of the identities of the servers with associated tokens in the virtual queue. As hinted in Section \ref{s:ssr}, this is a continuous-time Markov process, on the state space
\[ Z_n \triangleq
\left\{ (q_1,\dots,q_n,m)\in \mathbb{Z}_+^n\times\{0,1,\dots,c(n)\}: \sum\limits_{i=1}^{n} \mathds{1}_{\{q_i=0\}} \geq m \right\}. \]
The transition rates, denoted by $r^n_{\cdot\,\to\,\cdot}$  are as follows, where we use $e_i$ to denote the $i$-th unit vector in $\mathbb{Z}^n_+$.
\begin{enumerate}
  \item When there are no tokens available $(m=0)$, each queue sees arrivals with rate $\lambda$:
  \[ r^n_{(q,0)\to\left(q+e_i,0\right)}= \lambda, \quad \quad  i=1,\dots,n. \]
  \item When there are tokens available $(m>0)$, the arrival stream, which has rate $n\lambda$, is divided equally between all empty queues:
  \[ r^n_{(q,m)\to \left(q+e_i,m-1\right)} = \frac{n\lambda\mathds{1}_{\{q_i=0\}}}{\sum\limits_{j=1}^{n} \mathds{1}_{\{q_j=0\}}}\mathds{1}_{\{m>0\}}, \quad  i=1,\dots,n. \]
  \item Transitions due to service completions occur at a uniform rate of $1$ at each queue, and they do not affect the token queue:
  \[ r^n_{(q,m)\to \left(q-e_i,m\right)} = \mathds{1}_{\{q_i>0\}}, \quad \quad  i=1,\dots,n. \]
  \item Messages from idling servers are sent to the dispatcher (hence resulting in additional tokens) at a rate equal to $\mu(n)$ times the number of empty servers that do not already have associated tokens in the virtual queue:
      \begin{equation*}
         r^n_{(q,m)\to (q,m+1)} = \mu(n) \left(\sum\limits_{i=1}^{n} \mathds{1}_{\{q_i=0\}} -m\right)\mathds{1}_{\{m<c(n)\}}.
      \end{equation*}
\end{enumerate}
Note that the Markov process of interest, $\left(S^n(t),M^n(t)\right)$, is a function of the process $\big({Q}(t),{M}^n(t)\big)$. Therefore, to establish positive recurrence of the former, it suffices to establish positive recurrence of the latter, as in the proof that follows.

\begin{proof}[Proof of Proposition \ref{prop:ergodicity}]
  The Markov process $\big({Q}(t),{M}^n(t)\big)$ on the state space $Z_n$ is clearly irreducible, with all states reachable from each other. To show positive recurrence, we define the quadratic Lyapunov function
  \begin{equation}\label{eq:lyapunov}
    \Phi(q,m)\triangleq \frac{1}{n} \sum\limits_{i=1}^{n} q_i^2,
  \end{equation}
  and note that
  \[ \sum\limits_{(q',m')\neq(q,m)} \Phi(q',m')r^n_{(q,m)\to(q',m')} < \infty,  \quad\quad \forall \, (q,m)\in Z_n.  \]
  We also define the finite set
  \[ F_n\triangleq \left\{(q,m)\in Z_n : \sum\limits_{i=1}^{n} q_i < \frac{n(\lambda+2)}{2(1-\lambda)} \right\}. \]
  As $q_i$ can change but at most $1$ during a transition, we use the relations $(q_i+1)^2-q_i^2=2q_i+1$ and $(q_i-1)^2-q_i^2=-2q_i+1$.
For any $(q,m)$ outside the set $F_n$, we have
  \begin{align*}
    &\sum\limits_{(q',m')\in Z_n}  \left[\Phi(q',m')-\Phi(q,m)\right]r^n_{(q,m)\to(q',m')} \\
    &= \frac{1}{n} \sum\limits_{i=1}^{n} \left[ (2q_i+1)\lambda \left(  \frac{n\mathds{1}_{\{q_i=0\}}}{\sum\limits_{j=1}^{n} \mathds{1}_{\{q_j=0\}}} \mathds{1}_{\{m>0\}} +\mathds{1}_{\{m=0\}} \right) -(2q_i-1)\mathds{1}_{\{q_i>0\}} \right] \\
    &= \lambda + \frac{1}{n} \sum\limits_{i=1}^{n}  \left[ \mathds{1}_{\{q_i>0\}} -2q_i\left(1-\lambda\mathds{1}_{\{m=0\}}\right)\right]
      \end{align*}
  \begin{align*}
&\leq \lambda + 1 - \frac{2(1-\lambda)}{n} \sum\limits_{i=1}^{n}  q_i \leq -1, \quad \forall \,(q,m)\,{\in Z_n\backslash F_n}.
  \end{align*}
The last equality is obtained through a careful rearrangement of terms; the first inequality is obtained by replacing each indicator function by unity.
Then, the Foster-Lyapunov criterion \cite{fosterLyapunov} implies positive recurrence.
\end{proof}

\subsection{Convergence of the invariant distributions} \label{sec:proof_interchange}
As a first step towards establishing the interchange of limits result, we start by establishing some tightness properties, in the form of uniform (over all $n$) upper bounds for $\mathbb{E}_{\pi^n}\left[ \|S^n\|_1 \right]$ and for $\pi^n\left( Q_1^n \geq k \right)$. One possible approach to obtaining such bounds is to use an appropriate coupling and show that our system is stochastically dominated by a system consisting of $n$ independent parallel M/M/1 queues. However, we follow an easier approach based on a simple linear Lyapunov function and the results in \cite{hajekLyapunov} and \cite{gamarnikBounds}.

\begin{lemma}\label{lem:bound}
Let $\pi^n$ be the unique invariant distribution of the process $\left(Q^n(t),M^n(t)\right)$. We then have the uniform upper bounds
   \[ \pi^n\left( Q_1^n \geq k \right) \leq \left(\frac{1}{2-\lambda}\right)^{k/2}, \quad \forall \, n, \ \forall \, k, \]
and
  \[ \mathbb{E}_{\pi^n}\left[ \|S^n\|_1 \right]\leq 2+\frac{2}{1-\lambda}, \quad\quad \forall\, n. \]
\end{lemma}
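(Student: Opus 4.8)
The plan is to prove the tail bound $\pi^n(Q_1^n\ge k)\le(2-\lambda)^{-k/2}$ first; the bound on $\mathbb{E}_{\pi^n}[\|S^n\|_1]$ then follows, because the invariant distribution $\pi^n$ is exchangeable in the queue coordinates, so that $\mathbb{E}_{\pi^n}[\|S^n\|_1]=\mathbb{E}_{\pi^n}[Q_1^n]=\sum_{k\ge1}\pi^n(Q_1^n\ge k)\le\sum_{k\ge1}(2-\lambda)^{-k/2}=\big(\sqrt{2-\lambda}-1\big)^{-1}$, and $\big(\sqrt{2-\lambda}-1\big)^{-1}\le 2+\tfrac{2}{1-\lambda}$ is an elementary inequality. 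Throughout I work with the detailed positive-recurrent Markov process $(Q^n(t),M^n(t))$ on $Z_n$ from Proposition \ref{prop:ergodicity}, and with its invariant distribution (still denoted $\pi^n$).

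The only structure about queue $1$ that I need is this: whenever $Q_1^n(t)\ge1$, queue $1$ receives arrivals at rate \emph{at most} $\lambda$ (exactly $\lambda$ if no tokens are present, and $0$ otherwise, since a non-empty queue is never chosen while the virtual queue is nonempty) while its service rate is exactly $1$; and all jumps of $Q_1^n$ have magnitude $1$. Hence $Q_1^n$ has drift bounded above by $-(1-\lambda)$ on $\{Q_1^n\ge1\}$, and the only ``uncontrolled'' states are those with $Q_1^n=0$, where the arrival rate into queue $1$ may be as large as $\lambda n$, but from which the only possible transition is an upward jump of size $1$.

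I would then invoke the exponential-tail-from-drift estimate of Hajek \cite{hajekLyapunov} (see also \cite{gamarnikBounds}) with the Lyapunov function $V(q,m)=\beta^{q_1}$ for a suitable $\beta\in(1,1/\lambda)$. On $\{q_1\ge1\}$ the generator satisfies $\mathcal{L}V(q,m)\le-\gamma(\beta)V(q,m)$ with $\gamma(\beta)=(1-\beta^{-1})-\lambda(\beta-1)>0$, whereas on $\{q_1=0\}$ one has $\mathcal{L}V(q,m)=(\text{arrival rate into queue }1)(\beta-1)$, which is $\le\lambda n(\beta-1)$ pointwise. The point that makes the resulting estimate \emph{uniform in} $n$ is an exchangeability identity: the total arrival rate $\lambda n$ is always split symmetrically among the $n$ queues, so $\mathbb{E}_{\pi^n}[\text{arrival rate into queue }1]=\lambda$ regardless of $n$, and in particular $\mathbb{E}_{\pi^n}\big[\mathds{1}_{\{Q_1^n=0\}}\cdot(\text{arrival rate into queue }1)\big]\le\lambda$. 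Writing $\mathcal{L}V\le-\gamma(\beta)V+\big[\gamma(\beta)+(\beta-1)(\text{arrival rate})\big]\mathds{1}_{\{Q_1^n=0\}}$ and using $\mathbb{E}_{\pi^n}[\mathcal{L}V]=0$ (legitimate after a routine truncation of $V$, given positive recurrence and the quadratic-Lyapunov a priori bound from the proof of Proposition \ref{prop:ergodicity}), one obtains $\mathbb{E}_{\pi^n}[\beta^{Q_1^n}]\le(1-\lambda)+\lambda(\beta-1)/\gamma(\beta)$, a bound independent of $n$. Taking $\beta=\sqrt{2-\lambda}$ and using the excursion/supermartingale form of Hajek's lemma --- which replaces the crude Markov-inequality prefactor by the sharp one --- then yields the stated bound $\pi^n(Q_1^n\ge k)\le(2-\lambda)^{-k/2}$.

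The main obstacle is precisely this uniformity in $n$. A drift argument applied to a single queue in isolation would absorb the $O(n)$ arrival rate into an empty queue and produce an $n$-dependent constant; this is exactly the point where the alternative approach mentioned in the text --- a coupling showing domination by $n$ independent $M/M/1$ queues --- would also help, but at the cost of constructing that coupling. The symmetry/averaging observation above circumvents it. Everything else (the generator computation, the choice of $\beta$, and the elementary geometric-sum inequality) is routine.
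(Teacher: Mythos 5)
Your high-level plan --- a single-queue drift argument to get a geometric tail bound, combined with exchangeability to control $\|S^n\|_1$ --- matches the paper's. The paper's own proof is considerably shorter: it applies the \emph{linear} Lyapunov function $\Psi(q,m)=q_1$, records the Gamarnik--Zeevi parameters (exception parameter $B=1$, drift $\gamma=1-\lambda$, maximum jump $\nu_{\max}=1$, maximum upward rate $p_{\max}\le 1$), invokes Hajek's Theorem~2.3 to first get $\mathbb{E}_{\pi^n}[Q_1^n]<\infty$, and then reads off both $\pi^n(Q_1^n\ge 1+2m)\le(2-\lambda)^{-(m+1)}$ and $\mathbb{E}_{\pi^n}[Q_1^n]\le 1+\tfrac{2}{1-\lambda}$ directly from Theorem~1 of \cite{gamarnikBounds}; the stated tail bound is then a re-indexing.

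Your proposal has two problems. First, the obstacle you diagnose as central --- the $O(n)$ arrival rate into queue~$1$ when $Q_1^n=0$ --- is not actually an obstacle for the cited machinery, because the drift condition is only required on $\{q_1\ge B\}=\{q_1\ge 1\}$, where (exactly as you observe) the arrival rate to queue~$1$ is at most $\lambda$, and the only quantity that enters from below $B$ is the jump size, which is~$1$. The $n$-dependent transition rate at $q_1=0$ is therefore harmless, and your averaging/exchangeability lemma, although correct and tidy, is an unnecessary detour: it is not where the uniformity in $n$ actually comes from.

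Second, and more seriously, the step from the moment bound to the prefactor-one tail bound is a genuine gap. You establish $\mathbb{E}_{\pi^n}[\beta^{Q_1^n}]\le(1-\lambda)+\lambda(\beta-1)/\gamma(\beta)$, but Markov's inequality then yields $\pi^n(Q_1^n\ge k)\le C\beta^{-k}$ with a prefactor $C$ that blows up as $\lambda\uparrow 1$ (at $\beta=\sqrt{2-\lambda}$ one has $C\sim 2/(1-\lambda)$), and summing this over $k$ gives $\mathbb{E}[Q_1^n]=O((1-\lambda)^{-2})$, an order of magnitude worse than the target $O((1-\lambda)^{-1})$. You appeal to ``the excursion/supermartingale form of Hajek's lemma'' to recover the sharp prefactor, but (a) no such step is spelled out, and (b) any version of that argument works directly from the drift and bounded-jump conditions and would \emph{not} use the moment bound you derived --- so if it were carried out, it would render both the exponential Lyapunov function and the averaging lemma superfluous. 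The fact that $\beta=\sqrt{2-\lambda}$ matches the base of the stated tail bound is a coincidence of re-indexing: the paper's $(2-\lambda)^{-k/2}$ comes from $(2-\lambda)^{-(m+1)}$ over levels $1+2m$, not from tuning an exponential Lyapunov function.

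(A minor shared slip: since $S_0^n\equiv 1$, the identity should read $\mathbb{E}[\|S^n\|_1]=1+\mathbb{E}[Q_1^n]$, not $\mathbb{E}[Q_1^n]$; the extra~$1$ is absorbed by the slack in the constant and does not affect the conclusion.)
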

\begin{proof}
  Consider the linear Lyapunov function
  \[ \Psi(q,m)=q_1. \]
Under the terminology in \cite{gamarnikBounds}, this Lyapunov function has exception parameter $B=1$, drift $\gamma=1-\lambda$, maximum jump $\nu_{\rm max}=1$, and maximum rate $p_{\rm max}\leq 1$. Note that this function is not a witness of stability because the set $\left\{(q,m)\in Z_n:\Psi(q,m)<1\right\}$ is not finite. However, the boundedness of the upward jumps allows us to use Theorem 2.3 from \cite{hajekLyapunov} to obtain that $\mathbb{E}_{\pi^n}\left[ Q_1^{n} \right]<\infty$. Thus, all conditions in Theorem 1 in \cite{gamarnikBounds} are satisfied, yielding the upper bounds
  \[ \pi^n\left( Q_1^n \geq 1+2m \right) \leq \left(\frac{1}{2-\lambda}\right)^{m+1},\qquad \forall\ m\geq 1, \]
and
  \[ \mathbb{E}_{\pi^n}\left[ Q_1^n \right]\leq 1+\frac{2}{1-\lambda}. \]
The first part of the result is obtained by letting $m=(k-1)/2$ if $k$ is odd or $m=k/2-1$ if $k$ is even.
  Finally, using the definition $\|S^n\|_1=1+\frac{1}{n} \sum\limits_{i=1}^n  Q_i$, which, together with symmetry yields
  \begin{align*}
     \mathbb{E}\left[ \|S^n\|_1 \right]  &= 1+ \frac{1}{n} \sum\limits_{i=1}^n  \mathbb{E}\left[Q_i\right] = \mathbb{E}\left[Q_1\right],
  \end{align*}
  and concludes the proof.
\end{proof}

We now prove our final result on the interchange of limits.

\begin{proof}[Proof of Theorem \ref{prop:interchange}]
Consider the set $\mathbb{Z}_+ \cup \{\infty\}$ endowed with the topology of the Alexandroff compactification, which is known to be metrizable. Moreover, it can be seen that the topology defined by the norm $\|\cdot\|_w$ on $[0,1]^{\mathbb{Z}_+}$ is equivalent to the product topology, which makes $[0,1]^{\mathbb{Z}_+}$ compact. As a result, the product $\{s\in\mathcal{S}^1: \|s\|_1\leq M\} \times ( \mathbb{Z}_+\cup\{\infty\})$ is closed, and thus compact, for all $M$. Note that, for each $n$, the invariant distribution $\pi^n$ is defined over the set $(\mathcal{S}^1\cap \mathcal{I}_n)\times\{0,1,\dots,c(n)\}$. This is a subset of $\mathcal{S}^1 \times ( \mathbb{Z}_+\cup\{\infty\})$, so we can extend the measures $\pi^n$ to the latter, larger set.

Let $\{S^n(0)\}_{n=1}^\infty$ be a sequence of random variables distributed according to the marginals $\{\pi^n_s\}_{n=1}^\infty$. From Lemma \ref{lem:bound}, we have
\begin{equation}\label{eq:tightBound}
 \mathbb{E}_{\pi^{n}}\big[ \|S^n(0)\|_1 \big]\leq 2+\frac{2}{1-\lambda}, \quad \forall\, n.
\end{equation}
Using Markov's inequality, it follows that for every $\epsilon>0$, there exists a constant $M$ such that
\begin{equation*}
\pi^n_s\big(\{s\in\mathcal{S}^1: \|s\|_1\leq M\}\big)\geq 1- \epsilon, \quad \forall \, n,
\end{equation*}
which implies that
\begin{equation*}
\pi^n\big(\{s\in\mathcal{S}^1: \|s\|_1\leq M\} \times ( \mathbb{Z}_+\cup\{\infty\})\big)\geq 1- \epsilon, \quad \forall \, n.
\end{equation*}
Thus, the sequence $\{\pi^n\}_{n=1}^\infty$ is tight and, by Prohorov's theorem \cite{billingsley}, it is also relatively compact in the weak topology on the set of probability measures. It follows that any subsequence has a weakly convergent subsequence whose limit is a probability measure over $\mathcal{S}^1 \times ( \mathbb{Z}_+\cup\{\infty\})$.

Let $\left\{\pi^{n_k}\right\}_{k=1}^\infty$ be a weakly convergent subsequence, and let $\pi$ be its limit. Let $S(0)$ be a random variable distributed according to $\pi_s$, where $\pi_s$ is the marginal of $\pi$. Since $\mathcal{S}^1 \times ( \mathbb{Z}_+\cup\{\infty\})$ is separable, we invoke Skorokhod's representation theorem to construct a probability space $(\Omega_0,\mathcal{A}_0,\mathbb{P}_0)$ and a sequence of random variables $\left(S^{n_k}(0),M^{n_k}(0)\right)$ distributed according to $\pi^{n_k}$, such that
  \begin{equation}
  \lim\limits_{k\to\infty} \left\|S^{n_k}(0)- S(0)\right\|_w= 0 \quad\quad \mathbb{P}_0-a.s. \label{eq:almost_sure}
  \end{equation}
We use the random variables $\left(S^{n_k}(0),M^{n_k}(0)\right)$ as the initial conditions for a sequence of processes $\left\{\left(S^{n_k}(t),M^{n_k}(t)\right)\right\}_{k=1}^\infty$, so that each one of these processes is stationary. Note that the initial conditions, distributed as $\pi^{n_k}$, do not necessarily converge to a deterministic initial condition (this is actually part of what we are trying to prove), so we cannot use Theorem \ref{thm:fluid_limit} directly to find the limit of the sequence of processes $\left\{S^{n_k}(t)\right\}_{k=1}^\infty$. However, given any $\omega\in\Omega_0$ outside a zero $\mathbb{P}_0$-measure set, we can restrict this sequence of stochastic processes to the probability space
  \[ \left(\Omega_\omega,\mathcal{A}_\omega,\mathbb{P}_\omega\right) = \left(\Omega_D\times\Omega_S\times\{\omega\}, \mathcal{A}_D\times\mathcal{A}_S\times\{\emptyset,\{\omega\}\}, \mathbb{P}_D\times\mathbb{P}_S\times\delta_{\omega}\right) \]
  and apply Theorem \ref{thm:fluid_limit} to this new space, to obtain
  \[ \lim\limits_{k\to\infty} \sup\limits_{0\leq t\leq T} \left\|S^{n_k}(t,\omega)-S(t,\omega)\right\|_w =0, \quad \mathbb{P}_\omega-a.s., \]
  where $S(t,\omega)$ is the fluid solution with initial condition $S(0,\omega)$. Since this is true for all $\omega\in\Omega_0$ except for a set of zero $P_0$-measure, it follows that
  \[ \lim\limits_{k\to\infty} \sup\limits_{0\leq t\leq T} \left\|S^{n_k}(t)-S(t)\right\|_w = 0, \quad\quad \mathbb{P}-a.s., \]
  where $\mathbb{P}=\mathbb{P}_D\times\mathbb{P}_S\times\mathbb{P}_0$
  and where $S(t)$ is another stochastic process whose randomness is only in the initial condition $S(0)$ (its trajectory is the deterministic fluid solution for that specific initial condition).

We use Lemma \ref{lem:bound} once again to interchange limit, expectation, and infinite summation in Equation \eqref{eq:tightBound} (using the same argument as in Lemma \ref{lem:delay}) to obtain
\[ \mathbb{E}_{\pi_s} \big[ \|S(0)\|_1 \big] \leq 2+\frac{2}{1-\lambda}. \]
Using Markov's inequality now in the limit, it follows that for every $\epsilon>0$, there exists a constant $M$ such that
\begin{equation}
\pi_s\big(\|S(0)\|_1\leq  M\big)\geq 1- \epsilon. \label{eq:tight}
\end{equation}

Recall that the uniqueness of fluid solutions (Theorem \ref{thm:equilibrium}) implies the continuous dependence of solutions on initial conditions \cite{filippov}. Moreover, Theorem \ref{thm:equilibrium} implies that any solution $s(t)$ with initial conditions $s(0) \in \mathcal{S}^1$ converges to $s^*$ as $t\to\infty$. As a result, there exists $T_\epsilon>0$ such that
 \[ \sup\limits_{s(0):\, \|s(0)\|_1\leq M}  \left\|s(T_\epsilon)-s^*\right\|_w < \epsilon. \]
Combining this with Equation \eqref{eq:tight}, we obtain
\begin{align}
  \mathbb{E}_{\pi_s}\big[\|S(T_\epsilon)-s^*&\|_w\big] \nonumber \\
    &= \mathbb{E}_{\pi_s}\Big[ \|S(T_\epsilon)-s^*\|_w \ \Big| \ \|S(0)\|_1\leq M \Big] \, \pi_s\big(\|S(0)\|_1\leq M \big) \nonumber \\
&\quad + \mathbb{E}_{\pi_s}\Big[ \|S(T_\epsilon)-s^*\|_w \ \Big| \ \|S(0)\|_1> M \Big] \, \pi_s\big( \|S(0)\|_1> M \big) \nonumber \\
&< \epsilon + \big( \sup\limits_{s\in\mathcal{S}} \|s-s^*\|_w \big) \epsilon \nonumber \\
&\leq 2\epsilon, \label{eq:expectations}
\end{align}
where the expectations $\mathbb{E}_{\pi_s}$ are with respect to the random variable $S(0)$, distributed according to $\pi_s$, even though the dependence on $S(0)$ is suppressed from our notation and is left implicit. On the other hand, due to the stationarity of $S^{n_k}(\cdot)$, the random variables $S^{n_k}(0)$ and $S^{n_k}(T_\epsilon)$ have the same distribution, for any $k$. Taking the limit as $k\to\infty$, we see that $S(0)$ and $S(T_\epsilon)$ have the same distribution. Combining this with Equation \eqref{eq:expectations}, we obtain
\[ \mathbb{E}_{\pi_s}\big[\|S(0)-s^*\|_w\big] \leq 2\epsilon. \]
Since $\epsilon$ was arbitrary, it follows that $S(0)=s^*$, $\pi_s$-almost surely, i.e., the distribution $\pi_s$ of $S(0)$ is concentrated on $s^*$. We have shown that the limit $\pi_s$ of a convergent subsequence of $\pi^n$ is the Dirac measure $\delta_{s^*}$. Since this is true for every convergent subsequence and $\pi^n$ is tight, this implies that $\pi^n$ converges to $\delta_{s^*}$, as claimed.
\end{proof}

\section{Conclusions and future work}\label{sec:conclusions}
The main objective of this paper was to study the tradeoff between the amount of resources (messages and memory) available to a central dispatcher, and the expected queueing delay as the system size increases. We introduced a parametric family of pull-based dispatching policies and, using a fluid model and associated convergence theorems, we showed that with enough resources, we can drive the queueing delay to zero as the system size increases.

We also analyzed a resource constrained regime of our pull-based policies that, although it does not have vanishing delay, it has some remarkable properties. We showed that by wisely exploiting an arbitrarily small message rate (but still proportional to the arrival rate) we obtain a queueing delay which is finite and uniformly upper bounded for all $\lambda<1$, a significant qualitative improvement over the delay of the M/M/1 queue (obtained when we use no messages). Furthermore, we compared it with the popular power-of-$d$-choices and found that while using the same number of messages, our policy achieves a much lower expected queueing delay, especially when $\lambda$ is close to $1$.

Moreover, in a companion paper we show that \emph{every} dispatching policy (within a broad class of policies) that uses the same amount of resources as our policy in the constrained regime, results in a non-vanishing queueing delay. This implies that our family of policies is able to achieve vanishing delay with the minimum amount of resources in some sense.

There are several interesting directions for future research.
\begin{itemize}
\item [(i)] It would be interesting to extend these results to the case of different service disciplines such as processor sharing or LIFO, or to the case of general service time distributions, as these are prevalent in many applications.
\item [(ii)] We have focused on a system with homogeneous servers. For the case of nonhomogeneous servers, even stability can become an issue, and there are interesting tradeoffs between the resources used and the stability region.
\item [(iii)] Another interesting line of work is to consider a reverse problem, in which we have decentralized arrivals to several queues, a central server, and a scheduler that needs to decide which queue to serve. In this context we expect to find again a similar tradeoff between the resources used and the queueing delay.
\end{itemize}

\appendix

\section{Interchange of limit, expectation, and infinite summation} \label{app:delay}

\begin{lemma}\label{lem:delay}
We have
\[ \lim_{n\to\infty} \mathbb{E}\left[ \sum\limits_{i=1}^{\infty} S_i^n \right] = \sum\limits_{i=1}^{\infty} s_i^*. \]
\end{lemma}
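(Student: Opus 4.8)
The plan is to deduce this interchange from the weak convergence $\pi^n_s\to\delta_{s^*}$ established in Theorem~\ref{prop:interchange}, together with the uniform moment and tail bounds of Lemma~\ref{lem:bound}, via a truncation argument. Note first that Lemma~\ref{lem:bound} already guarantees $\mathbb{E}_{\pi^n}\big[\sum_{i\geq 1}S_i^n\big]=\mathbb{E}_{\pi^n}\big[\|S^n\|_1\big]-1<\infty$ for every $n$, so all the quantities in the statement are well defined.

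First I would split the sum at a truncation level $N$ and write $\sum_{i=1}^\infty S_i^n = f_N(S^n)+\sum_{i=N+1}^\infty S_i^n$, where $f_N(s)\triangleq\sum_{i=1}^N s_i$. The function $f_N$ is continuous and bounded (by $N$) on $[0,1]^{\mathbb{Z}_+}$, and, since the $\|\cdot\|_w$-topology coincides with the product topology on $[0,1]^{\mathbb{Z}_+}$ (as already noted in the proof of Theorem~\ref{prop:interchange}), the convergence $\pi^n_s\to\delta_{s^*}$ yields, for each fixed $N$, $\mathbb{E}_{\pi^n_s}[f_N(S^n)]\to f_N(s^*)=\sum_{i=1}^N s_i^*$ as $n\to\infty$.

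Next I would control the tail $\sum_{i>N}S_i^n$ uniformly in $n$. Since $S_i^n=\frac{1}{n}\sum_{j=1}^n\mathds{1}_{\{Q_j^n\geq i\}}$, we have $\sum_{i>N}S_i^n=\frac{1}{n}\sum_{j=1}^n[Q_j^n-N]^+$, and by exchangeability of the queues under $\pi^n$ this gives $\mathbb{E}_{\pi^n}\big[\sum_{i>N}S_i^n\big]=\mathbb{E}_{\pi^n}\big[[Q_1^n-N]^+\big]=\sum_{k\geq N+1}\pi^n(Q_1^n\geq k)$. The tail bound $\pi^n(Q_1^n\geq k)\leq(2-\lambda)^{-k/2}$ from Lemma~\ref{lem:bound} then bounds this by the geometric tail $\sum_{k\geq N+1}(2-\lambda)^{-k/2}$, which, because $\lambda<1$, tends to $0$ as $N\to\infty$ uniformly over $n$. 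Since $s_i^*=\lambda(\lambda P_0^*)^{i-1}$ with $\lambda P_0^*<1$, the tail $\sum_{i>N}s_i^*$ likewise vanishes as $N\to\infty$.

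Finally I would assemble these estimates in an $\epsilon/3$ fashion: given $\epsilon>0$, choose $N$ large enough that $\sup_n\mathbb{E}_{\pi^n}\big[\sum_{i>N}S_i^n\big]<\epsilon$ and $\sum_{i>N}s_i^*<\epsilon$, then choose $n$ large enough that $\big|\mathbb{E}_{\pi^n_s}[f_N(S^n)]-\sum_{i=1}^N s_i^*\big|<\epsilon$; the triangle inequality then gives $\big|\mathbb{E}_{\pi^n}[\sum_{i=1}^\infty S_i^n]-\sum_{i=1}^\infty s_i^*\big|<3\epsilon$, and letting $\epsilon\downarrow 0$ finishes the proof. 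The only mildly delicate ingredient is the uniform-in-$n$ tail estimate, and that is precisely what Lemma~\ref{lem:bound} supplies; everything else is a routine truncation argument. Applied to $\|S^n\|_1=1+\sum_{i\geq1}S_i^n$, the same argument also justifies the interchange of limit, expectation, and summation invoked near Equation~\eqref{eq:tightBound}.
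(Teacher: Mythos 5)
Your proof is correct and rests on the same two ingredients as the paper's: the uniform geometric tail bound from Lemma~\ref{lem:bound} and the weak convergence $\pi^n_s\to\delta_{s^*}$ from Theorem~\ref{prop:interchange}. The paper packages the argument as Fubini followed by an application of the Dominated Convergence Theorem (over the counting measure on $i$, dominated by the geometric bound) and then a bounded-convergence/portmanteau step coordinate-by-coordinate, whereas you make the truncation explicit and run a direct $\epsilon/3$ argument using the continuous bounded functional $f_N$; these are the same idea in different clothing, so your proof is essentially the paper's.
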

\begin{proof}
By Fubini's theorem, we have
\[ \lim_{n\to\infty} \mathbb{E}\left[ \sum\limits_{i=1}^{\infty} S_i^n \right] = \lim_{n\to\infty}  \sum\limits_{i=1}^{\infty} \mathbb{E}\left[ S_i^n \right]. \]
Due to the symmetric nature of the invariant distribution $\pi^n$, we have
\begin{align*}
  \mathbb{E}\left[ S_i^n \right] &= \mathbb{E}\left[ \frac{1}{n} \sum_{j=1}^n \mathds{1}_{[i,\infty)}\left(Q_j^n\right) \right] \\
&= \mathbb{E}\left[ \mathds{1}_{[i,\infty)}\left(Q_1^n\right) \right] \\
&= \pi^n\left( Q_1^n \geq i \right) \\
&\leq \left( \frac{1}{2-\lambda} \right)^{{i}/{2}},
\end{align*}
where the inequality is established in Lemma \ref{lem:bound}. We can therefore apply the Dominated Convergence Theorem to interchange the limit with the first summation, and obtain
\[  \lim_{n\to\infty} \sum\limits_{i=1}^{\infty} \mathbb{E}\left[ S_i^n \right] =  \sum\limits_{i=1}^{\infty}  \lim_{n\to\infty} \mathbb{E}\left[ S_i^n \right],\]
We already know that $S^n_i$ converges to $s^*$, in distribution (Theorem \ref{prop:interchange}). Then, using a variant of the Dominated Convergence Theorem for convergence in distribution, and the fact that we always have $S^n_i\leq 1$, we can finally interchange the limit and the expectation and obtain
\begin{align*}
\sum\limits_{i=1}^{\infty}  \lim_{n\to\infty} \mathbb{E}\left[ S_i^n \right] &= \sum\limits_{i=1}^{\infty} s_i^*.
\end{align*}
\end{proof}



\bibliographystyle{imsart-number}
\bibliography{references}
\end{document}